\newsavebox\CBox
\def\textBF#1{\sbox\CBox{#1}\resizebox{\wd\CBox}{\ht\CBox}{\textbf{#1}}}
\crefname{hypothesis}{Hypothesis}{Hypotheses}
\crefname{dataset}{Dataset}{Datasets}
\crefname{subsection}{section}{sections}
\title{A Riemannian exponential augmented Lagrangian method for computing the projection robust Wasserstein distance}
\author{Bo Jiang \thanks{Key Laboratory for NSLSCS of Jiangsu Province, School of Mathematical Sciences, Nanjing
Normal University, Nanjing, China
  (\email{jiangbo@njnu.edu.cn}).} 
  \and 
   Ya-Feng Liu \thanks{Corresponding author. State  Key  Laboratory of  Scientific and  Engineering  Computing,  Institute of  Computational Mathematics and Scientific/Engineering Computing, Academy of Mathematics and Systems Science, Chinese Academy of Sciences, Beijing, China (\email{yafliu@lsec.cc.ac.cn}).}
}
\DeclareMathOperator{\Diag}{Diag}
\DeclareMathOperator*{\argmin}{argmin}
\DeclareMathOperator{\argmax}{argmax}
\DeclareMathOperator{\grad}{grad}
\newcommand{\be}{\begin{equation}}
\newcommand{\ee}{\end{equation}}
\newcommand{\1}{\mathbf{1}}
\newcommand{\x}{\mathbf{x}}
\newcommand{\Rbb}{\mathbb{R}}
\newcommand{\Kcal}{\mathcal{K}}
\newcommand{\Gcal}{\mathcal{S}}
\newcommand{\Mcal}{\mathcal{M}}
\newcommand{\Tcal}{\mathrm{T}}
\newcommand{\Acal}{\mathcal{A}}
\newcommand{\Lcal}{\mathcal{L}}
\newcommand{\Rcal}{\mathcal{R}}
\newcommand{\Pcal}{\mathcal{P}}
\newcommand{\Fsf}{\mathsf{F}}
\newcommand{\KL}{\mathsf{KL}}
\newcommand{\Tsf}{\mathsf{T}}
\newcommand{\err}{\mathsf{e}}
\newcommand{\st}{\mathrm{s.t.}}
\newcommand{\nn}{\nonumber}
\newcommand{\rev}[1]{{\color{black}{#1}}}
\newcommand{\proj}{\mathsf{Proj}}
\newcommand{\sign}{\mathrm{sign}}
\newcommand{\var}{\mathrm{var}}
\newcommand{\Retr}{\mathsf{Retr}}
\newcommand{\iprod}[2]{ \left\langle #1, #2 \right\rangle}
\newcommand{\iprods}[2]{ \langle #1, #2 \rangle}
\newcommand{\Round}{\texttt{Round}}
\newcommand{\Ocal}{\mathcal{O}}
\newcommand{\iRBBS}{\texttt{iRBBS}}
\newcommand{\RABCD}{\texttt{RABCD}}
\newcommand{\RAGAS}{\texttt{RAGAS}}
\newcommand{\RAGASv}{\texttt{R(A)GAS}}
\newcommand{\RABCDv}{\texttt{R(A)BCD}}
\newcommand{\RBCD}{\texttt{RBCD}}
\newcommand{\RGAS}{\texttt{RGAS}}
\newcommand{\ReALM}{\texttt{ReALM}}
\newcommand{\iRGD}{\texttt{iRGD}}
\begin{document}

\maketitle

\begin{abstract}
Projecting the  distance measures onto a low-dimensional space is an efficient way of mitigating the curse of dimensionality in the classical Wasserstein distance using optimal transport. The obtained maximized distance is referred to as projection robust Wasserstein (PRW) distance. In this paper, we equivalently reformulate the computation of the PRW distance as an optimization problem over the Cartesian product of the Stiefel manifold and the Euclidean space with additional nonlinear inequality  constraints.  We propose a Riemannian exponential augmented Lagrangian method (\ReALM) with a global convergence guarantee to solve this problem.  Compared with the existing approaches, \ReALM~can potentially avoid too small penalty parameters.
Moreover, we propose a framework of inexact Riemannian gradient descent methods to solve the subproblems in \ReALM~efficiently.  In particular, by using the special structure of the subproblem, we give a practical algorithm named as the inexact Riemannian Barzilai-Borwein method with Sinkhorn iteration (\iRBBS). The remarkable features of \iRBBS~lie in that it performs a flexible number of Sinkhorn iterations to compute an inexact gradient with respect to the projection matrix of the problem and adopts the Barzilai-Borwein stepsize based on the inexact gradient information to improve the performance.  We show that \iRBBS~can return an $\epsilon$-stationary point of the original PRW distance problem within $\Ocal(\epsilon^{-3})$ iterations. Extensive numerical results on synthetic and real datasets demonstrate that our proposed \ReALM~as well as \iRBBS~outperform the state-of-the-art solvers for computing the PRW distance.
\end{abstract}

\begin{keywords}
Barzilai–Borwein method, exponential augmented Lagrangian,   inexact gradient, Stiefel manifold,  Sinkhorn  iteration, Wasserstein distance
 \end{keywords}

\begin{AMS}
	65K10, 90C26, 90C47
\end{AMS}

\section{Introduction}
The optimal transport (OT) problem has wide applications in machine learning, data sciences, and image sciences; see  \cite{peyre2019computational} and the references therein for more details. However, its direct application in machine learning may encounter the curse of dimensionality issue since the sample complexity of approximating the Wasserstein distance can grow exponentially in dimension \cite{fournier2015rate,weed2019sharp}. To resolve this issue, by making an important extension to the sliced Wasserstein distance,  Paty and  Cuture \cite{paty2019subspace} and Niles-Wee and Rigollet \cite{niles2019estimation}  proposed to project the distributions to a $k$-dimensional subspace that maximizes the Wasserstein distance, which can reduce the sample complexity and overcome the issue of the curse of dimensionality \cite{niles2019estimation,lin2021projection}. 

In this paper, we focus on the discrete probability measures case.  For $\{x_1, \ldots, x_n\}$\\$\subset \Rbb^d$ and $\{y_1, \ldots, y_n\} \subset \Rbb^d$,  define $\rev{M_{ij}} = (x_i - y_j)(x_i - y_j)^{\Tsf}$ for each $(i,j)$. Let  $\1 \in \Rbb^n$ be the all-one vector and $\delta_x$ be the Dirac delta function at $x$. Consider two discrete probability measures $\mu_n = \sum_{i = 1}^n r_i \delta_{x_i}$ and $\nu_n = \sum_{i=1}^n c_i \delta_{y_i}$,  where  $r,c\in \Delta^n:= \{z\in\Rbb^n \mid \1^\Tsf z= 1, z > 0\}$.   For the integer $1 \leq k \leq d$, the  $k$-dimensional projection robust Wasserstein  (PRW) distance  between $\mu_n$ and $\nu_n$ is defined as   \cite{paty2019subspace} 
\be \label{equ:PRW}
\Pcal_k^2(\mu_n,\nu_n)= \max_{U \in \Gcal} \min_{\pi \in \Pi(r, c)} f(\pi, U),
\ee
where $f(\pi, U)= \iprod{\pi}{C(U)}$  with $C(U)\in \Rbb^{n \times n}$ and $[C(U)]_{ij} = \iprod{M_{ij}}{UU^{\Tsf}}$. Throughout this paper, $\Gcal = \{U \in \Rbb^{d\times k}\mid U^{\Tsf} U = I_k\}$ is known as the Stiefel manifold with $I_k$ being the $k\times k$ identity matrix and  $\Pi(r, c) = \{\pi \in \Rbb^{n \times n} \mid \pi \1 = r, \pi^{\Tsf} \1 = c, \pi \geq 0\}$. Problem \eqref{equ:PRW} is a nonconcave-convex max-min problem over the Stiefel manifold\footnote{This problem can also be seen as a max-min problem over the Grassmann manifold, which can be viewed as a  quotient manifold of $\Gcal$; see \cite{edelman1998geometry} for more details.}, which makes it very challenging to solve.
\subsection{Related works}
To compute the PRW distance, \cite{paty2019subspace} considered  a convex relaxation (but without the theoretical guarantee on the duality gap), wherein an OT
or entropy-regularized OT subproblem with dimension $n$ needs to be solved exactly in each step.  Very recently, 
 Lin et al. \cite{lin2020projection}  viewed problem \eqref{equ:PRW}  as a single-level optimization problem over the Stiefel manifold: 
\be \label{equ:PRW:2}
\max_{U \in \Gcal}\,  p(U),
\ee
where $p(U) = \min_{\pi \in \Pi(r, c)} f(\pi, U)$.  
Observe that $p(U)$ is a weakly convex \cite[Lemma 2.2]{lin2020projection} but nonsmooth function. Then it is possible to use the Riemannian subgradient method developed in \cite{li2021weakly} to solve \eqref{equ:PRW:2}. However, this may still be difficult since computing a  subgradient of  $p(\cdot)$ at $U$ needs to solve an OT problem exactly.  Instead of  solving  \eqref{equ:PRW:2} directly,  Lin et al. \cite{lin2020projection}  considered solving the following regularization problem with a  small regularization parameter $\eta$: 
\be \label{equ:PRW:reg}
\max_{U \in \Gcal}\, p_{\eta}(U),
\ee
where $p_{\eta}(U) = \min_{\pi \in \Pi(r, c)} f(\pi, U) - \eta H(\pi)$,
in which $H(\pi) =  - \sum_{ij} \pi_{ij} \log \pi_{ij}$ is the entropy function. 
To solve problem \eqref{equ:PRW:reg}, Lin et al. \cite{lin2020projection}  proposed a Riemannian (adaptive) gradient ascent with Sinkhorn (\RAGASv) algorithm,  which can be understood as a Riemannian gradient ascent method with an inexact gradient at a fixed inexactness level. 
They showed that  \RAGASv~can return an $\epsilon$-stationary point (see Definition 2.7 in \cite{lin2020projection}) of  PRW \eqref{equ:PRW} in $\Ocal(\epsilon^{-4})$ iterations if $\eta = \Ocal(\epsilon)$ in \eqref{equ:PRW:reg}. However, in each iteration, \RAGASv~needs to calculate $\nabla p_{\eta}(U)$ via solving a regularized OT problem in relatively high precision, which results in a high computational cost.  

 To further reduce the complexity of \RAGASv, rather than solving \eqref{equ:PRW:reg} directly, Huang et al. \cite{huang2021riemannian} focused on an equivalent ``min'' formulation of \eqref{equ:PRW:reg}  
 \be \label{prob:PRW:max}
\min_{U\in\Gcal, \alpha, \beta \in \Rbb^n}  h_{\eta}(\alpha,\beta,U)
\ee 
 via replacing   $p_{\eta}(U)$ by its dual, that is  
 $p_{\eta}(U) = \max_{\alpha,\beta\in \Rbb^n} - h_{\eta}(\alpha,\beta,U),$
where 
\[
h_{\eta}(\alpha,\beta,U) =  r^{\Tsf} \alpha +  c^{\Tsf} \beta + \eta \log \sum\nolimits_{ij} \exp\left( -\frac{\alpha_i + \beta_j +  \langle M_{ij}, UU^{\Tsf}\rangle}{\eta}\right).
\]
Note that the last term in $h_{\eta}(\alpha,\beta, U)$ is also known as the log-exponential aggregation function \cite{li1991aggregate}.
Huang et al.  \cite{huang2021riemannian} proposed two efficient algorithms, named Riemannian (adaptive) block coordinate descent (\RABCDv) algorithms to solve \eqref{prob:PRW:max}.  In their algorithms, $\alpha$ and $\beta$ are updated by one Sinkhorn iteration, while $U$ is updated by a Riemannian gradient descent step with a fixed stepsize. By choosing $\eta = \Ocal(\epsilon)$ in \eqref{prob:PRW:max},  
they showed that the whole  iteration complexity of \RABCDv~to attain an $\epsilon$-stationary point (see \cite[Definition 4.1]{huang2021riemannian} wherein $\epsilon_1=\epsilon_2 = \epsilon$) of PRW \eqref{equ:PRW}  is reduced to $\Ocal(\epsilon^{-3})$,
which significantly improves the complexity of \RAGASv~\cite{huang2021riemannian}. 

However, as pointed out in \cite[Remark 6.1]{huang2021riemannian}, there are two main issues of \RABCDv\\and \RAGASv.   
First, the algorithms are sensitive to the choice of parameter $\eta$. More specifically,  to compute a solution with relatively high \rev{quality}, $\eta$ has to be chosen to be small, which may cause numerical instability. Second, the performance of the algorithms is sensitive to the stepsizes in updating $U$. Hence,  to achieve a better performance, one has to spend some efforts to tune the stepsizes carefully. Resolving the two main issues of \RABCDv~demands some novel approach from both theoretical and computational points of view, and this is the focus of our paper. 

To handle the first issue mentioned above,  we first observe that the approach proposed by Huang et al. \cite{huang2021riemannian} can be understood as a Riemannian exponential penalty method applied to an equivalent formulation \eqref{prob:manifold:nlp:2}  of \eqref{equ:PRW:2}; see \cref{remark:relation:Huang} later.  
On the other hand, the exponential augmented Lagrangian methods (ALM), which are usually more stable than the exponential penalty type of methods,   have been well-studied \cite{bertsekas2014constrained,tseng1993convergence}  and widely used to solve various problems, such as optimal transport \cite{yang2022bregman}, semidefinite programming \cite{doljansky1999interior}, equilibrium problems \cite{torrealba2020exponential}, etc.  Hence, {\it our idea for resolving the first issue is to extend the exponential ALM from the Euclidean case to the Riemannian case.}  It should be remarked that for the fixed subspace $U$ case, such an idea has been adopted in computing the standard OT problem, 
wherein a proximal point-based entropy subproblem with a positive constant $\eta$ needs to be solved inexactly in each iteration; see  \cite{xie2020fast,yang2022bregman} for more details.   

 Problem \eqref{prob:PRW:max}  can also be viewed as a single-level optimization over the Stiefel manifold by letting $h_{\eta}(U)= \min_{\alpha,\beta\in \Rbb^n} h_{\eta}(\alpha,\beta,U)$.  Our idea for resolving the second issue lies in using an inexact Riemannian gradient descent method to solve $\min_{U \in \Gcal} h_{\eta}(U)$, wherein the Riemannian gradient $\grad h_{\eta}(U)$ is computed 
 inexactly at an adaptive inexactness level.  Although the inexact gradient descent method has been well explored in the Euclidean case \cite{carter1991global,devolder2014first,berahas2021global}, to our best knowledge,  there are no results on how to choose the stepsize adaptively for general nonlinear objective functions. One exception is  \cite{hu2007inexact}, where for the strongly convex quadratic objective function, Hu and Dai proposed an inexact Barzilai-Borwein (BB) method and established the R-linear convergence of the inexact BB method if the inexactitude of the approximate gradient is well controlled.   However, the method in \cite{hu2007inexact}  can not be directly extended to solve problem  \eqref{prob:PRW:max} due to the lack of the appropriate line search condition.  Motivated by Hu and Dai's method and successful extensions of the BB method \cite{barzilai1988two} to the Riemannian case \cite{wen2013feasible,jiang2015framework,gao2018new,iannazzo2018riemannian},  our solution to the second issue is to {\it develop an inexact Riemannian gradient descent method with the \rev{BB} stepsize and an appropriate nonmonotone line search condition}.
 
\subsection{Our contributions} In this paper,  by reformulating \eqref{equ:PRW} as an optimization problem defined over the Cartesian product of the Stiefel manifold and the Euclidean space with additional inequality constraints (see \eqref{prob:manifold:nlp:2} further ahead), we design a Riemannian exponential ALM (\ReALM) with customized methods for solving the subproblems to efficiently and faithfully compute the PRW distance. Our main contributions are summarized as follows. 
\begin{itemize}[leftmargin=.8cm]
\item We propose a \ReALM~method for computing the PRW distance,  in which a series of subproblems with dynamically decreasing penalty parameters and adaptively updated multiplier matrices is solved approximately. In theory, we establish the global convergence of \ReALM~in the sense that any limit point of the sequence generated by the algorithm is a stationary point of the original problem. Numerically, \ReALM~always outperforms the Riemannian exponential penalty approach, which solves a series of subproblems with dynamically decreasing penalty parameters or with a sufficiently small penalty parameter. 
In particular, our method can avoid too small penalty parameters in some cases compared with the Riemannian exponential penalty approach.
\item  To efficiently solve the subproblem  \eqref{prob:PRW:max} or \eqref{prob:sub:RALMexp:0:1}, we propose a  framework of inexact Riemannian gradient descent  (\iRGD) methods with convergence and complexity guarantees. Particularly, we give a practical algorithm, namely, the inexact Riemannian \rev{BB} method with Sinkhorn iteration (\iRBBS), wherein a flexible number of  Sinkhorn iterations is performed to compute an inexact gradient with respect to  the projection matrix.   Compared with \RABCDv, our proposed \iRBBS~can not only return a stronger $(\epsilon_1, \epsilon_2)$-stationary point of PRW \eqref{equ:PRW}, compared with the definitions in \cite{lin2020projection,huang2021riemannian}, in $\Ocal(\max\{\epsilon_1^{-2}, \epsilon_2^{-2}\}\epsilon_2^{-1})$ iterations (see \cref{prop:complexity:prw} and \cref{remark:kkt:strong}),  but also has a better numerical performance, which mainly benefits from the inexact BB stepsize.  As a by-product, we also establish the complexity results of the original Sinkhorn iteration in a simple way. 
\end{itemize}
\subsection{Notations and preliminaries}
 For a scalar $a$, let $\lceil a \rceil$ be the smallest nonnegative integer larger than $a$. 
Denote $[n]=\{1, 2, \ldots, n\}$. For a matrix $A$,  denote $A_{\min} = \min_{ij} A_{ij}$.  Denote by $\|A\|_{\Fsf}$ the Frobenius norm of $A$. The $\ell_\infty$-norm of   $A$ is  $\|A\|_{\infty} = \max_{ij} |A|_{ij}$, while the $\ell_1$-norm is $\|A\|_1 = \sum_{ij}|A_{ij}|$.    The variation seminorm of $A$ is   $\|A\|_{\var}= \max_{ij} A_{ij} - \min_{ij} A_{ij}$.  
The notation $\log A$ denotes  a matrix of the same size as $A$, and $(\log A)_{ij} = \log A_{ij}$. 
For a vector $x \in \Rbb^n$, $\Diag(x)$ is an $n \times n$ diagonal matrix with $x$ being its main diagonal. 
We use $\Rbb_+^{n \times n}$ and $\Rbb_{++}^{n \times n}$ to denote the nonnegative and positive orthants of $\Rbb^{n \times n}$, respectively. 
The entropy function is given as $H(\pi) = -\sum_{ij} \pi_{ij} \log \pi_{ij}$ for $\pi \in \Rbb^{n \times n}_+$ and $\|\pi\|_1 = 1$ with the convention that $H(\pi) = -\infty$  if one of the entries $\pi_{ij} = 0$.   For $p,q \in \Delta^n$, the Kullbbback-Leibler divergence between $p$ and $q$ is $\KL(p\| q) = \sum_{i} p_i \log (p_i/q_i)$. Throughout this paper,   we define the matrix $C\in \Rbb^{n \times n}$ with $C_{ij} = \|x_i - y_j\|^2$ and  define $V_{\pi} = \sum_{ij} \pi_{ij}M_{ij}$.

The Riemannian metric endowed on the Stiefel manifold $\Gcal$  is taken as the usual metric on $\Rbb^{d \times k}$.  The tangent space at $U \in \Gcal$ is $\Tcal_U\Gcal = \{\xi\in\Rbb^{d \times k} \mid U^{\Tsf}\xi +\xi^{\Tsf} U = 0\}$.  Let $\Tcal \Gcal = \{(U, \xi) \mid U \in \Gcal \mbox{ and } \xi \in \Tcal_U \Gcal\}$ be the tangent bundle of $\Gcal$.  A smooth map $\Retr: \Tcal \Gcal \to \Gcal : (U, \xi) \mapsto \Retr_U(\xi)$ is called a retraction if each curve $\Rcal(t)= \Retr_U(t\xi)$ satisfies $\Rcal(0) = U$ and $\Rcal'(0) = \xi$; see \cite[Definition 3.47]{boumal2020introduction} or \cite[Definition 4.1.1]{absil2009optimization}. 
 The retraction on the Stiefel manifold has the following nice properties; see \cite{liu2019quadratic,boumal2019global} for instance.  
\begin{fact} \label{lemma:retraction:property}
There exist  positive constants $L_1$ and $L_2$ such that 
$\|\Retr_U(\xi) - U\|_{\Fsf} \leq L_1 \|\xi\|_{\Fsf}$ and $\|\Retr_U(\xi) - (U + \xi)\|_{\Fsf} \leq L_2 \|\xi\|_{\Fsf}^2$  
hold for all $\xi \in \Tcal_U\Gcal$ and $U \in \Gcal$.  
\end{fact}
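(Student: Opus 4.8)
The plan is to combine the compactness of the Stiefel manifold $\Gcal$ with the smoothness of the retraction $\Retr\colon \Tcal\Gcal\to\Gcal$, treating separately the regimes $\|\xi\|_{\Fsf}\le 1$ and $\|\xi\|_{\Fsf}>1$. The point of the case split is that the Taylor-type estimates are local in nature, while for large tangent vectors the bounds become trivial since everything in sight lies on a bounded set.

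For the bounded regime, fix $U\in\Gcal$ and $\xi\in\Tcal_U\Gcal$ with $\|\xi\|_{\Fsf}\le 1$, and consider the smooth curve $\Rcal(t):=\Retr_U(t\xi)$ for $t\in[0,1]$; by the definition of a retraction, $\Rcal(0)=U$ and $\Rcal'(0)=\xi$. Viewing $\Retr$ as a smooth map in the pair (base point, tangent vector) and using that the set $\{(U,\eta):U\in\Gcal,\ \|\eta\|_{\Fsf}\le 1\}$ is compact, its first and second derivatives with respect to the tangent-vector argument are bounded there by a constant $c>0$; consequently $\|\Rcal'(t)\|_{\Fsf}\le c\|\xi\|_{\Fsf}$ and $\|\Rcal''(t)\|_{\Fsf}\le c\|\xi\|_{\Fsf}^2$ for all $t\in[0,1]$. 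Then $\|\Retr_U(\xi)-U\|_{\Fsf}=\bigl\|\int_0^1\Rcal'(t)\,dt\bigr\|_{\Fsf}\le c\|\xi\|_{\Fsf}$, while Taylor's formula with integral remainder, $\Retr_U(\xi)-(U+\xi)=\int_0^1(1-t)\Rcal''(t)\,dt$, gives $\|\Retr_U(\xi)-(U+\xi)\|_{\Fsf}\le\tfrac{c}{2}\|\xi\|_{\Fsf}^2$.

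For the unbounded regime $\|\xi\|_{\Fsf}>1$, I would use only that $\Retr_U(\xi)$ and $U$ lie on $\Gcal$, so $\|\Retr_U(\xi)\|_{\Fsf}=\|U\|_{\Fsf}=\sqrt{k}$; the triangle inequality then yields $\|\Retr_U(\xi)-U\|_{\Fsf}\le 2\sqrt{k}\le 2\sqrt{k}\,\|\xi\|_{\Fsf}$ and $\|\Retr_U(\xi)-(U+\xi)\|_{\Fsf}\le 2\sqrt{k}+\|\xi\|_{\Fsf}\le(2\sqrt{k}+1)\|\xi\|_{\Fsf}^2$. Taking $L_1=\max\{c,\,2\sqrt{k}\}$ and $L_2=\max\{c/2,\,2\sqrt{k}+1\}$ then covers both regimes, uniformly over $U\in\Gcal$ and $\xi\in\Tcal_U\Gcal$.

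The main obstacle is making the uniform derivative bound fully rigorous: one needs the relevant partial derivatives of $\Retr_U(\eta)$ with respect to $\eta$ to be jointly continuous in $(U,\eta)$ so that compactness actually delivers a single constant $c$. This follows from the smoothness of the retraction as a map $\Tcal\Gcal\to\Gcal$ together with the fact that $\Tcal\Gcal$ is an embedded submanifold of $\Rbb^{d\times k}\times\Rbb^{d\times k}$; alternatively, for a concrete polar- or QR-based retraction on $\Gcal$ one can differentiate the explicit closed form of $\Retr_U(\xi)$ directly. This is exactly the type of estimate established in \cite{liu2019quadratic,boumal2019global}, which I would cite for the details.
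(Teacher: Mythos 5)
The paper does not prove this statement at all: it is stated as a \emph{Fact} and simply deferred to the cited references \cite{liu2019quadratic,boumal2019global}, where the standard argument is exactly the one you give --- smoothness of $\Retr$ on the compact set $\{(U,\eta): U\in\Gcal,\ \eta\in\Tcal_U\Gcal,\ \|\eta\|_{\Fsf}\le 1\}$ yields the Taylor-type bounds for small $\xi$, and boundedness of $\Gcal$ (with $\|U\|_{\Fsf}=\sqrt{k}$) makes the large-$\xi$ case trivial. Your proof is correct, including the constant bookkeeping in both regimes, so it matches the approach the paper relies on.
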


The Riemannian gradient of a smooth function $f: \Rbb^{d \times k} \to \Rbb$ at $U$ is defined as  $\grad f(U)$, which satisfies  $\iprod{\grad f(U)}{\xi} = \iprod{\nabla f(U)}{\xi}$ for all $\xi\in \Tcal_U\Gcal$.  If $U^{\Tsf}\nabla f(U)$ is symmetric, we have 
$\grad f(U) =  \proj_{\Tcal_U\Gcal}(\nabla f(U)) = (I_d - UU^{\Tsf})\nabla f(U).$

\subsection{Organization}
 The rest of this paper is organized as follows.    The proposed \ReALM~method is introduced in \cref{sec:ReALM}.  A framework of inexact RGD methods and a practical \iRBBS~for solving the subproblem in \ReALM~are proposed in  \cref{sec:subproblem} and \cref{section:iRBBSs}, respectively.   Numerical results are presented in \cref{sec:experiments}.  Finally, we draw some concluding remarks in \cref{sec:conclusions}.
 
\section{A \ReALM~for computing the PRW distance  \eqref{equ:PRW}} \label{sec:ReALM}
In this section, we first give a reformulation of PRW distance problem \eqref{equ:PRW} in \cref{subsection:ReALM:reformulation} and then propose a Riemannian exponential ALM, summarized as \cref{alg:ReALM}, for solving the reformulation problem in \cref{subsection:ReALM}.  The global convergence of \cref{alg:ReALM} is discussed in \cref{subsection:ReALM:convergence}. 
\subsection{Reformulation of PRW distance problem \eqref{equ:PRW}}   \label{subsection:ReALM:reformulation}
Motivated by the dual approach in \cite{huang2021riemannian}, we first rewrite \eqref{equ:PRW} as an optimization problem over the Cartesian product of the Stiefel manifold and the Euclidean space with additional nonlinear inequality constraints. 

Given a fixed $U \in \Gcal$,   consider the OT  problem 
\be \label{prob:OT} 
\min_{\pi\in\Rbb^{n \times n}}   \iprod{\pi}{C(U)} \quad \st \quad  \pi \1 = r, ~~\pi^{\Tsf} \1 = c,~~ \pi \geq 0. 
\ee
Let $\hat \Lcal(\pi, \alpha, \beta) = \iprod{\pi}{C(U)}  + \iprods{\alpha}{\pi \1 - r} + \iprods{\beta}{\pi^{\Tsf}\1 - c} $ be the Lagrangian function with $\alpha \in \Rbb^n$ and $\beta  \in \Rbb^n$ being the  Lagrange multipliers corresponding to the constraints $\pi \1 = r$ and $\pi^{\Tsf} \1 = c$, respectively.  As done in \cite{huang2021riemannian,lin2022efficiency} for deriving the dual of the
 entropy-regularized OT problem,  we add a redundant constraint $\|\pi\|_1= 1$ and  
  derive the dual problem   of \eqref{prob:OT}  as 
 $\max_{\alpha\in\Rbb^n, \beta\in \Rbb^n} \min_{\pi\geq 0, \|\pi\|_1=1}  \hat \Lcal(\pi, \alpha, \beta),$
that is, 
\[ 
\max\limits_{\alpha\in \Rbb^n, \beta \in \Rbb^n}  - h(\x),
\]
where  $\x = (\alpha, \beta, U)$
 and 
 \be\label{equ:hx:def} 
 h(\x) = r^{\Tsf} \alpha   +  c ^{\Tsf} \beta - \varphi(\x)_{\min} \quad\mbox{with}\quad  \varphi_{ij}(\x) =  \alpha_i + \beta_j +  \langle \rev{M_{ij}}, UU^{\Tsf}\rangle.
 \ee
Therefore, the PRW distance  between $\mu_n$ and $\nu_n$  defined in \eqref{equ:PRW} is equal to 
\be \label{prob:manifold:nlp}
\Pcal_k^2(\mu_n,\nu_n) = - \min\limits_{\x \in \Mcal} h(\x),
\ee
where $\Mcal = \Rbb^n \times \Rbb^n \times \Gcal.$
The corresponding minimization problem in \eqref{prob:manifold:nlp}  can be  reformulated as the minimization problem as
\be  \label{prob:manifold:nlp:2} 
\min\limits_{\x \in \Mcal, y \in \Rbb} \ r^{\Tsf} \alpha   +  c ^{\Tsf} \beta  + y \quad 
\st \quad \varphi_{ij}(\x)  + y \geq 0, \quad  \forall~i, j \in [n].
\ee

In this paper, we shall focus on the formulation \eqref{prob:manifold:nlp:2}, whose first-order necessary conditions are established as follows. 
\begin{lemma}[First-order necessary conditions]\label{lem:1stopt}
Given $\bar \x \in \Mcal$ and $\bar y\in\Rbb$, suppose that $(\bar \x, \bar y)$  is a local minimizer of problem \eqref{prob:manifold:nlp:2}, then $(\bar \x, -\varphi(\bar \x)_{\min})$ is a stationary point of problem \eqref{prob:manifold:nlp:2}, namely,  there exists $\bar \pi \in \Pi(r,c)$ such that 
\be \label{equ:kkt}
 \proj_{\Tcal_{\bar U}\Gcal} \left(  -2V_{\bar \pi}   \bar U \right)  =  0, \quad \iprod{\bar \pi}{Z(\bar  \x)} = 0,
 \ee
where $Z(\bar \x) \in \Rbb^{n \times n}$ with 
\be \label{equ:Z}
[Z(\bar \x) ]_{ij}= \varphi_{ij}(\bar \x) - \varphi(\bar \x)_{\min}.
\ee
\end{lemma}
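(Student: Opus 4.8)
The plan is to obtain \eqref{equ:kkt} as the KKT system of the smooth, inequality-constrained program \eqref{prob:manifold:nlp:2}, so I would proceed in three steps. \textbf{Step 1 (pin down $\bar y$).} Since $(\bar\x,\bar y)$ is feasible, $\bar y\ge -\varphi(\bar\x)_{\min}$; if this were strict then $(\bar\x,\bar y-\epsilon)$ would still be feasible for small $\epsilon>0$ while strictly decreasing the objective $r^\Tsf\alpha+c^\Tsf\beta+y$, contradicting local minimality. Hence $\bar y=-\varphi(\bar\x)_{\min}$, which is why the conclusion is stated for the point $(\bar\x,-\varphi(\bar\x)_{\min})$. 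Observe also that, with this value of $\bar y$, the $(i,j)$-th constraint $\varphi_{ij}(\x)+y\ge 0$ is active exactly when $[Z(\bar\x)]_{ij}=0$.

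\textbf{Step 2 (constraint qualification and KKT).} Write the constraints as $g_{ij}(\x,y):=-\varphi_{ij}(\x)-y\le 0$, smooth on the product manifold $\Mcal\times\Rbb=\Gcal\times\Rbb^n\times\Rbb^n\times\Rbb$. The tangent direction $(d_\alpha,d_\beta,d_U,d_y)=(0,0,0,1)$ satisfies $\mathrm{D}g_{ij}(\bar\x,\bar y)[(0,0,0,1)]=-1<0$ for every $(i,j)$, in particular for the active ones, so MFCQ holds at $(\bar\x,\bar y)$. Passing to a smooth local parametrization of $\Gcal$ around $\bar U$ turns \eqref{prob:manifold:nlp:2} locally into a classical smooth nonlinear program in Euclidean coordinates, for which MFCQ is inherited; the classical KKT theorem then yields multipliers $\bar\pi_{ij}\ge 0$ with $\bar\pi_{ij}(\varphi_{ij}(\bar\x)+\bar y)=0$ for all $i,j$ and with the (Riemannian) gradient of the Lagrangian $\Lcal(\x,y,\bar\pi)=r^\Tsf\alpha+c^\Tsf\beta+y-\sum_{ij}\bar\pi_{ij}(\varphi_{ij}(\x)+y)$ vanishing at $(\bar\x,\bar y)$; translating the $U$-block back gives $\grad_U\Lcal=\proj_{\Tcal_{\bar U}\Gcal}(\nabla_U\Lcal)=0$.

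\textbf{Step 3 (read off \eqref{equ:kkt}).} Vanishing of the $\alpha$- and $\beta$-blocks of $\nabla\Lcal$ gives $\bar\pi\1=r$ and $\bar\pi^\Tsf\1=c$, which together with $\bar\pi\ge 0$ means $\bar\pi\in\Pi(r,c)$ (the $y$-block gives the redundant identity $\|\bar\pi\|_1=1$). Using $M_{ij}=M_{ij}^\Tsf$ one has $\nabla_U\langle M_{ij},UU^\Tsf\rangle=2M_{ij}U$, hence $\nabla_U\Lcal(\bar\x,\bar y,\bar\pi)=-2\sum_{ij}\bar\pi_{ij}M_{ij}\bar U=-2V_{\bar\pi}\bar U$ and the $U$-block reads $\proj_{\Tcal_{\bar U}\Gcal}(-2V_{\bar\pi}\bar U)=0$. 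Substituting $\bar y=-\varphi(\bar\x)_{\min}$ into the complementary slackness conditions gives $\bar\pi_{ij}[Z(\bar\x)]_{ij}=0$ for all $i,j$; since $\bar\pi\ge 0$ and $Z(\bar\x)\ge 0$ entrywise, summing yields $\iprod{\bar\pi}{Z(\bar\x)}=0$. Together these are exactly \eqref{equ:kkt}. I expect no serious obstacle: the only non-routine ingredient is the constraint qualification needed to pass from local optimality to the existence of $\bar\pi$, and it is immediate here precisely because the epigraph variable $y$ enters every inequality with the same sign, so decreasing $y$ strictly relaxes all active constraints; the remainder is differentiation of $h$ and bookkeeping.
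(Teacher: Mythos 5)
Your proof is correct, but it follows a genuinely different route from the paper's. You keep \eqref{prob:manifold:nlp:2} in its smooth, inequality-constrained form, verify MFCQ directly (the epigraph variable $y$ enters every constraint with coefficient $-1$, so the direction $(0,0,0,1)$ strictly decreases all active constraints), and invoke the classical KKT theorem in a local chart of $\Gcal$; the multiplier $\bar\pi$ is the KKT multiplier, with $\bar\pi\1=r$, $\bar\pi^{\Tsf}\1=c$ coming from stationarity in $\alpha,\beta$ and $\iprod{\bar\pi}{Z(\bar\x)}=0$ from complementary slackness. The paper instead first eliminates $y$ (using the same observation that $\bar y=-\varphi(\bar\x)_{\min}$ at a minimizer) to reduce to the unconstrained nonsmooth problem $\min_{\x\in\Mcal}h(\x)$, shows $h$ is $\|C\|_{\infty}$-weakly convex in $U$ via the max-of-smooth-functions structure, computes $\partial h(\bar\x)$ as $(r,c,0)$ plus the convex hull of $(-e_i,-e_j,-2M_{ij}\bar U)$ over the active set $\Acal(\bar\x)$, and applies Riemannian optimality conditions for nonsmooth functions; there $\bar\pi$ arises as the convex-combination weights in the subdifferential, automatically supported on the active set and summing to one. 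Your approach is more elementary and avoids the nonsmooth machinery, at the price of having to justify the constraint qualification and the passage through a chart (both of which you do correctly — the chart argument is standard, and the differential of the parametrization carries stationarity of the Lagrangian in local coordinates to vanishing of the projection onto $\Tcal_{\bar U}\Gcal$). The paper's approach needs no constraint qualification because the subdifferential of a pointwise maximum already packages the multipliers, and it reuses the function $h$ and the weak-convexity viewpoint that appear elsewhere in the paper. Both arguments produce the same stationarity system \eqref{equ:kkt}, and your observation that the $y$-stationarity $\|\bar\pi\|_1=1$ is redundant given $\bar\pi\1=r$ and $\1^{\Tsf}r=1$ is accurate.
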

\begin{proof}
Since $(\bar \x, \bar y)$ is a local minimizer of problem \eqref{prob:manifold:nlp:2},  there must hold $\bar y = - \varphi(\bar \x)_{\min} = \max_{ij} \{- \varphi_{ij}(\bar \x)\}$. Moreover, such $\bar \x$ is also a local minimizer of problem  
$\min_{\x \in \Mcal} h(\x)$, where $h(\x)$ is given in \eqref{equ:hx:def}. 
For fixed $\alpha$ and $\beta$,  it is easy to see that $-\varphi_{ij}(\x) + \|C\|_{\infty} \|U\|_{\Fsf}^2$ is convex with respect to $U$. We thus know that  the function 
$- \varphi(\bar \x)_{\min}  + \|C\|_{\infty} \|U\|_{\Fsf}^2 =   \max_{ij} \left\{-\varphi_{ij}(\x) + \|C\|_{\infty} \|U\|_{\Fsf}^2 \right\}$
is convex with respect to $U$, which means that the function $- \varphi(\bar \x)_{\min}$ and thus $h(\x)$ is $\|C\|_{\infty}$-weakly convex with respect to $U$ \cite[Proposition 4.3]{vial1983strong}. 
  Let $\Acal(\bar\x) = \{(i,j)\in [n] \times [n] \mid \varphi_{ij}(\bar \x) = \varphi(\bar \x)_{\min}\}$. 
By \cite[Proposition 4.6]{vial1983strong}, we have $\partial h(\bar \x) = (r, c, 0) + \mathrm{conv} \left\{(-e_i, -e_j,  -2 M_{ij} \bar U) \mid  (i,j) \in \Acal(\bar \x) \right\}$, \rev{where $e_i$ is the $i$-th standard unit vector in $\Rbb^n$.}
Moreover, by \cite[Theorem 4.1]{yang2014optimality}, \cite[Theorem 5.1]{yang2014optimality}, and $\Tcal_{\bar \x}\Mcal =\Rbb^n \times \Rbb^n \times \Tcal_{\bar \x}\Gcal$, there must hold that 
 $0 \in  \proj_{\Tcal_{\bar \x}\Mcal} \partial h(\bar \x)$.
Putting all the above things together  shows that there exists $\bar \pi\in \Rbb_+^{n \times n}$ with 
$\bar \pi_{ij} = 0$  for all  $(i,j) \not \in \Acal(\bar \x)$ and 
$\sum_{(i,j) \in \Acal(\bar \x)} \bar \pi_{i,j} = 1$ 
  such that $\bar \pi \in \Pi(r,c)$ and \eqref{equ:kkt} hold.  The proof is completed. 
\end{proof}

Based on \cref{lem:1stopt}, we define the following approximate stationary point of problem \eqref{prob:manifold:nlp:2},  \rev{wherein} we require that the multiplier matrix $\tilde \pi$ lies in $\Pi(r,c)$.
\begin{definition}\label{def:epsilon:kkt:ours}
Given $\tilde \x \in \Mcal$, we say $(\tilde \x, -\varphi(\tilde \x)_{\min})$ an $(\epsilon_1, \epsilon_2)$-stationary point of problem \eqref{prob:manifold:nlp:2} (equivalent to PRW \eqref{equ:PRW}), if there exists  $\tilde \pi \in \Pi(r,c)$ such that  
$\big\|\proj_{\Tcal_{\tilde U}\Gcal} \big(-2 V_{\tilde \pi}  \tilde U \big)\big\|_{\Fsf}  \leq   \epsilon_1$ and $\iprod{\tilde \pi}{Z(\tilde  \x)} \leq \epsilon_2.$
\end{definition}
\begin{remark}\label{remark:kkt:strong}
Our \cref{def:epsilon:kkt:ours}  of the $(\epsilon_1, \epsilon_2)$-stationary point is stronger than the corresponding point in \cite[Definition 4.1]{huang2021riemannian} and  \cite[Definition 2.7]{lin2020projection} in the sense that the point satisfying the conditions in our definition also satisfies all conditions in \cite{huang2021riemannian} and in \cite{lin2020projection}. (By \cite[Section A]{huang2021riemannian},  the point  \rev{in \cite[Definition 4.1]{huang2021riemannian}} is stronger than that in  \cite[Definition 2.7]{lin2020projection}). To show that, we only need to verify that $\iprod{\tilde \pi}{Z(\tilde  \x)} \leq \epsilon_2$ can imply $f(\tilde \pi, \tilde U) - p(\tilde U) \leq \epsilon_2$, which is clear  since $f(\tilde \pi, \tilde U) - p(\tilde U) \leq \iprod{\tilde \pi}{Z(\tilde  \x)}$.  This inequality comes from the fact that problem \eqref{prob:manifold:nlp:2} is a dual formulation of  $p(U): = \min_{\pi \in \Pi(r, c)} f(\pi, U)$ \rev{with $f(\pi, U)= \iprod{\pi}{C(U)}$} for fixed $U$. 
\end{remark}

\subsection{\ReALM~for solving problem \eqref{prob:manifold:nlp:2}} \label{subsection:ReALM}
In this subsection,  we extend the exponential ALM to the manifold case, wherein the manifold constraints are kept in the subproblem,  to solve problem   \eqref{prob:manifold:nlp:2}.
We define the  augmented Lagrangian function  based on the exponential penalty function as 
\[
\widetilde \Lcal_{\eta}(\x, y, \pi)= r^{\Tsf} \alpha   +  c ^{\Tsf} \beta  + y +  \eta \sum\nolimits_{ij} \pi_{ij} \rev{\exp\left(- \frac{\varphi_{ij}(\x)  + y}{\eta} \right)},
\]
where  $\pi \in \Rbb_{++}^{n \times n}$ is the Lagrange multiplier  corresponding to the inequality constraints in \eqref{prob:manifold:nlp:2}  and $\eta > 0$ is the penalty parameter. Fix the current estimate of $\pi$ and $\eta$ as $\pi^k$ and $\eta_k$, respectively.  Then the subproblem at the $k$-th iteration is given as
\be \label{prob:sub:RALMexp:0} 
 \min_{\x \in \Mcal, y\in\Rbb} \widetilde \Lcal_{\eta_k}(\x, y, \pi^k). 
\ee
Observe that  the minimizer $(\x^{k\ast}, y_k^*)$ of problem \eqref{prob:sub:RALMexp:0} must satisfy the relationship $y_k^* = \eta_k \log(\|\zeta_{\eta}(\x^{k \ast}, \pi^k)\|_1)$,  where the  matrix  $\zeta_{\eta}(\x, \pi)\in \Rbb^{n \times n}$ is given as  
\be \label{equ:zeta} 
  [\zeta_{\eta}(\x, \pi)]_{ij}= \rev{\pi_{ij}\exp\left( - \frac{\varphi_{ij}(\x)}{\eta}\right) = \exp\left( - \frac{\varphi_{ij}(\x) - \eta \log \pi_{ij}}{\eta}\right)}. 
\ee

By eliminating the variable $y$,  we obtain an equivalent formulation of \eqref{prob:sub:RALMexp:0}  as  
  \be \label{prob:sub:RALMexp:0:1} 
   \min_{\x \in \Mcal}\, \left\{\Lcal_{\eta_k}(\x, \pi^k): =   r^{\Tsf} \alpha   +  c ^{\Tsf} \beta  + \eta_k \log(\|\zeta_{\eta_k}(\x, \pi^k)\|_1)\right\}.
\ee
Define  the function $\pi_{\eta}(\x, \pi^k)\in \Rbb^{n \times n}$ with 
\be \label{equ:pi:xi:kappa}
[\pi_{\eta}(\x, \pi^k)]_{ij} = \rev{\frac{[\zeta_{\eta}(\x, \pi^k)]_{ij}}{\|\zeta_{\eta}(\x, \pi^k)\|_1}}.
 \ee
Let $\err_1(\x, \pi^k) = \|\grad_{U}\Lcal_{\eta_k}(\x, \pi^k)\|_{\Fsf} = \| \proj_{\Tcal_U\Gcal} ( -2 V_{\pi_{\eta}(\x,\pi^k)} U)\|_{\Fsf}$ and 
$\err_2(\x, \pi^k) = \|\nabla_{\alpha}\Lcal_{\eta_k}(\x, \pi^k)\|_1 + \|\nabla_{\beta} \Lcal_{\eta_k}(\x, \pi^k)\|_1 = \|r - \pi_{\eta}(\x,\pi^k)\1\|_1 + \| c - \pi_{\eta}(\x,\pi^k)^{\Tsf} \1\|_1.$  We  give the first-order optimality condition of problem  \eqref{prob:sub:RALMexp:0:1}  without a proof. 

\begin{lemma} \label{lem:1stopt:subproblem}
Let $\bar \x = (\bar \alpha, \bar \beta, \bar U) \in \Mcal$ be a local minimizer of problem \eqref{prob:sub:RALMexp:0:1} \rev{(with fixed $\pi^k$).} Then, $\bar \x$ is a stationary point of problem \eqref{prob:sub:RALMexp:0:1}, namely, $\err_1(\bar \x, \pi^k) $$= \err_2(\bar \x, \pi^k) $\\$= 0$.  
\end{lemma}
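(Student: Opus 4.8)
The plan is to exploit that, unlike problem~\eqref{prob:manifold:nlp:2}, the reduced subproblem~\eqref{prob:sub:RALMexp:0:1} has a \emph{smooth} objective on the product manifold $\Mcal = \Rbb^n\times\Rbb^n\times\Gcal$: since $\pi^k$ has strictly positive entries and the exponential is positive, $\|\zeta_{\eta_k}(\x,\pi^k)\|_1>0$ for every $\x\in\Mcal$, so $\log\|\zeta_{\eta_k}(\cdot,\pi^k)\|_1$ is smooth there. Hence the standard first-order necessary condition applies at the local minimizer $\bar\x$: the Riemannian gradient of $\Lcal_{\eta_k}(\cdot,\pi^k)$ vanishes, and because $\Tcal_{\bar\x}\Mcal = \Rbb^n\times\Rbb^n\times\Tcal_{\bar U}\Gcal$ this decouples into $\nabla_\alpha\Lcal_{\eta_k}(\bar\x,\pi^k)=0$, $\nabla_\beta\Lcal_{\eta_k}(\bar\x,\pi^k)=0$, and $\grad_U\Lcal_{\eta_k}(\bar\x,\pi^k)=0$. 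It then suffices to compute these three gradient blocks and recognize $\err_1$ and $\err_2$.

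First I would differentiate in $\alpha$ and $\beta$. As $\varphi_{ij}(\x)$ is affine in $\alpha_i$ and $\beta_j$ with unit coefficients, the chain rule applied to $\eta_k\log\|\zeta_{\eta_k}(\x,\pi^k)\|_1$ together with the definition~\eqref{equ:pi:xi:kappa} of $\pi_{\eta_k}(\x,\pi^k)$ gives $\nabla_\alpha\Lcal_{\eta_k}(\x,\pi^k)=r-\pi_{\eta_k}(\x,\pi^k)\1$ and $\nabla_\beta\Lcal_{\eta_k}(\x,\pi^k)=c-\pi_{\eta_k}(\x,\pi^k)^\Tsf\1$; the sum of their $\ell_1$-norms is exactly $\err_2(\x,\pi^k)$, so $\err_2(\bar\x,\pi^k)=0$ follows.

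Next I would differentiate in $U$. Since each $M_{ij}$ is symmetric, $\nabla_U\iprod{M_{ij}}{UU^\Tsf}=2M_{ij}U$, so another application of the chain rule yields
\[
\nabla_U\Lcal_{\eta_k}(\x,\pi^k) = -2\sum\nolimits_{ij}[\pi_{\eta_k}(\x,\pi^k)]_{ij}M_{ij}U = -2V_{\pi_{\eta_k}(\x,\pi^k)}U,
\]
with $V_\pi$ as in the preliminaries. Projecting onto $\Tcal_U\Gcal$ gives $\grad_U\Lcal_{\eta_k}(\x,\pi^k)=\proj_{\Tcal_U\Gcal}\big(-2V_{\pi_{\eta_k}(\x,\pi^k)}U\big)$, whose Frobenius norm is precisely $\err_1(\x,\pi^k)$; hence $\err_1(\bar\x,\pi^k)=0$, and combining the two displays completes the argument.

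The computation is entirely routine and I do not expect any real obstacle. The only two points deserving a line of care are (i) confirming the smoothness of $\Lcal_{\eta_k}(\cdot,\pi^k)$ on $\Mcal$, which rests on the positivity of $\pi^k$, and (ii) using the product structure of the tangent space so that stationarity splits into the two Euclidean conditions on $(\alpha,\beta)$ and the single Riemannian condition on $U$ --- exactly as in the proof of \cref{lem:1stopt}, but simpler here because the objective is differentiable rather than merely weakly convex.
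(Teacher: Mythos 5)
Your proof is correct, and the paper in fact states this lemma \emph{without} proof precisely because the argument is the routine one you give: $\Lcal_{\eta_k}(\cdot,\pi^k)$ is smooth on $\Mcal$ (positivity of $\pi^k$ keeps $\|\zeta_{\eta_k}(\x,\pi^k)\|_1>0$), the Riemannian gradient on the product manifold decouples, and the paper's own definitions of $\err_1$ and $\err_2$ are already written as the norms of exactly the gradient blocks you compute. No gaps; your two ``points deserving care'' are the right ones and both check out.
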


The $(\epsilon_1, \epsilon_2)$-stationary point of problem  \eqref{prob:sub:RALMexp:0:1} is defined as follows. 
\begin{definition}\label{def:epsilon:kkt:subproblem}
We say $\tilde \x \in \Mcal$ an $(\epsilon_1, \epsilon_2)$-stationary point of problem  \eqref{prob:sub:RALMexp:0:1} (with fixed $\pi^k$) if   $\err_1(\tilde \x, \pi^k) \leq \epsilon_1$ and $\err_2(\tilde \x, \pi^k) \leq \epsilon_2$.
\end{definition}

Denote by  $\x^k$ an $(\epsilon_{k,1}, \epsilon_{k,2})$-stationary point of the subproblem \eqref{prob:sub:RALMexp:0:1}.
We require that our $\x^k$ also satisfies the following conditions:
\be \label{equ:condition:xk}
r^\Tsf \alpha^k = c^\Tsf \beta^k, \quad \|\zeta_{\eta_k}(\x^k, \pi^k)\|_1 = 1, 
\quad  \Lcal_{\eta_k}(\x^k, \pi^k) \leq \Lcal_{\eta_k}(\x^{k-1}, \pi^k).
\ee
These conditions are important to establish the convergence of \ReALM~(as shown later).  The following key observation to the subproblem \eqref{prob:sub:RALMexp:0:1} shows that  $\x^k$ satisfying the first two conditions in \eqref{equ:condition:xk} can be easily obtained.  We omit the detailed proof for brevity since they can be verified easily. 
\begin{proposition} \label{proposition:shift}
For any $\x \in \Mcal$,  consider  $\x^s = (\alpha + \upsilon_1\1, \beta + \upsilon_2 \1, U) \in \Mcal$ with 
$\upsilon_1 = (c^\Tsf \beta - r^\Tsf {\alpha} +  \eta\log(\|\zeta_{\eta}(\x, \pi)\|_1))/2$ and $\upsilon_2 = (r^\Tsf {\alpha} - c^\Tsf \beta + \eta\log(\|\zeta_{\eta}(\x, \pi)\|_1))/2.$
Then we have that $r^\Tsf (\alpha + \upsilon_1\1) = c^\Tsf (\beta + \upsilon_2\1)$ and $\|\zeta_{\eta_k}(\x^s, \pi)\|_1 = 1$ and also that  $\Lcal_{\eta}(\x, \pi) = \Lcal_{\eta}(\x^s, \pi)$  and $\nabla_{\x}\Lcal_{\eta}(\x, \pi) = \nabla_{\x} \Lcal_{\eta}(\x^s, \pi)$. 
\end{proposition}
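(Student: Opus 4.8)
The plan is a direct verification. Everything hinges on two elementary observations: shifting $\alpha\mapsto\alpha+\upsilon_1\1$ and $\beta\mapsto\beta+\upsilon_2\1$ leaves $U$ untouched and merely translates $\varphi(\x)$ by the constant $\upsilon_1+\upsilon_2$, and since $r,c\in\Delta^n$ we have $r^\Tsf\1=c^\Tsf\1=1$. First I would record the two identities that come straight from the definitions of $\upsilon_1,\upsilon_2$, namely $\upsilon_1+\upsilon_2=\eta\log(\|\zeta_\eta(\x,\pi)\|_1)$ and $\upsilon_1-\upsilon_2=c^\Tsf\beta-r^\Tsf\alpha$. Claim (i), $r^\Tsf(\alpha+\upsilon_1\1)=c^\Tsf(\beta+\upsilon_2\1)$, is then immediate: $r^\Tsf(\alpha+\upsilon_1\1)=r^\Tsf\alpha+\upsilon_1$ and $c^\Tsf(\beta+\upsilon_2\1)=c^\Tsf\beta+\upsilon_2$, and their difference equals $(\upsilon_1-\upsilon_2)-(c^\Tsf\beta-r^\Tsf\alpha)=0$.

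Next I would use the translation identity $\varphi_{ij}(\x^s)=\varphi_{ij}(\x)+\upsilon_1+\upsilon_2$ for all $i,j$, read off from \eqref{equ:hx:def} since the $U$-block is unchanged. Substituting into \eqref{equ:zeta} gives $[\zeta_\eta(\x^s,\pi)]_{ij}=\exp(-(\upsilon_1+\upsilon_2)/\eta)\,[\zeta_\eta(\x,\pi)]_{ij}$, so $\|\zeta_\eta(\x^s,\pi)\|_1=\exp(-(\upsilon_1+\upsilon_2)/\eta)\,\|\zeta_\eta(\x,\pi)\|_1=1$ by the first identity above, which is claim (ii). Claim (iii) then follows by plugging into the definition of $\Lcal_\eta$ in \eqref{prob:sub:RALMexp:0:1}: $\Lcal_\eta(\x^s,\pi)=r^\Tsf\alpha+\upsilon_1+c^\Tsf\beta+\upsilon_2+\eta\log 1=r^\Tsf\alpha+c^\Tsf\beta+(\upsilon_1+\upsilon_2)=\Lcal_\eta(\x,\pi)$.

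For claim (iv) I would note that the common scalar factor cancels in the normalization \eqref{equ:pi:xi:kappa}, so $\pi_\eta(\x^s,\pi)=\pi_\eta(\x,\pi)$. Since the three partial gradients of $\Lcal_\eta$ are $\nabla_\alpha\Lcal_\eta(\x,\pi)=r-\pi_\eta(\x,\pi)\1$, $\nabla_\beta\Lcal_\eta(\x,\pi)=c-\pi_\eta(\x,\pi)^\Tsf\1$, and $\grad_U\Lcal_\eta(\x,\pi)=\proj_{\Tcal_U\Gcal}(-2V_{\pi_\eta(\x,\pi)}U)$ (the very expressions used to define $\err_1,\err_2$), each depends on $\x$ only through the fixed vectors $r,c$, the matrix $\pi_\eta(\x,\pi)$, and the unchanged $U$; hence $\nabla_\x\Lcal_\eta(\x^s,\pi)=\nabla_\x\Lcal_\eta(\x,\pi)$ componentwise (which is the meaning of the stated equality, as $\x$ and $\x^s$ are distinct points). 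There is no genuine obstacle here — the whole argument is bookkeeping — so the only point requiring care is making the identities $\upsilon_1\pm\upsilon_2=\cdots$ and the shift-invariance of $\pi_\eta$ explicit before chaining the substitutions.
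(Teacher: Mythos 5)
Your proof is correct and is exactly the direct verification the paper has in mind when it omits the argument ``for brevity since they can be verified easily'': the identities $\upsilon_1+\upsilon_2=\eta\log(\|\zeta_{\eta}(\x,\pi)\|_1)$ and $\upsilon_1-\upsilon_2=c^{\Tsf}\beta-r^{\Tsf}\alpha$, the resulting scalar rescaling of $\zeta_{\eta}$, and the shift-invariance of the normalized matrix $\pi_{\eta}$ are precisely the needed bookkeeping. Nothing is missing.
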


With $\x^k$  in hand, we compute the candidate of the next estimate $\pi^{k+1}$ as 
\be\label{equ:pi:update:ReALM}
\tilde \pi^{k+1} = \zeta_{\eta_k}(\x^k, \pi^k).
\ee
Denote the matrix $W^k \in \Rbb^{n \times n}$ with $W^k_{ij} = \min\{\eta_k \tilde \pi_{ij}^{k+1}, \varphi_{ij}(\x^k)\}$, 
which measures the complementarity violation in \eqref{equ:kkt} \cite{andreani2008augmented}.
 The penalty parameter $\eta_{k+1}$ is updated  according to the progress of  the complementarity violation \cite{andreani2008augmented}, denoted by $\|W^k\|_{\Fsf}$.   If $\|W^k\|_{\Fsf} \leq \gamma_W \|W^{k-1}\|_{\Fsf}$ with $\gamma_W \in (0,1)$, we keep $\eta_{k+1} = \eta_k$ and update $\pi^{k+1} = \tilde \pi^{k+1}$; otherwise we keep $\pi^{k+1} = \pi^k$ and reduce $\eta_{k+1}$ via 
\be\label{equ:eta_k:update} 
\eta_{k+1} = \min\left\{\gamma_{\eta} \eta_{k},  {\varrho_k}/{\|\!\log  \pi^{k+1}\|_{\infty}}\right\} \quad \mbox{with}\quad  \gamma_{\eta} \in (0,1), \quad \varrho_k \to 0.
\ee

We summarize the above discussion as the complete algorithm in  \cref{alg:ReALM}.

\begin{algorithm}[!htbp]
\caption{\ReALM~for solving \eqref{prob:manifold:nlp:2}.}\label{alg:ReALM}
\SetKwInOut{Input}{} 
Choose  $\epsilon_c, \epsilon_1, \epsilon_2, \eta_0 > 0$, $\gamma_W,\gamma_{\eta}, \gamma_{\epsilon}\in (0,1)$. Select $\x^0 \in \Mcal$ with $r^{\Tsf}\alpha^0 = c^{\Tsf}\beta^0$, $\|\zeta(\x^0, \pi^0)\|_1 = 1$  and compute $W^0$ with $W^0_{ij} = \min\{\eta_0,\varphi_{ij}(\x^0)\}$.
Set $k = 1$, $\pi^k = \1 \1^{\Tsf}$, and $\eta_k = \eta_0$.   Choose  $\epsilon_{k,1}  \geq \epsilon_1, \epsilon_{k,2} \geq \epsilon_2$.\;\\
\For{$k = 1, \ldots, $}{
  Compute  an $(\epsilon_{k,1}, \epsilon_{k,2})$-stationary point $\x^k$ of  \eqref{prob:sub:RALMexp:0:1} satisfying \eqref{equ:condition:xk}.  \\
   Compute  $\tilde \pi^{k+1}$ according to \eqref{equ:pi:update:ReALM} and compute $W^k$.\\ 
  \lIf{$\|W^k\|_{\Fsf}  \leq \epsilon_{c}$, $\err_1(\x^k, \pi^k) \leq \epsilon_1$, $\err_2(\x^k, \pi^k) \leq \epsilon_2$}{
 return $\x^k$ and $\pi^{k+1} = \tilde \pi^{k+1}$}
  \leIf{$\|W^k\|_{\Fsf} \leq \gamma_W \|W^{k-1}\|_{\Fsf}$}{set $\pi^{k+1} =  \tilde \pi^{k+1}$, $\eta_{k+1} = \eta_k$;
}
 {set $\pi^{k+1} = \pi^k$ and update $\eta_{k+1}$ via \eqref{equ:eta_k:update}
} Set $\epsilon_{k+1,1} = \gamma_{\epsilon} \epsilon_{k,1}$ and  $\epsilon_{k+1,2} = \gamma_{\epsilon} \epsilon_{k,2}$.
}
\end{algorithm}

\begin{remark}\label{remark:relation:Huang}
First, if we adopt  \RBCD~or \RABCD~to solve a single subproblem \eqref{prob:sub:RALMexp:0:1}  with  $\pi^k = \1\1^{\Tsf}$, $\epsilon_1^k = \epsilon_1$ and $\epsilon_2^k = \epsilon_2$, then \cref{alg:ReALM} reduces to the approach in \cite{huang2021riemannian}, which can be also viewed as a Riemannian exponential penalty approach.    Second, our proposed \ReALM~is a nontrivial extension of the exponential ALM from the Euclidean case \cite{echebest2016convergence} to the Riemannian case.  
The key differences lie in the last requirement in \eqref{equ:condition:xk}  and the scheme for updating the penalty parameter \eqref{equ:eta_k:update}, which are crucial to establish the boundness of $\{\x^k\}$.  The latter is motivated by the corresponding scheme in \cite{lu2012augmented}, wherein a quadratic augmented Lagrangian function is used, and the magnitude of the penalty parameters outgrows that of Lagrange multipliers. 
\end{remark}

\subsection{Convergence analysis of \ReALM}\label{subsection:ReALM:convergence}
\begin{theorem}
Let $\{\x^k\}$ be the sequence generated by \cref{alg:ReALM} with $\epsilon_1 = \epsilon_2 = \epsilon_c =  0$ and $\x^\infty$ be a limit point of $\{\x^k\}$. Then, $(\x^\infty,0)$ is a stationary  point of problem \eqref{prob:manifold:nlp:2}. 
\end{theorem}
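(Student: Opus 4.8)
The plan is to exhibit, at the limit point $\x^\infty=(\alpha^\infty,\beta^\infty,U^\infty)$, a matrix $\bar\pi\in\Pi(r,c)$ witnessing both identities in \eqref{equ:kkt}, and to show in addition that $\varphi(\x^\infty)_{\min}=0$, so that $(\x^\infty,0)$ is exactly the stationary pair appearing in \cref{lem:1stopt}. I would first pass to subsequences: pick $\Kcal$ with $\x^k\to\x^\infty$ along $k\in\Kcal$. Since $\epsilon_1=\epsilon_2=0$ and $\gamma_{\epsilon}\in(0,1)$, the tolerances satisfy $\epsilon_{k,1},\epsilon_{k,2}\downarrow 0$, so $\err_1(\x^k,\pi^k)\to0$ and $\err_2(\x^k,\pi^k)\to0$. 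By the second identity in \eqref{equ:condition:xk} the candidate multiplier $\tilde\pi^{k+1}=\zeta_{\eta_k}(\x^k,\pi^k)$ from \eqref{equ:pi:update:ReALM} equals $\pi_{\eta_k}(\x^k,\pi^k)$ and lies in the compact set of nonnegative matrices of unit $\ell_1$-norm; refining $\Kcal$, assume $\tilde\pi^{k+1}\to\bar\pi$ with $\bar\pi\ge0$, $\|\bar\pi\|_1=1$. Passing to the limit in $\err_2(\x^k,\pi^k)=\|r-\tilde\pi^{k+1}\1\|_1+\|c-(\tilde\pi^{k+1})^{\Tsf}\1\|_1\to0$ gives $\bar\pi\in\Pi(r,c)$, and passing to the limit in $\err_1(\x^k,\pi^k)=\|\proj_{\Tcal_{U^k}\Gcal}(-2V_{\tilde\pi^{k+1}}U^k)\|_{\Fsf}\to0$, using continuity of $U\mapsto\proj_{\Tcal_U\Gcal}$ and $\pi\mapsto V_{\pi}$, gives $\proj_{\Tcal_{U^\infty}\Gcal}(-2V_{\bar\pi}U^\infty)=0$; this is the first identity in \eqref{equ:kkt}.

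It remains to prove $\iprod{\bar\pi}{Z(\x^\infty)}=0$ and $\varphi(\x^\infty)_{\min}=0$, and I would split on the behavior of the nonincreasing sequence $\{\eta_k\}$. If $\eta_k\to\bar\eta>0$, then the penalty is not reduced for all large $k$, so $\|W^k\|_{\Fsf}\le\gamma_W\|W^{k-1}\|_{\Fsf}$ eventually and hence $\|W^k\|_{\Fsf}\to0$; taking the limit in $W^k_{ij}=\min\{\eta_k\tilde\pi^{k+1}_{ij},\varphi_{ij}(\x^k)\}$ yields $\min\{\bar\eta\,\bar\pi_{ij},\varphi_{ij}(\x^\infty)\}=0$ for every $(i,j)$, which forces $\varphi(\x^\infty)\ge0$ and the implication $\bar\pi_{ij}>0\Rightarrow\varphi_{ij}(\x^\infty)=0$; since $\|\bar\pi\|_1=1$ this gives $\varphi(\x^\infty)_{\min}=0$ and then $\iprod{\bar\pi}{Z(\x^\infty)}=\sum_{ij}\bar\pi_{ij}\varphi_{ij}(\x^\infty)=0$. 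If instead $\eta_k\to0$, I would exploit the closed form $\tilde\pi^{k+1}_{ij}=\exp(-(\varphi_{ij}(\x^k)-\eta_k\log\pi^k_{ij})/\eta_k)$ together with $\|\tilde\pi^{k+1}\|_1=1$: the safeguarded rule \eqref{equ:eta_k:update} (whose second branch enforces $\eta_{k+1}\|\log\pi^{k+1}\|_\infty\le\varrho_k\to0$) combined with the boundedness of $\{\x^k\}$ makes $\eta_k\|\log\pi^k\|_\infty\to0$, so the perturbation $\eta_k\log\pi^k$ is asymptotically negligible. Then $\|\tilde\pi^{k+1}\|_1=1$ forces $\varphi(\x^k)_{\min}\le\eta_k(\log n^2+\|\log\pi^k\|_\infty)\to0$, hence $\varphi(\x^\infty)_{\min}\le0$; and $\varphi_{i_0j_0}(\x^\infty)<0$ is impossible since it would make $\tilde\pi^{k+1}_{i_0j_0}\to\infty$ while $\tilde\pi^{k+1}_{i_0j_0}\le1$, so $\varphi(\x^\infty)\ge0$ and $\varphi(\x^\infty)_{\min}=0$. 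Finally, comparing exponents between an index attaining $\varphi(\x^\infty)_{\min}$ and one with $\varphi_{ij}(\x^\infty)>\varphi(\x^\infty)_{\min}$ shows $\tilde\pi^{k+1}_{ij}\to0$ in the latter case, so $\bar\pi$ is supported on $\{(i,j):\varphi_{ij}(\x^\infty)=\varphi(\x^\infty)_{\min}\}$ and again $\iprod{\bar\pi}{Z(\x^\infty)}=0$.

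The step I expect to be the main obstacle is the vanishing-penalty case, specifically controlling the perturbation $\eta_k\log\pi^k$ — equivalently, ensuring that the magnitude of the log-multiplier cannot outgrow $1/\eta_k$. This is precisely the role of the last requirement in \eqref{equ:condition:xk} and of the safeguard $\eta_{k+1}\le\varrho_k/\|\log\pi^{k+1}\|_\infty$ in \eqref{equ:eta_k:update}, working in tandem with the boundedness of $\{\x^k\}$; once that quantitative estimate is secured, the remaining limiting arguments are routine applications of continuity and of elementary properties of the $\min$ operation.
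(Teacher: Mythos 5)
Your proposal is correct and follows essentially the same route as the paper: pass to a convergent subsequence, take limits of the approximate-stationarity residuals to obtain $\bar\pi\in\Pi(r,c)$ and the Riemannian gradient condition, then split on whether $\eta_k$ stays bounded away from zero (using $\|W^k\|_{\Fsf}\to 0$ and the limit of the componentwise $\min$) or vanishes (using the closed form of $\tilde\pi^{k+1}$, the normalization $\|\zeta_{\eta_k}(\x^k,\pi^k)\|_1=1$, and the safeguard in \eqref{equ:eta_k:update}). The one place requiring the same care the paper takes is your claim that $\eta_k\|\!\log \pi^k\|_{\infty}\to 0$ along the whole sequence: the safeguard only guarantees $\eta_{k}\|\!\log \pi^{k}\|_{\infty}\leq \varrho_{k-1}$ at iterations immediately following a penalty reduction (in between, $\pi^k$ is overwritten by $\tilde\pi^k$ while $\eta_k$ is frozen), so in the vanishing-penalty case one should, as the paper does, restrict to that subsequence of post-reduction iterations before taking limits.
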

\begin{proof}
With the first two requirements in \eqref{equ:condition:xk}, we have $\Lcal_{\eta_k}(\x^k, \pi^k)  = 2r^\Tsf \alpha^k$. By the third requirement in  \eqref{equ:condition:xk},  we further have 
\be \label{equ:r:alpha:upperbound} 
r^\Tsf \alpha^k \leq r^\Tsf \alpha^0, \quad \forall~k\geq 0. 
\ee
Besides,  since $\x^k$ is an $(\epsilon_{k,1}, \epsilon_{k,2})$-stationary point  of  \eqref{prob:sub:RALMexp:0:1}, we have
\be  \label{equ:thm:ReALM:convergence:kkt} 
\big\|\proj_{\Tcal_{U^{k}}\Gcal} \left(  -2V_{\tilde \pi^{k+1}}    U^{k} \right) \big\|_{\Fsf} \leq \epsilon_{k,1},
\ 
\|r - \tilde \pi^{k+1}\1\|_1 + \|c - (\tilde \pi^{k+1})^{\Tsf}\1\|_1 \leq \epsilon_{k,2}.
\ee

We next consider two cases. 

Case i). $\eta_k$ is bounded below by  some threshold value $\underline\eta > 0$.  Due to the update rule of $\eta_{k+1}$, we can see that  \eqref{equ:eta_k:update} is invoked only finite times.  Besides, due to $0 < \pi^k_{ij} < 1$, without loss of generality, we assume $\eta_k \equiv \underline \eta$ for all $k\geq 2$ and $\lim_{k \to \infty} \pi_{ij}^k = \pi_{ij}^{\infty}$.  Meanwhile, we have  $\pi_{ij}^k = \tilde \pi_{ij}^k$ for $k \geq 2$ and 
\be  \label{equ:thm:ReALM:convergence:00} 
\big|\!\min\{\underline \eta \pi_{ij}^k, \varphi_{ij}(\x^k)\}\big| \to 0.
\ee
We now show that $\{\x^k\}$ has at least a limit point. By \eqref{equ:thm:ReALM:convergence:00},  we must have $\varphi_{ij}(\x^k) \to 0$ for each  $(i,j)\not \in \Acal(\pi^{\infty}):= \{(i,j) \in [n] \times [n] \mid \pi_{ij}^{\infty} = 0\}$ and there must exist $K_1 > 0$ such that for all $k \geq K_1$, there holds $\varphi_{ij}(\x^k) \geq -1$ for each $(i,j) \in \Acal(\pi^{\infty})$. \rev{Recalling \eqref{equ:hx:def}}, then we can conclude from \eqref{equ:thm:ReALM:convergence:00} that there exists $K_2 > 0$ such that for all $k \geq K_2$, there holds that 
$\alpha_i^k + \beta_j^k + \langle \rev{M_{ij}}, U^k(U^k)^{\Tsf}\rangle \geq - 1$ for each  $(i,j) \in [n] \times [n].$
Multiplying both sides of the above assertions by $c_j$ and then summing the obtained inequality from $j = 1$ to $n$, we have
\be \label{equ:thm:ReALM:convergence:a0} 
\alpha_i^k \geq -1 -  \sum\nolimits_{j} c_j \langle \rev{M_{ij}}, U^k(U^k)^{\Tsf}\rangle - c^{\Tsf}\beta^k \geq -1 - \|C\|_{\infty} -  r^{\Tsf}\alpha^0,
\ee
where the second inequality is due to  $r^{\Tsf} \alpha^k = c^{\Tsf} \beta^k$, \eqref{equ:r:alpha:upperbound},  and   
\be \label{equ:M:UU}
\langle \rev{M_{ij}}, U(U)^{\Tsf}\rangle = [C(U)]_{ij}  \leq  C_{ij} \leq \|C\|_{\infty} \quad \forall~U \in \Gcal.
\ee Combining  \eqref{equ:r:alpha:upperbound} and \eqref{equ:thm:ReALM:convergence:a0}, we further have 
 $-1 - \|C\|_{\infty} -  r^{\Tsf}\alpha^0 \leq \alpha_i^k \leq r_i^{-1} \big( (1 - r_i)(1 + \|C\|_{\infty} +  r^{\Tsf}\alpha^0) + r^{\Tsf}\alpha^0\big).$
 Similarly, we can establish a similar bound for each $\beta_j^k$. Recalling that $U^k$ is in a compact set, it is thus safe to say that the sequence $\{\x^k\}$ has at least one limit point, denoted by $\x^{\infty} = \lim_{k \in \Kcal, k \to \infty} \x^k$.  Again with $\lim_{k \to \infty} \pi_{ij}^k = \pi^{\infty}$, we have from \eqref{equ:thm:ReALM:convergence:00}  that  $\min\{\underline \eta \pi_{ij}^{\infty}, \varphi_{ij}(\x^\infty)\} = 0$, which, together with the fact \rev{$0 \leq \iprod{\pi^{\infty}}{Z(\x^{\infty})}  = \sum_{ij} \iprods{\pi_{ij}^\infty}{\varphi_{ij}(\x^\infty)} - \varphi(\x^\infty)_{\min}$,}
 further implies  $\varphi(\x^\infty)_{\min} = 0$ and $\iprod{\pi^{\infty}}{Z(\x^{\infty})} = 0$. This shows that $(\x^\infty,0)$ is feasible.   Moreover, letting $k \in \cal K$ go to infinity in \eqref{equ:thm:ReALM:convergence:kkt}, we further have $\pi^{\infty} \in \Pi(r,c)$ and $\|\proj_{\Tcal_{U^{\infty}}\Gcal} \left(-2V_{\pi^{\infty}} U^{\infty} \right) \|_{\Fsf}  = 0$.  From \cref{lem:1stopt}, we know that $(\x^\infty,0)$ is a stationary point of problem \eqref{prob:manifold:nlp:2}.

Case ii).  The sequence $\{\eta_k\}$ is not bounded below by any positive number, namely, $\lim_{k \to \infty} \eta_k = 0$. By the updating rule, we know that $\eta_k$ is updated via \eqref{equ:eta_k:update} infinitely many times. Hence, there must exist $k_1 < k_2 < \cdots $ such that $\eta_{k_\ell} \to 0$ as $\ell \to \infty$ and 
$\eta_s = \eta_{k_\ell} =  \min\big\{\gamma_{\eta} \eta_{k_\ell - 1},  {\varrho_{k_\ell}}/{|\log \pi_{\min}^{k_\ell}|}\big\}$  with 
$k_\ell \leq s < k_{\ell+1}$.
By \eqref{equ:zeta}  and the second assertion in \eqref{equ:condition:xk}, we have $\pi_{ij}^k \exp(-\varphi_{ij}(\x^k)/\eta_k)\leq 1$, which, together  with $0 < \pi_{ij}^k < 1$ \rev{and \eqref{equ:eta_k:update}}, implies 
\be\label{equ:thm:ReALM:convergence:d0} 
\varphi_{ij}(\x^{k_\ell}) \geq - \eta_{k_\ell} |\!\log \pi_{ij}^{k_\ell}|   \geq - \eta_{k_\ell} \rev{\|\!\log  \pi^{k_\ell}\|_{\infty}} \geq  - \varrho_{k_\ell},  \quad   \forall~\ell \geq 1.  
\ee
Using the same arguments as in the proof in Case i), we can show that $\{\x^k\}$ is bounded over  $\{k_1, k_2, \ldots\}$ and thus $\{\x^k\}$ has at least a limit point. Without loss of generality,  assume  
$\lim_{l \to \infty} \x^{k_\ell} = \x^\infty$.  By \eqref{equ:thm:ReALM:convergence:d0} and $\varrho_k \to 0$ in \eqref{equ:eta_k:update}, we  know that  $(\x^{\infty},0)$ is feasible to problem \eqref{prob:manifold:nlp:2}.  
Due to the compactness of $\tilde \pi^k$ in \eqref{equ:pi:update:ReALM}, there must exist a subset $\Kcal \subseteq \{k_1, k_2, \ldots\}$ such that $\lim_{k \in \Kcal, k \to \infty} \tilde \pi_{ij}^{k} = 
\tilde \pi^{\infty}$. Recalling  \eqref{equ:pi:update:ReALM}, we have 
$\tilde \pi_{ij}^{k+1} = \pi_{ij}^{k} \exp(-\varphi_{ij}(\x^{k})/\eta_{k})$ for  each $k \geq 1.$
Let $\Acal(\x^\infty)= \{(i,j) \in \rev{[n]\times [n]} \mid  \varphi_{ij}(\x^{\infty}) = 0\}$. We claim  $\Acal(\x^\infty) \neq \emptyset$. Otherwise, for every $(i,j)$ we have $\varphi_{ij}(\x^{\infty}) > 0$ and thus  
\be  \label{equ:thm:ReALM:convergence:d1} 
0 \leq \lim_{k \in \Kcal, k \to \infty} \tilde \pi_{ij}^{k+1}  \leq  \lim_{k \in \Kcal, k \to \infty} \rev{\exp\left(-\frac{\varphi_{ij}(\x^{k})}{\eta_{k}}\right)} = 0,
\ee
where the equality uses $\lim_{\ell \to \infty} \x^{k_\ell} = \x^\infty$, $\Kcal \subseteq \{k_1, k_2, \ldots\}$\rev{, and} $\lim_{\ell \to \infty} \eta_{k_\ell} \to 0$.  This makes a contradiction with $\|\pi^{k+1}\|_1 = 1$.  Moreover, \eqref{equ:thm:ReALM:convergence:d1}  also further implies that 
$\iprod{\tilde \pi^{\infty}}{Z(\x^{\infty})} = 0$. 
Finally, with \eqref{equ:thm:ReALM:convergence:kkt}, we further have $\|\proj_{\Tcal_{U^{\infty}}\Gcal}(-2V_{\tilde \pi^{\infty}}    U^{\infty})\|_{\Fsf}$\!\!\\ $ = 0$ and $\tilde \pi^{\infty} \in \Pi(r,c)$.  Putting the above things together, we know that that $(\x^\infty,0)$ is a stationary point of  \eqref{prob:manifold:nlp:2}.  The proof is completed. 
\end{proof}

\section{An \iRGD~framework for solving the subproblem \eqref{prob:sub:RALMexp:0:1}}\label{sec:subproblem}
For ease of reference, we represent  subproblem \eqref{prob:sub:RALMexp:0:1} by removing the subscript/superscript $k$ and replacing $\pi^k$ as $\kappa$ to avoid possible misleading:
\be \label{prob:sub:RALMexp}  
\min_{\x  \in \Mcal}\, \left\{\Lcal_{\eta}(\x, \kappa):=  r^{\Tsf} \alpha + c^{\Tsf} \beta + \eta \log\left(\|\zeta_{\eta}(\x,\kappa)\|_1\right)\right\},
\ee
where the parameter $\eta > 0$ and the matrix  $\kappa\in \Rbb^{n \times n}_{++}$. 
Throughout this and next sections, we also denote $\Lcal_{\eta}(\x, \kappa)$, $\zeta_{\eta}(\x,\kappa)$,  $\pi_{\eta}(\x, \kappa)$, $\err_1(\x, \kappa)$, and $\err_2(\x,\kappa)$  as $\Lcal(\x)$, $\zeta(\x)$,  $\pi(\x)$, $\err_1(\x)$, and $\err_2(\x)$ for short, respectively. 

At first glance, problem \eqref{prob:sub:RALMexp} is a  three-block optimization problem and can be efficiently solved by the \RABCDv~method   proposed by \cite{huang2021riemannian}. However, as stated therein,   tuning the stepsize for updating  $U$ is not easy for \RABCDv. In sharp contrast, we understand  \eqref{prob:sub:RALMexp}   as optimization with only one variable $U$ and propose a framework of inexact Riemannian gradient descent  (\iRGD)  methods, which facilitates us to choose the stepsize of updating the variable $U$ adaptively and efficiently.

Problem \eqref{prob:sub:RALMexp}  can be seen as follows:
$$  
\min_{U\in\Gcal}\, \left\{q(U):= \min_{\alpha\in \Rbb^n, \beta \in \Rbb^n} \Lcal(\x)\right\}.
$$
To show the smoothness of $q(\cdot)$, we define 
\be\label{equ:h:pi:U} 
h(\pi, U)=  \iprod{C(U) - \eta \log \kappa}{\pi} - \eta H(\pi).
\ee  
It is easy to see that  
\be\label{equ:h:L:primal:dual} 
\max_{\alpha\in\Rbb^n, \beta \in \Rbb^n} -\Lcal(\x)\quad  \mbox{is the dual formulation of} \quad \min_{\pi \in \Pi(r,c)}  h(\pi, U).
\ee
  Therefore, we can write 
\be \label{equ:qU:2}
q(U)=  - \min_{\pi \in \Pi(r,c)} h(\pi, U).
\ee
Since the entropy function is 1-strongly convex with respect to the $\ell_1$-norm over the probability simplex,   the minimization problem in \eqref{equ:qU:2} has a unique solution. We have the following proposition \cite[Lemma 3.1]{lin2020projection}.
\begin{proposition} \label{proposition:grad:Lin}
The function $q(\cdot)$ in \eqref{equ:qU:2} is differentiable over $\Rbb^{d\times k}$ and $\grad q(U)    = \proj_{\Tcal_{U}\Gcal}\big(-2 V_{\pi^*_U} U\big)$ with $\pi^{*}_U = \argmin_{\pi \in \Pi(r,c)} \  h(\pi, U)$. 
\end{proposition}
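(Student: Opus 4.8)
The plan is to obtain this from Danskin's theorem (the envelope theorem for a parametrized minimization) combined with the explicit form of $\nabla_U h(\pi,U)$; the statement is exactly \cite[Lemma~3.1]{lin2020projection}, and I sketch a self-contained route. Write $g(U):=\min_{\pi\in\Pi(r,c)}h(\pi,U)$, so that $q(U)=-g(U)$ by \eqref{equ:qU:2}. \emph{Step~1: the inner minimizer is unique, positive, and continuous in $U$.} The set $\Pi(r,c)$ is nonempty, convex and compact, and $h(\cdot,U)$ is continuous and strongly convex on it, since $\iprod{C(U)-\eta\log\kappa}{\cdot}$ is affine while $-\eta H$ is $\eta$-strongly convex with respect to $\|\cdot\|_1$ over the probability simplex. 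Hence $\pi^*_U:=\argmin_{\pi\in\Pi(r,c)}h(\pi,U)$ exists, is unique, and lies in $\Rbb^{n\times n}_{++}$ (as is standard for entropically regularized optimal transport), where $h$ is $C^\infty$ in $(\pi,U)$. Strong convexity at the minimizer gives $h(\pi^*_{U'},U)\ge h(\pi^*_U,U)+\frac{\eta}{2}\|\pi^*_{U'}-\pi^*_U\|_1^2$, and optimality of $\pi^*_{U'}$ at $U'$ gives $h(\pi^*_{U'},U')\le h(\pi^*_U,U')$; since $h(\pi,\cdot)$ is Lipschitz uniformly over $\pi\in\Pi(r,c)$ on bounded sets (because $\nabla_U h(\pi,U)=2V_\pi U$ is bounded there, $\pi$ being a transport plan), combining the above inequalities yields $\frac{\eta}{2}\|\pi^*_{U'}-\pi^*_U\|_1^2\le 2L\|U-U'\|_{\Fsf}$ locally, so $U\mapsto\pi^*_U$ is continuous (indeed locally $1/2$-H\"older).

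\emph{Step~2: envelope argument.} Fix $U$ and a direction $\xi\in\Rbb^{d\times k}$. For $t>0$, the bound $g(U+t\xi)\le h(\pi^*_U,U+t\xi)$ together with the smoothness of $h(\pi^*_U,\cdot)$ gives $\limsup_{t\downarrow0}\big(g(U+t\xi)-g(U)\big)/t\le\iprod{\nabla_Uh(\pi^*_U,U)}{\xi}$. Conversely, $g(U+t\xi)=h(\pi^*_{U+t\xi},U+t\xi)\ge h(\pi^*_{U+t\xi},U)+t\iprod{\nabla_Uh(\pi^*_{U+t\xi},U)}{\xi}+o(t)\ge g(U)+t\iprod{\nabla_Uh(\pi^*_{U+t\xi},U)}{\xi}+o(t)$, the last inequality using that $\pi^*_U$ minimizes $h(\cdot,U)$ over $\Pi(r,c)$; since $\pi^*_{U+t\xi}\to\pi^*_U$ by Step~1 and $\nabla_Uh$ is continuous, $\iprod{\nabla_Uh(\pi^*_{U+t\xi},U)}{\xi}=\iprod{\nabla_Uh(\pi^*_U,U)}{\xi}+o(1)$, and hence $\liminf_{t\downarrow0}\big(g(U+t\xi)-g(U)\big)/t\ge\iprod{\nabla_Uh(\pi^*_U,U)}{\xi}$. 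Thus $g$ has directional derivative $\xi\mapsto\iprod{\nabla_Uh(\pi^*_U,U)}{\xi}$, which is linear in $\xi$ and continuous in $U$ (Step~1 again), so $g$, and therefore $q=-g$, is continuously differentiable on $\Rbb^{d\times k}$ with $\nabla q(U)=-\nabla_Uh(\pi^*_U,U)$.

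\emph{Step~3: identify the gradient and project onto $\Tcal_U\Gcal$.} In $h(\pi,U)$ only the term $\iprod{C(U)}{\pi}=\sum_{ij}\pi_{ij}\iprod{M_{ij}}{UU^{\Tsf}}=\iprod{V_\pi}{UU^{\Tsf}}=\tr(U^{\Tsf}V_\pi U)$ depends on $U$; each $M_{ij}=(x_i-y_j)(x_i-y_j)^{\Tsf}$ is symmetric, so $V_\pi$ is symmetric and $\nabla_Uh(\pi,U)=2V_\pi U$, which gives $\nabla q(U)=-2V_{\pi^*_U}U$. Since $U^{\Tsf}\nabla q(U)=-2U^{\Tsf}V_{\pi^*_U}U$ is symmetric, the standard expression for the Riemannian gradient on the Stiefel manifold (recalled in the preliminaries) gives $\grad q(U)=\proj_{\Tcal_U\Gcal}\big(\nabla q(U)\big)=\proj_{\Tcal_U\Gcal}\big(-2V_{\pi^*_U}U\big)$, as claimed.

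The main obstacle is Step~2: the envelope argument stands or falls on the continuity of the selection $U\mapsto\pi^*_U$ from Step~1, and that continuity relies essentially on the strong convexity injected by the entropic regularizer $-\eta H$---without it the inner minimizer need not be unique and $q$ would in general be only directionally differentiable rather than $C^1$.
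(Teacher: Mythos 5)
Your Danskin-type argument is correct: the strong-convexity-based uniqueness and local H\"older continuity of $U\mapsto\pi^*_U$, the two-sided envelope bounds (whose $o(t)$ terms are uniform over $\Pi(r,c)$ because $h(\pi,\cdot)$ is quadratic in $U$ with $V_\pi$ uniformly bounded), and the identification $\nabla_U h(\pi,U)=2V_\pi U$ with $V_\pi$ symmetric all check out. The paper does not prove this proposition itself but imports it from \cite[Lemma 3.1]{lin2020projection}, recording only the uniqueness observation, and the cited proof follows the same envelope-theorem route, so your write-up is essentially a self-contained version of the same approach.
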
 

Let  
\be \label{equ:alpha:beta:U*}
(\alpha^{*}_U, \beta^{*}_U) \in \argmin_{\alpha\in \Rbb^n, \beta \in \Rbb^n} \Lcal(\x).
\ee
We can  characterize the connection between $\pi^{*}_U$ and  $(\alpha^{*}_U, \beta^{*}_U)$ as follows, which gives a new formulation of $\grad q(U)$ and provides  more insights into approximating  $\grad q(U)$.\!  
\begin{lemma}  \label{lemma:grad:qU}
Let $(\alpha^{*}_U, \beta^{*}_U)$ be given in \eqref{equ:alpha:beta:U*}. 
Then there holds that  $\pi^*_U = \pi(\x_U^*)$ with $\x_U^* = (\alpha_U^*, \beta_U^*, U)$, \rev{where $\pi(\cdot)$ is defined in \eqref{equ:pi:xi:kappa} and $\pi^{*}_U$ is defined in \cref{proposition:grad:Lin}} and hence
\be \label{equ:qU:gradqU} 
\quad \grad q(U) = \grad_U \Lcal(\x_U^*) = \proj_{\Tcal_{U}\Gcal}\big(-2 V_{\pi(\x^*_U)} U\big).
\ee
\end{lemma}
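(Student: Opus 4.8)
The plan is to combine the primal--dual relationship \eqref{equ:h:L:primal:dual} with the fact, noted after \eqref{equ:qU:2}, that the minimizer of $h(\cdot,U)$ over $\Pi(r,c)$ is unique (by strong convexity of $-H$ on the simplex). First I would use that $(\alpha_U^*,\beta_U^*)$ is an unconstrained minimizer of the smooth function $\Lcal(\cdot,\cdot,U)$ on $\Rbb^n\times\Rbb^n$, hence a stationary point: $\nabla_\alpha\Lcal(\x_U^*)=\nabla_\beta\Lcal(\x_U^*)=0$. Invoking the identities $\nabla_\alpha\Lcal(\x)=r-\pi(\x)\1$ and $\nabla_\beta\Lcal(\x)=c-\pi(\x)^\Tsf\1$ recorded before \cref{lem:1stopt:subproblem}, this forces $\pi(\x_U^*)\1=r$ and $\pi(\x_U^*)^\Tsf\1=c$. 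Since $\kappa\in\Rbb_{++}^{n\times n}$, every entry of $\zeta_\eta(\x_U^*,\kappa)$, and hence of $\pi(\x_U^*)$, is strictly positive, while $\|\pi(\x_U^*)\|_1=1$ by \eqref{equ:pi:xi:kappa}; therefore $\pi(\x_U^*)\in\Pi(r,c)$.

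Next I would show that this feasible $\pi(\x_U^*)$ is in fact the solution of $\min_{\pi\in\Pi(r,c)}h(\pi,U)$, i.e.\ equals $\pi_U^*$. This minimization is convex and admits a strictly feasible point (e.g.\ $rc^\Tsf$), so a feasible $\pi$ is optimal iff it satisfies the KKT stationarity condition; moreover, by \eqref{equ:h:pi:U} the term $-\eta H(\pi)$ makes the $\pi\ge 0$ constraints inactive at any optimum. Forming the Lagrangian of $\min_{\pi\in\Pi(r,c)}h(\pi,U)$ with multipliers for $\pi\1=r$ and $\pi^\Tsf\1=c$ and zeroing its $\pi$-gradient shows that every stationary point has the Gibbs form $\pi_{ij}\propto\kappa_{ij}\exp(-([C(U)]_{ij}+\lambda_i+\mu_j)/\eta)$ for some $\lambda,\mu\in\Rbb^n$. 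By \eqref{equ:zeta}--\eqref{equ:pi:xi:kappa} and $\varphi_{ij}(\x)=\alpha_i+\beta_j+[C(U)]_{ij}$, the matrix $\pi(\x_U^*)$ has exactly this form (with $\lambda=\alpha_U^*$ and $\mu$ obtained from $\beta_U^*$ by a constant shift absorbing the normalization), and it is feasible by the first paragraph; hence $\pi(\x_U^*)=\pi_U^*$. An equivalent route is strong duality in \eqref{equ:h:L:primal:dual}: the dual maximizer $(\alpha_U^*,\beta_U^*)$ recovers the unique primal minimizer as the minimizer in $\pi$ of the Lagrangian, which is precisely $\pi(\x_U^*)$.

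Finally, with $\pi_U^*=\pi(\x_U^*)$ in hand, \cref{proposition:grad:Lin} yields $\grad q(U)=\proj_{\Tcal_U\Gcal}(-2V_{\pi(\x_U^*)}U)$, the last equality of \eqref{equ:qU:gradqU}. Comparing this with the expression $\grad_U\Lcal(\x)=\proj_{\Tcal_U\Gcal}(-2V_{\pi(\x)}U)$ used in the definition of $\err_1$ (a chain-rule computation based on $\partial\langle M_{ij},UU^\Tsf\rangle/\partial U=2M_{ij}U$), evaluated at $\x_U^*$, gives $\grad_U\Lcal(\x_U^*)=\grad q(U)$, completing the chain in \eqref{equ:qU:gradqU}.

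The hard part is the middle step. One must make sure the dual problem in \eqref{equ:alpha:beta:U*}/\eqref{equ:h:L:primal:dual} actually attains its maximum, so that $\x_U^*$ is well defined, and that the Gibbs-form candidate lands in $\Pi(r,c)$ rather than merely in $\{\pi\ge 0:\|\pi\|_1=1\}$; both are consequences of the stationarity identities $\nabla_\alpha\Lcal=\nabla_\beta\Lcal=0$ from the first paragraph, but they should be stated explicitly. The remaining computations (the gradient formulas and the KKT matching) are routine.
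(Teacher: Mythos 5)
Your proposal is correct, and its skeleton matches the paper's: both first use the stationarity of $(\alpha_U^*,\beta_U^*)$ (i.e.\ $\nabla_\alpha\Lcal(\x_U^*)=\nabla_\beta\Lcal(\x_U^*)=0$) to conclude $\pi(\x_U^*)\in\Pi(r,c)$, and both finish by invoking \cref{proposition:grad:Lin}. Where you diverge is the identification step $\pi(\x_U^*)=\pi_U^*$. The paper does this by a value comparison: it uses the algebraic identity \eqref{equ:CU:alpha:beta} to compute $h(\pi(\x_U^*),U)=-\Lcal(\x_U^*)$ directly, then invokes weak duality from \eqref{equ:h:L:primal:dual} to get $-\Lcal(\x_U^*)\le h(\pi_U^*,U)$, so the feasible point $\pi(\x_U^*)$ attains the optimal value and uniqueness does the rest. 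You instead verify the primal KKT system of $\min_{\pi\in\Pi(r,c)}h(\pi,U)$: every stationary point has Gibbs form, the entropy term keeps the nonnegativity constraints slack, and $\pi(\x_U^*)$ has exactly that form with multipliers read off from $(\alpha_U^*,\beta_U^*)$ up to a normalizing shift. Both arguments are sound for this convex problem with affine constraints. The paper's route is marginally leaner---it needs only weak duality plus one function-value computation, and never has to discuss constraint qualifications or the inactivity of $\pi\ge 0$---whereas your KKT route makes the Gibbs structure of the optimizer explicit, which is arguably more informative. Your closing caveats (attainment of the max in \eqref{equ:alpha:beta:U*}, and that the Gibbs candidate must be shown to lie in $\Pi(r,c)$ rather than merely in the normalized nonnegative cone) are exactly the right points to flag; the paper handles the latter the same way you do and, like you, takes attainment as given by the hypothesis of the lemma.
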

\begin{proof}
By \eqref{equ:hx:def}\rev{, \eqref{equ:zeta},}  and \eqref{equ:pi:xi:kappa}, \rev{recalling $[C(U)]_{ij} = \langle M_{ij}, UU^\Tsf\rangle$,} we have
\be \label{equ:CU:alpha:beta}
[C(U)]_{ij} + \eta \log [\rev{\pi(\x_U^*)}]_{ij} - \eta \log \kappa_{ij} =  - [\alpha^*_U]_i -  [\beta^*_U]_j - \eta  \log (\|\zeta(\x_U^*)\|_1).
\ee
By the optimality  of $\alpha^{*}_U$  and  $\beta^{*}_U$, from \cref{lem:1stopt:subproblem}, we know   $\pi(\x_U^*)\1 = r$ and $\pi(\x_U^*)^{\Tsf}\1 = c$.  With \eqref{equ:CU:alpha:beta} and \eqref{equ:h:pi:U}, by some  calculations, we have  
$h\left(\pi(\x_U^*), U\right)   = -r^{\Tsf} \alpha^*_U - c^{\Tsf} \beta^*_U  - \eta \log (\|\zeta(\x_U^*)\|_1)  
=  -\Lcal(\x_U^*).$ By the dual formulation \eqref{equ:h:L:primal:dual}, we have  $-\Lcal(\x_U^*)$ $\leq h(\pi_U^*,U)$. Hence, we know $h\left(\pi(\x_U^*), U\right)  \leq  h(\pi_U^*,U)$, which, together with the optimality and uniqueness of $\pi_U^*$, yields $\pi(\x_U^*) = \pi_U^*$.  Finally, using \cref{proposition:grad:Lin}, we arrive at \eqref{equ:qU:gradqU}.  The proof is completed. 
\end{proof}

Hence we could use the Riemannian gradient descent (RGD) method \cite{boumal2019global} to solve \eqref{prob:sub:RALMexp}.  The main iterations are given as 
\be\label{equ:RBB:exact} 
U^{t+1} = \Retr_{U^t}\left(-\tau_t \grad q(U^t)\right),
\ee
where the stepsize $\tau_t>0$ is chosen such that  the sufficient descent evaluated by $q(U^{t+1})$ holds.  The initial guess of $\tau_t$ can be  taken as the so-called BB stepsize  \cite{wen2013feasible}, while it can also be chosen as the constant stepsize, namely, $\tau_t \equiv \tau \in (0, 2/L)$ with 
\be\label{equ:L} 
L =   2 (L_1^2 + L_2) \|C\|_{\infty} + 4L_1^2\|C\|_{\infty}^2/{\eta},
\ee where $L_1$ and $L_2$  appear in \cref{lemma:retraction:property}. 
 One can refer to \cite{boumal2019global,lin2020projection} for more details. 
Although RGD \eqref{equ:RBB:exact}  with the BB stepsize is efficient in practice,  it needs to calculate $\pi^*_{U^t}$ exactly, which can be challenging (or might be unnecessary) to do. 

Motivated by the well-established inexact gradient methods for solving optimization with noise \cite{carter1991global,devolder2014first,berahas2021global,shi2022noise} and the expression \eqref{equ:qU:gradqU}, 
we consider to  use $\grad_U \Lcal(\x^t) = \proj_{\Tcal_{U^t}\Gcal}\left(-2 V_{\pi^t}U^t\right)$, the gradient information of $\Lcal(\x^t)$ with $\x^t = (\alpha^t, \beta^t, U^t)$,  to approximate $\grad q(U^t)$, \rev{where $\pi^t$ is an approximation of $\pi^*_{U^t}$.} Here, $(\alpha^t, \beta^t) \approx
\min_{\alpha\in \Rbb^n, \beta \in \Rbb^n} \Lcal(\alpha, \beta, U^t)$
satisfying
 $\err_2(\x^t) \leq \theta_t$, where $\theta_t \geq 0$ is a preselected tolerance. Given the inexactness parameter $\theta_{t+1}\geq 0$ and the stepsize $\tau_t \geq 0$, our framework of \iRGD~updates 
$\x^{t+1} = (\alpha^{t+1},\beta^{t+1}, U^{t+1})$ via the following inexact Riemannian gradient oracle (\texttt{iRGO}):
 \tcbset{notitle, width=1.0\textwidth,top=-4mm,bottom=0mm,opacityframe=0.8}
\begin{tcolorbox} 
\begin{subequations}\label{equ:iRGD}
\be \label{equ:iRGO}
\x^{t+1} = \texttt{iRGO}(\x^t, \tau_t, \theta_{t+1}),  \quad \forall~t \geq 0
\ee
with $U^{t+1}$ and $(\alpha^{t+1}, \beta^{t+1}) \approx \argmin_{\alpha\in \Rbb^n, \beta \in \Rbb^n} \Lcal(\alpha, \beta, U^{t+1}) $ satisfying
\begin{align}
& U^{t+1} = \Retr_{U^t}\left(-\tau_t  \xi^t\right)~\mbox{with}~\xi^t = \grad_U \Lcal(\x^t), & \mbox{{\small(inexact RGD step)}}\hspace{-4mm} \label{equ:RBB:inexact} \\
& \err_2(\x^{t+1}) \leq \theta_{t+1}.  & \mbox{{\small (inexactness criterion)}}\hspace{-4mm}   \label{equ:inexact:grad:cond}
\end{align}
\end{subequations}
\end{tcolorbox}
\begin{remark}
If $\theta_t \equiv 0$ in \eqref{equ:inexact:grad:cond}, then $(\alpha^t, \beta^t) \in \argmin_{\alpha\in \Rbb^n, \beta \in \Rbb^n} \Lcal(\alpha, \beta, U^t)$, which with \cref{lemma:grad:qU} tells that  $\grad q(U^t) = \grad_U \Lcal(\x^t)$. This means that we perform a Riemannian gradient descent step with the exact gradient information.
\end{remark}

Given  $\rho \geq 0$, define the potential function as 
\be \label{equ:potential:function}
E_{\rho}(\x^t) = \Lcal(\x^{t}) + \rho  (\err_2(\x^{t}))^2,  
\ee 
We need to make further assumptions to establish the convergence of \iRGD~\eqref{equ:iRGD}. 
\begin{assumption} \label{assump:sufficient:decrease}
i) There exist $\rho \geq 0$,  $\delta_1 > 0$,  $\delta_2 \geq 0$, and $\{\omega_t\}$ such that   $\Omega:= \sum_{t = 0}^{+\infty} \omega_t < +\infty$ and 
\be  \label{equ:ls:new:general}
E_{\rho}(\x^{t+1})    \leq  E_{\rho}(\x^t) - \delta_1 \tau_t \|\xi^t\|^2_{\Fsf}  - \delta_2 (\err_2(\x^{t+1}))^2 + \omega_{t}    
\ee
holds for all $t \geq 0$.
ii) There exist $\underline \tau > 0$ such that $\tau_t \geq \underline \tau$ for all $t \geq 0$. 
\end{assumption}

 Let $\x^* = (\alpha^*, \beta^*, U^*)$ be the minimizer of \cref{prob:sub:RALMexp}.
Using a similar argument as in \cite[Lemma 4.7]{huang2021riemannian}, by \eqref{equ:M:UU} and $- \log \kappa_{ij}  \leq  \|\!\log \kappa\|_{\infty}$, 
 we have 
\be \label{equ:Lcal:opt}
\Lcal(\x^*)  \geq- \|C\|_{\infty} - \eta \|\!\log \kappa\|_{\infty}.  
\ee
Let $\widetilde \Upsilon = E_{\rho+ \delta_2}(\x^0)  +  \Omega +  \|C\|_{\infty} + \eta \|\!\log \kappa\|_{\infty}$. 
We define  
\be \label{equ:Upsilon}
\Upsilon = 
\begin{cases}
{\widetilde \Upsilon}/{\min\{\delta_1 \underline \tau, \delta_2\}}, & \mbox{if}\ \delta_2 > 0,\\
\widetilde \Upsilon/(\delta_1 \underline \tau) + \Theta, & \mbox{otherwise}
\end{cases}
\ee
with  $\Theta= \sum_{t = 0}^{+\infty} \theta_t$. 
Now, we are ready to discuss the convergence of \iRGD~\eqref{equ:iRGD}.

\begin{theorem} \label{theorem:convergence}
Let $\{\x^t\}$ be the sequence generated by \iRGD~\eqref{equ:iRGD}. 
 Suppose that \cref{assump:sufficient:decrease} holds with $\delta_2 > 0$ or \cref{assump:sufficient:decrease} holds with $\delta_2  = 0$ and $\Theta < +\infty$. 
Then  
 $\err_1(\x^t) \to 0$ and $\err_2(\x^t) \to 0.$
Moreover,  given  any $\epsilon_1 > 0$ and $\epsilon_2 > 0$, \iRGD~can return a point $\x^t$ with $\err_1(\x^t) \leq \epsilon_1$ and $\err_2(\x^t) \leq \epsilon_2$ in  $\lceil \Upsilon \max\{\epsilon_1^{-2},  \epsilon_2^{-2}\}\rceil$ iterations, where $\Upsilon$ is defined in \eqref{equ:Upsilon}.
\end{theorem}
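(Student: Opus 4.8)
The plan is to turn the per-iteration estimate \eqref{equ:ls:new:general} into a single summable bound on the two stationarity measures $\err_1$ and $\err_2$, and then extract from that bound both the asymptotic claim and the iteration complexity. First I would record that, by the definition of the oracle, $\xi^t=\grad_U\Lcal(\x^t)$, so $\|\xi^t\|_{\Fsf}=\err_1(\x^t)$ and, using $\tau_t\ge\underline{\tau}$, inequality \eqref{equ:ls:new:general} reads $E_\rho(\x^{t+1})\le E_\rho(\x^t)-\delta_1\underline{\tau}(\err_1(\x^t))^2-\delta_2(\err_2(\x^{t+1}))^2+\omega_t$. Summing this from $t=0$ to $T-1$, the $E_\rho$ terms telescope; bounding $-E_\rho(\x^T)\le -\Lcal(\x^T)\le -\Lcal(\x^*)\le \|C\|_\infty+\eta\|\!\log\kappa\|_\infty$ via \eqref{equ:Lcal:opt}, bounding $E_\rho(\x^0)\le E_{\rho+\delta_2}(\x^0)$ (since $\delta_2\ge 0$), and using $\sum_{t<T}\omega_t\le\Omega$, I obtain, for every $T$,
$\sum_{t=0}^{T-1}\big(\delta_1\underline{\tau}(\err_1(\x^t))^2+\delta_2(\err_2(\x^{t+1}))^2\big)\le\widetilde\Upsilon$, hence the same bound for the infinite series.

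From summability of $(\err_1(\x^t))^2$ together with $\delta_1\underline{\tau}>0$ I get $\err_1(\x^t)\to 0$. For $\err_2(\x^t)\to 0$ the argument splits: if $\delta_2>0$, summability of $(\err_2(\x^{t+1}))^2$ gives it at once; if $\delta_2=0$, I instead invoke the inexactness criterion \eqref{equ:inexact:grad:cond}, $\err_2(\x^{t+1})\le\theta_{t+1}$, together with $\Theta=\sum_t\theta_t<+\infty$, so $\theta_t\to 0$. This settles the asymptotic statement.

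For the complexity I combine the two measures into one. When $\delta_2>0$, factoring out $\min\{\delta_1\underline{\tau},\delta_2\}$ from the telescoped bound gives $\sum_{t=0}^{T-1}\big((\err_1(\x^t))^2+(\err_2(\x^{t+1}))^2\big)\le\widetilde\Upsilon/\min\{\delta_1\underline{\tau},\delta_2\}=\Upsilon$; when $\delta_2=0$, the $\err_1$-part gives $\sum_{t=0}^{T-1}(\err_1(\x^t))^2\le\widetilde\Upsilon/(\delta_1\underline{\tau})$, while the inexactness criterion together with $\theta_t^2\le\theta_t$ gives $\sum_{t=0}^{T-1}(\err_2(\x^{t+1}))^2\le\Theta$, so again $\sum_{t=0}^{T-1}\big((\err_1(\x^t))^2+(\err_2(\x^{t+1}))^2\big)\le\Upsilon$ with $\Upsilon$ exactly as in \eqref{equ:Upsilon}. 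Reindexing the $\err_2$-sum so that both measures are evaluated at the same iterate, an averaging (pigeonhole) argument over the first $\lceil\Upsilon\max\{\epsilon_1^{-2},\epsilon_2^{-2}\}\rceil$ iterations yields an index $t$ with $(\err_1(\x^t))^2+(\err_2(\x^t))^2\le\Upsilon/\lceil\Upsilon\max\{\epsilon_1^{-2},\epsilon_2^{-2}\}\rceil\le\min\{\epsilon_1^2,\epsilon_2^2\}$, i.e.\ $\err_1(\x^t)\le\epsilon_1$ and $\err_2(\x^t)\le\epsilon_2$, which is the desired bound.

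I expect the main care to be needed in two places: (i) the bookkeeping that reconciles $\err_1$ evaluated at $\x^t$ with $\err_2$ evaluated at $\x^{t+1}$ in the descent estimate, so that one single returned iterate certifies both tolerances; and (ii) in the $\delta_2=0$ branch, passing quantitatively from the merely summable inexactness levels $\{\theta_t\}$ to the iteration count — this is precisely what forces the extra additive term $\Theta$ in the definition \eqref{equ:Upsilon} of $\Upsilon$, and where a mild normalization such as $\theta_t\le 1$ is used. The telescoping step and the lower bound \eqref{equ:Lcal:opt} are routine given the estimates already assembled.
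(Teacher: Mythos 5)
Your proposal is correct and follows essentially the same route as the paper: telescope the descent estimate \eqref{equ:ls:new:general}, absorb the boundary terms via $E_{\rho+\delta_2}(\x^0)$ and the lower bound \eqref{equ:Lcal:opt} to obtain $\sum_t\big((\err_1(\x^t))^2+(\err_2(\x^t))^2\big)\le\Upsilon$, and conclude by summability together with a pigeonhole argument. The only cosmetic differences are that the paper phrases the complexity step contrapositively (via the first iterate at which both tolerances hold), and that, as you note, the $\delta_2=0$ branch implicitly uses $\theta_t^2\le\theta_t$ to bound the $\err_2$-sum by $\Theta$ — a normalization the paper also relies on without stating it.
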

\begin{proof}
First, since \cref{assump:sufficient:decrease} holds,  by \eqref{equ:potential:function} and  $\|\xi^t\|_{\Fsf} =  \err_1(\x^{t})$, we have 
\[
\delta_1 \underline \tau  (\err_1(\x^t))^2  + \delta_2 (\err_2(\x^{t+1}))^2 \leq \Lcal(\x^t) - \Lcal(\x^{t+1}) +  \rho((\err_2(\x^{t}))^2 - (\err_2(\x^{t+1}))^2) + \omega_t.
\]
Summing the above inequalities over $t = 0, \ldots, T$ and  adding the term $\delta_2(\err_2(\x^0))^2$ on both sides of the obtained equation, and then combining them with  $\Lcal(\x^{T+1})\geq \Lcal(\x^*)$ and \eqref{equ:Lcal:opt}, we  have 
$\sum_{t=0}^T  \left( \delta_1 \underline \tau (\err_1(\x^t))^2 +  \delta_2 (\err_2(\x^{t}))^2 \right)
 \leq \widetilde \Upsilon$ for any $T \geq 1$. This, together with \eqref{equ:Upsilon}, 
 further  implies 
 \be \label{equ:theorem:convergence:b3}  
\sum\nolimits_{t = 0}^T  (\err_1(\x^t))^2 + (\err_2(\x^t))^2  \leq  \Upsilon.
\ee
Moreover, since $\Theta < + \infty$, we know that $\Upsilon$ is bounded. Hence, we have $\err_1(\x^t) \to 0$ and $\err_2(\x^t) \to 0$. 
Suppose $\err_1(\x^t) \leq\epsilon_1$ and $\err_2(\x^t)\leq \epsilon_2$ are  fulfilled for $t = \overline T$ but not fulfilled for all $t < \overline T$. Then there hold that $\err_1(\x^{\overline T}) \leq \epsilon_1$, $\err_2(\x^{\overline T}) \leq \epsilon_2$, and $(\err_1(\x^t))^2 + (\err_2(\x^t))^2 > \min\{\epsilon_1^2, \epsilon_2^2\}$  for all $t < \overline T$. Setting $T$ in \eqref{equ:theorem:convergence:b3}  as $\overline T$ yields $\overline T \min\{\epsilon_1^2, \epsilon_2^2\} \leq  \Upsilon$, which 
completes the proof. 
\end{proof}

To end this section, we give two remarks on \iRGD. 

First, we can also consider \iRGD~with another inexactness condition:
 \be \label{equ:iBB:general} 
\|\grad q(U^t) - \xi^t \|_{\Fsf} \leq \theta \|\xi^t\|_{\Fsf}\quad \mbox{with}\quad   \theta \in (0, 1),
 \ee
which can be seen as an extension of the inexact condition in the Euclidean space considered in \cite{carter1991global,hu2007inexact}. Using a similar proof therein, we can prove 
$\|\grad q(U^t)\|_{\Fsf} \to 0$
if  $\tau_t \equiv \tau \in (0, 2(1 - \theta)/L)$.  However, the inexact condition in \eqref{equ:iBB:general} is generally not easy to verify since  $\grad q(U^t)$ is not available and computing it needs to solve the corresponding subproblem exactly. 
 In sharp contrast,   our inexactness condition in \cref{equ:inexact:grad:cond} is easy to implement. With $U^{t+1}$ and $(\alpha^{t}, \beta^{t})$ in hand, we can use various  methods, such as gradient descent, block coordinate descent (also known as Sinkhorn algorithm for this particular problem), Newton method, to find $\x^{t+1}$ satisfying \eqref{equ:inexact:grad:cond}. 
 
 Second,  the sufficient descent property \eqref{equ:ls:new:general} will hold if the partial function $\nabla_U \Lcal(\alpha, \beta, U)$ has some Lipschitz property (as shown in \cref{equ:lips}). In \cref{section:iRBBSs}, we will present a practical \iRBBS, for which \cref{assump:sufficient:decrease} holds with $\delta_2 = 1/2$. In this case,  the requirement on the inexactness parameter $\theta_t$ is very weak, i.e.,  it only needs to be nonnegative.  This provides us much freedom to choose $\theta_t$. 
 Moreover, we also point out that if we use the gradient descent 
 or Newton-type method instead of the Sinkhorn iteration to compute $(\alpha^t, \beta^t)$, \cref{assump:sufficient:decrease}  holds with $\delta_2 = 0$. 
In addition, very recently,   Berahas et al. \cite{berahas2021global} proposed a generic line search procedure based on the variant of the Euclidean version of \eqref{equ:iBB:general}, however, the line search condition therein involves the error between the exact function value and the approximate function value which is generally hard to evaluate. In contrast, verifying the sufficient decrease condition \eqref{equ:ls:new:general} does not need to estimate this error.  Moreover, \cite{berahas2021global}  did not discuss how to choose the stepsize adaptively as we do in \cref{section:iRBBSs}. 

\section{A practical \iRBBS~for solving  the subproblem \eqref{prob:sub:RALMexp:0:1}} \label{section:iRBBSs}
In this section, we first give a practical version of \iRGD~\eqref{equ:RBB:inexact}, named as inexact Riemannian Barzilai-Borwein method with Sinkhorn iteration (\iRBBS),  to solve the subproblem \eqref{prob:sub:RALMexp:0:1} \rev{or \eqref{prob:sub:RALMexp}} in  \cref{subsection:iRBBSs}.  Then, we verify that \cref{assump:sufficient:decrease} holds with $\rho \in [0, \eta/2)$ and  $\delta_2 = \eta/2 - \rho>0$  in \cref{subsection:iRBBSs:verification}. The convergence and complexity results of \iRBBS~are discussed in \cref{subsection:iRBBSs:convergence}.
\subsection{\iRBBS} \label{subsection:iRBBSs}
To make the framework of \iRGD~\eqref{equ:iRGD} a practical algorithm, the first main ingredient is how to compute $\alpha^{t+1}$ and  $\beta^{t+1}$ such that \eqref{equ:inexact:grad:cond} holds. Thanks to the famous Sinkhorn iteration \cite{cuturi2013sinkhorn}, we can do this efficiently.  Given $U^{t+1} \in \Gcal$ and $\alpha^{(0)} = \alpha^{t}$, $\beta^{(0)} = \beta^{t}$, 
the Sinkhorn  iteration for $\ell = 0, 1, \ldots $ is given as 
\begin{subequations}\label{equ:SK}
\begin{align}
\alpha^{(\ell+1)} ={}& \alpha^{(\ell)} - \eta \log r + \eta \log(\zeta^{(\ell)} \1), \label{equ:SK:a} \\[2pt]
\beta^{(\ell+1)} ={}& \beta^{(\ell)} - \eta \log c + \eta \log ( (\zeta^{(\ell+\frac12)})^{\Tsf} \1),\label{equ:SK:b} 
\end{align}
\end{subequations}
where 
$\zeta^{(\ell)} = \zeta(\alpha^{(\ell)}, \beta^{(\ell)}, U^{t+1})$ and $\zeta^{(\ell+\frac12)} = \zeta(\alpha^{(\ell+1)}, \beta^{(\ell)}, U^{t+1}).$  \rev{Here, we call \eqref{equ:zeta}  that $[\zeta(\alpha, \beta, U)]_{ij} = \kappa_{ij}\exp(-(\alpha_i + \beta_j + \langle \rev{M_{ij}}, UU^{\Tsf}\rangle)/\eta)$.}
Denote  $u^{(\ell)} = \exp(-\alpha^{(\ell)}/\eta)$ and  $v^{(\ell)} = \exp(-\beta^{(\ell)}/\eta)$. 
An equivalent formulation of \eqref{equ:SK} is given as 
\be\label{equ:SK:2}
u^{(\ell+1)} = r/(A v^{(\ell)}),  \quad v^{(\ell+1)} = c/(A^{\Tsf} u^{(\ell+1)}), 
\ee
where $A \in \Rbb^{n \times n}$ with $A_{ij} = \kappa_{ij} \exp(-\langle \rev{M_{ij}}, UU^{\Tsf}\rangle/\eta)$. Here, for vectors $a, b \in \Rbb^n$, the vector $a/b \in \Rbb^n$ is defined as $(a/b)_i = a_i/b_i$.  

Note that  $\alpha^{(\ell+1)}$ and $\beta^{(\ell+1)}$ in \eqref{equ:SK} satisfy 
\be\label{equ:sk:a:2} 
\alpha^{(\ell+1)} \in \argmin_{\alpha \in \Rbb^n}\ \Lcal(\alpha, \beta^{(\ell)}, U^{t+1})\quad \mbox{and}\quad  \beta^{(\ell+1)} \in \argmin_{\beta \in \Rbb^n}\ \Lcal(\alpha^{(\ell+1)}, \beta, U^{t+1}).
\ee
By  \cite[Remark 3.1]{huang2021riemannian}, we have  
\be \label{equ:sk:prop1}
\|\zeta^{(\ell+\frac12)}\|_1 = \|\zeta^{(\ell+1)}\|_1 = 1, \quad \forall~\ell \geq 0
\ee
and hence 
\be \label{equ:pi:zeta} 
\pi^{(\ell+\frac12)}= \zeta^{(\ell+\frac12)}/\|\zeta^{(\ell+\frac12)}\|_1 = \zeta^{(\ell+\frac12)}~\,\, \mbox{and}~\,\, \pi^{(\ell+1)} = \zeta^{(\ell+1)}/\|\zeta^{(\ell+1)}\|_1 = \zeta^{(\ell+1)}.
\ee
From the optimality condition of $\beta^{(\ell+1)}$, we have 
 $\big(\pi^{(\ell+1)}\big)^\Tsf \1 - c = 0$.
Therefore, to make condition  \eqref{equ:inexact:grad:cond} hold,  we stop the Sinkhorn  iteration once 
\be \label{equ:theorem:iBB:convergence:01}
\|\pi^{(\ell+1)}\1 - r\|_1 \leq  \theta_{t+1},
\ee
and  then set $\alpha^{t+1} = \alpha^{(\ell+1)}$, $\beta^{t+1} = \beta^{(\ell+1)}$, and $\pi^{t+1} = \pi^{(\ell+1)}$.

  Next, we choose the stepsize $\tau_t$ in \eqref{equ:RBB:inexact}. As shown later in \cref{subsection:iRBBSs:verification},  \cref{assump:sufficient:decrease} holds with  $\rho \in [0, \eta/2)$,  $\delta_2 = \eta/2 - \rho$, and $\underline \tau \in (0, 2(1-\delta)/L)$.  It means that we can choose $\tau_t \in (\underline \tau, 2(1-\delta)/L)$. However, this always gives small stepsizes, making the whole algorithm perform slowly.   Here, we would like to adaptively choose the stepsize $\tau_t$ satisfying \eqref{equ:ls:new:general} by 
 adopting  the simple backtracking line search technique 
starting from an initial guess of the stepsize $\tau_t^{(0)}$. 
Owing to the excellent performance of the  BB method  in Riemannian optimization \cite{wen2013feasible,jiang2015framework,gao2018new,iannazzo2018riemannian},   we choose the initial guess $\tau_t^{(0)}$  for $t \geq 1$ as a new Riemannian BB stepsize with safeguards: 
 \be \label{equ:tau:ABB:safeguards}
 \tau^{(0)}_t = \min\{\max\{\tau_t^{\mathrm{BB}},\tau_{\min}\}, \tau_{\max}\},
 \ee
 where $\tau_{\max} > \tau_{\min} > 0$ are preselected stepsize safeguards and $\tau_1^{\mathrm{BB}} = \tau_1^{\mathrm{BB2}}$ and 
 \[
 \tau_t^{\mathrm{BB}} = 
 \begin{cases} \min\{\tau_{t-1}^{\mathrm{BB2}}, \tau_t^{\mathrm{BB2}}, \max\{\tau_t^{\mathrm{new}}, 0\}\}, & 
  \mbox{if}\  \tau_t^{\mathrm{BB2}}/\tau_t^{\mathrm{BB1}} < \psi_t,\\
  \tau_t^{\mathrm{BB1}}, &  \mbox{otherwise}, 
  \end{cases}
  \quad \forall~t \geq 2.
  \] 
 Here, $\tau_t^{\mathrm{BB1}} = \|U^t - U^{t-1}\|_{\Fsf}^2/|\iprods{U^t - U^{t-1}}{\xi^t - \xi^{t-1}}|$, 
 $\tau_t^{\mathrm{BB2}} = |\iprods{U^t - U^{t-1}}{\xi^t - \xi^{t-1}}|/\|\xi^t - \xi^{t-1}\|_{\Fsf}^2$, and $\tau_t^{\mathrm{new}}$ is defined following \cite[Eq.\,(2.15)]{huang2021equipping}. In our numerical tests,  we set the initial $\psi_t$  to be 0.05 and   update $\psi_{t+1} = \psi_t/1.02$ if  $\tau_t^{\mathrm{BB2}}/\tau_t^{\mathrm{BB1}} < \psi_t$  and update $\psi_{t+1} = 1.02 \psi_t$
 otherwise. 
 Note that our proposed  Riemannian BB stepsize is a variant of the novel BB stepsize presented in \cite{huang2021equipping}.

   The remaining thing is to identify the choice of $\omega_t$ in \eqref{equ:ls:new:general}.  Setting the initial reference function value $E_0^r = E_{\rho}(\x^{0})$, we follow the Zhang-Hager's approach \cite{zhang2004nonmonotone} to update  $E_{t+1}^r = (\gamma Q_t E^r_t +   E_{\rho}(\x^{t+1}))/Q_{t+1}$ and $Q_{t+1} = \gamma Q_t + 1$ with a constant  $\gamma \in [0,1)$ and $Q_0 = 1$.  Given $\rho \in \left[0,\eta/2\right)$, by choosing $\omega_t = E_t^r - E_{\rho}(\x^t)$ in \eqref{equ:ls:new:general}, we derive our nonmonotone line search condition   as 
 \be\label{equ:iRBBSs:NML:2}
 E_{\rho}(\x^{t+1})    \leq  E_t^r - \delta_1 \tau_t \|\xi^t\|^2_{\Fsf} - \left(\eta/2 - \rho\right)(\err_2(\x^{t+1}))^2.
  \ee
  Note that \rev{by \eqref{equ:Lcal:opt} and using the argument in} \cite[Theorem 5]{sachs2011nonmonotone},  we know that such $\omega_t$ satisfies \rev{$\sum_{t = 0}^{+\infty}\omega_t \leq \frac{\gamma}{1 - \gamma} ( E_{\rho}(\x^{0}) +\|C\|_{\infty} + \eta \|\!\log \kappa\|_{\infty}) < + \infty$.}

 We are ready to summarize the complete \iRBBS~algorithm  in \cref{alg:iRBBSs}.
\begin{algorithm}[!htbp]
\caption{A practical \iRBBS~for solving \eqref{prob:sub:RALMexp}.}\label{alg:iRBBSs}
   Choose $\tau_{\max} > \tau_{\min} > 0$, $\tau_0 > 0$, $\epsilon_1, \epsilon_2\geq 0$, $\sigma, \delta_1 \in (0,1)$, $\rho\in [0,\eta/2)$, $\delta_2 = \eta/2 - \rho$, and $\gamma \in [0,1)$. 
   Choose $(\alpha^{-1}, \beta^{-1}, U^0) \in \Mcal$. Set  $\alpha^{(0)} = \alpha^{-1}, \beta^{(0)}=\beta^{-1}$ and perform  the Sinkhorn iteration \eqref{equ:SK}  or \eqref{equ:SK:2} at $U^0$ until   
\eqref{equ:theorem:iBB:convergence:01} holds for some $\ell$.
Set  $\alpha^0 = \alpha^{(\ell+1)}, \beta^0 = \beta^{(\ell+1)}$ and $\pi^{0} = \pi^{(\ell+1)}$.

\For{$t = 0,1,\ldots$}{
Compute $\xi^t: = \grad_U \Lcal(\x^t)$. 

\lIf{$\|\xi^t\|_{\Fsf} \leq \epsilon_1$ and $\|\pi^t \1 - r\|_1 \leq \epsilon_2$}{
break}
Set $\tau^{(0)}_t$ according to \eqref{equ:tau:ABB:safeguards} if $t \geq 1$ and $\tau^{(0)} = \tau_0$ if $t = 0$.\\
\For{$s =0,1,\ldots$}{
Set $\tau_t = \tau_t^{(0)} \sigma^s$ and update $U^{t+1}  = \Retr_{U^t} \left( -\tau_t \xi^t \right)$.

Set  $\alpha^{(0)} = \alpha^{t}$ and $\beta^{(0)}=\beta^{t}$
and  perform the Sinkhorn iteration \eqref{equ:SK} or \eqref{equ:SK:2} at $U^{t+1}$ until   
\eqref{equ:theorem:iBB:convergence:01} holds for some $\ell$.

Set  $\alpha^{t+1} = \alpha^{(\ell+1)}, \beta^{t+1} = \beta^{(\ell+1)}$  and $\pi^{t+1} = \pi^{(\ell+1)}$.

\lIf{\eqref{equ:iRBBSs:NML:2} holds}{
 break}
 }
}
\end{algorithm}
Some remarks on \cref{alg:iRBBSs} are listed in order.  

First, the basic idea of proposing \cref{alg:iRBBSs} is sharply different from that of   \RABCDv~developed in \cite{huang2021riemannian}.  Ours is based on the \iRGD~framework while the latter is based on the BCD approach. 
Moreover, the overall complexity of finding an $(\epsilon_1, \epsilon_2)$-stationary point of our \cref{alg:iRBBSs} (equipped with a rounding procedure given in \cite[Algorithm 2]{altschuler2017near}) is in the same order as that of \RABCDv; see \cref{prop:complexity:prw}. 

Second, one may also understand our method as an inexact version of BCD if treating $(\alpha,\beta)$ as one block \cite{bonettini2011inexact,gur2022convergent}.  However, the problem setting or inexactness conditions therein are quite different from ours. It should also be emphasized that 
it is the inexact Riemannian BB viewpoint other than the inexact BCD  viewpoint that enables us to choose the stepsize adaptively via leveraging the efficient BB  stepsize. 
 Actually, tuning the best stepsize for the $U$-update in \RABCDv~is nontrivial. It is remarked in \cite[Remark 6.1]{huang2021riemannian} that {\it ``the adaptive algorithms \RABCD~and \RAGAS~are also sensitive to the step size, though they are usually faster than their non-adaptive versions \RBCD~and \RGAS.''} and {\it ``How to tune these parameters more systematically is left as a future
work.''}  Our numerical results in \cref{subsec:num:subprob} 
show the efficiency of our \iRBBS~over \RBCD~and its variant \RABCD.  

Third,  it might be better to use possible multiple Sinkhorn iterations rather than only one iteration as done in \RABCDv~in updating $\alpha$ and $\beta$ from the computational point of view.   The computational cost of updating $\alpha^{(l +1)}$ and $\beta^{(\ell+1)}$ via one Sinkhorn iteration   \eqref{equ:SK} or \eqref{equ:SK:2}   
is $\Ocal(n^2)$. In contrast,  the cost of updating $U^{t+1}$ via performing a Riemannian gradient descent step \eqref{equ:RBB:inexact} is $\Ocal(ndk + n^2k + dk^2)$, wherein the main cost is to compute $V_{\pi^{t}} U^t$, which can be done by observing
$ V_{\pi} U 
 =  X \Diag(\pi\1)X^{\Tsf} U + Y \Diag(\pi^{\Tsf}\1) Y^{\Tsf} U   - X\pi Y^{\Tsf} U - Y\pi^{\Tsf} X^{\Tsf} U. 
$
Considering that the cost of updating $\alpha$ and $\beta$ is much less than that of updating  $U$, it is reasonable to update $\alpha$ and  $\beta$ multiple times and update $U$ only once.

\subsection{Verification of \cref{assump:sufficient:decrease} for \iRBBS} \label{subsection:iRBBSs:verification}
We next show that \cref{assump:sufficient:decrease} holds with $\rho \in [0, \eta/2)$, $\delta_2 = \eta/2 - \rho$, and $\underline \tau \in (0, 2(1 - \delta_1)/L)$. 
We first explore the sufficient descent property of the Sinkhorn iteration \eqref{equ:SK} or \eqref{equ:SK:2}.

\begin{lemma}\label{lemma:sufficient:feasi:sk}
Let $\{(\alpha^{\ell}, \beta^\ell)\}$ be the sequence generated by \eqref{equ:SK} and consider   $\{\pi^{(\ell)}\}$ and  $\{\pi^{(\ell +\frac12)}\}$ in \eqref{equ:pi:zeta}.  Then 
 we have 
$\|\pi^{(0)}\1 - r\|_1 +  \|(\pi^{(0)})^\Tsf\1 - c\|_1  \geq \|(\pi^{(\frac12)})^\Tsf \1 - c\|_1  \geq   \|\pi^{(1)}\1 - r\|_1$ and 
\be 
 \|\pi^{(\ell)}\1 - r\|_1 \geq  \|(\pi^{(\ell+\frac12)})^\Tsf \1 - c\|_1 \geq  \|\pi^{(\ell+1)}\1 - r\|_1, \quad \forall~\ell \geq 1. \label{equ:pi:decrease:1} 
\ee
\end{lemma}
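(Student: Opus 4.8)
The plan is to analyze one half-step of the Sinkhorn iteration at a time, using the fact that each update exactly matches one of the two marginals. Recall from \eqref{equ:sk:prop1} and \eqref{equ:pi:zeta} that $\pi^{(\ell+\frac12)}$ and $\pi^{(\ell+1)}$ are already normalized to have unit $\ell_1$-norm, and that the update \eqref{equ:SK:a} is chosen so that $\pi^{(\ell+\frac12)}\1 = r$ (scaling rows to match $r$), while \eqref{equ:SK:b} gives $(\pi^{(\ell+1)})^\Tsf\1 = c$ (scaling columns to match $c$). So for $\ell \geq 1$ the left marginal of $\pi^{(\ell+\frac12)}$ is already correct, and I want to bound its column-marginal error $\|(\pi^{(\ell+\frac12)})^\Tsf\1 - c\|_1$ by the row-marginal error $\|\pi^{(\ell)}\1 - r\|_1$ of the previous iterate, and similarly bound the next row-marginal error by this column-marginal error.

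First I would establish the elementary fact that rescaling the rows of a nonnegative matrix to fix the row marginals can only decrease (in $\ell_1$) the deviation of the column marginals from their target, provided the total masses agree. Concretely, if $\pi \geq 0$ has $\|\pi\|_1 = 1$ and $\pi'$ is obtained by multiplying row $i$ by $r_i / (\pi\1)_i$, then $\pi'\1 = r$ and one computes $\|(\pi')^\Tsf\1 - c\|_1 = \sum_j |\sum_i \pi'_{ij} - c_j|$. The key inequality is that $\sum_j |\sum_i (r_i/(\pi\1)_i - 1)\pi_{ij}| \le \sum_i |r_i/(\pi\1)_i - 1|(\pi\1)_i = \sum_i |r_i - (\pi\1)_i| = \|\pi\1 - r\|_1$, using the triangle inequality and $\sum_j \pi_{ij} = (\pi\1)_i$; combined with $(\pi^\Tsf\1 - c)$ this yields $\|(\pi')^\Tsf\1 - c\|_1 \leq \|\pi^\Tsf\1 - c\|_1 + \|\pi\1 - r\|_1$ in general, but when the column marginal was already exact (the $c$-error was zero, which holds for $\pi^{(\ell)}$ with $\ell\ge1$ since it came from a column-scaling step), the bound collapses to $\|(\pi')^\Tsf\1-c\|_1 \le \|\pi\1 - r\|_1$. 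Applying this with $\pi = \pi^{(\ell)}$, $\pi' = \pi^{(\ell+\frac12)}$ gives the first inequality in \eqref{equ:pi:decrease:1}; applying the symmetric statement (columns in place of rows) with $\pi = \pi^{(\ell+\frac12)}$, $\pi' = \pi^{(\ell+1)}$ — whose row marginal is exact — gives the second. For the base case $\ell = 0$, the iterate $\pi^{(0)}$ need not have either marginal correct, so I would use the general two-term bound: $\|(\pi^{(\frac12)})^\Tsf\1 - c\|_1 \le \|(\pi^{(0)})^\Tsf\1 - c\|_1 + \|\pi^{(0)}\1 - r\|_1$, which is exactly the claimed first chain of inequalities, and then the second step $\|\pi^{(1)}\1 - r\|_1 \le \|(\pi^{(\frac12)})^\Tsf\1 - c\|_1$ follows as in the generic case since $\pi^{(\frac12)}$ has exact row marginal $r$.

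The main obstacle, such as it is, is bookkeeping the normalization: one must make sure that after each Sinkhorn half-step the matrix still has unit total mass so that the ``masses agree'' hypothesis of the elementary rescaling lemma holds, and that $\pi^{(\ell)}$ (not $\zeta^{(\ell)}$) is the object whose marginals we track — here \eqref{equ:sk:prop1}–\eqref{equ:pi:zeta} do the work, so the argument is genuinely short. I would therefore structure the proof as: (i) state and prove the one-line rescaling inequality in $\ell_1$; (ii) invoke \eqref{equ:sk:prop1}–\eqref{equ:pi:zeta} to identify $\pi^{(\ell+\frac12)}, \pi^{(\ell+1)}$ as row- resp.\ column-balanced rescalings of the previous iterate; (iii) read off \eqref{equ:pi:decrease:1} for $\ell \ge 1$ and the $\ell = 0$ chain by the same computation with one extra error term retained.
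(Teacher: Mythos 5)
Your proposal is correct and follows essentially the same route as the paper's proof: both arguments rest on the observation that each Sinkhorn half-step makes one marginal exact, bound the new marginal error by the entrywise $\ell_1$ distance between consecutive (half-)iterates via $\|A\1\|_1\le\|A\|_1$, and then compute that this entrywise distance equals exactly the pre-rescaling error in the other marginal, with the extra additive term retained only for the unbalanced base case $\ell=0$. The only difference is organizational: you package the computation as a standalone rescaling inequality and specialize it, whereas the paper carries it out inline for the column-rescaling half-step and invokes symmetry for the other.
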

\begin{proof}
By the optimality of $\alpha^{(\ell+1)}$ in \eqref{equ:sk:a:2} \rev{and \eqref{equ:pi:zeta}}, we have $\pi^{(\ell+\frac12)}\1 - r = 0$ for $\ell \geq 0$. 
Therefore, we  have 
\[
\begin{aligned}
 &\|\pi^{(\ell+1)}\1 - r\|_1 = \|\pi^{(\ell+\frac12)}\1  - \pi^{(\ell+1)}\1 \|_1 \leq  \|\pi^{(\ell+\frac12)}   - \pi^{(\ell+1)} \|_1 \nn 
 \end{aligned}
 \]
 \[
 \begin{aligned}
={}& \sum_{ij}  \pi^{(\ell+\frac12)}_{ij} \big|1 - \exp(- (\beta^{(\ell+1)}_j - \beta^{(\ell)}_j)/\eta)\big| = \sum_{ij}  \frac{\pi^{(\ell+\frac12)}_{ij}}{[(\pi^{(\ell+\frac12)})^{\Tsf} \1]_j}  \left|[(\pi^{(\ell+\frac12)})^{\Tsf} \1]_j  - c_j\right| \nn \\
={}& \|(\pi^{(\ell+\frac12)})^{\Tsf} \1 - c\|_1,    \nn  
\end{aligned}
\]
where the first inequality uses the Cauchy-Schwarz inequality, the second equality comes from   \eqref{equ:pi:zeta} \rev{and the definitions of $\zeta^{(\ell)}$ and $\zeta^{(\ell+\frac12)}$ after \eqref{equ:SK}}, and the third equality is due to \eqref{equ:SK:b} \rev{and \eqref{equ:pi:zeta}}.  This proves $\|(\pi^{(\ell+\frac12)})^\Tsf \1 - c\|_1 \geq  \|\pi^{(\ell+1)}\1 - r\|_1$ for  $\ell \geq 0$.  On the other hand, by the optimality of $\beta^{(\ell+1)}$ in \eqref{equ:sk:a:2} \rev{and \eqref{equ:pi:zeta}}, we have $(\pi^{(\ell)})^\Tsf\1 - c = 0$ for $\ell\geq 1$.  Using a similar argument, we can prove $\|\pi^{(\ell)}\1 - r\|_1 \geq  \|(\pi^{(\ell+\frac12)})^\Tsf \1 - c\|_1$ for $\ell \geq 1$ and $\|\pi^{(0)}\1 - r\|_1 +  \|(\pi^{(0)})^\Tsf\1 - c\|_1  \geq \|(\pi^{(\frac12)})^\Tsf \1 - c\|_1$.  The proof is completed. 
\end{proof}
\begin{lemma}[Sufficient decrease of $\Lcal$ in $(\alpha, \beta)$]\label{lemma:sufficien:decrease} Let $\{(\alpha^{\ell}, \beta^\ell)\}$ be the sequence generated by \eqref{equ:SK} and consider  $\{\pi^{(\ell)}\}$ in \eqref{equ:pi:zeta}. Then, for each $\ell \geq 0$,   we have
\be \label{lemma:sufficient:decrease:11}  
\Lcal(\alpha^{(\ell+1)},\beta^{(\ell+1)},U^{t+1}) \leq\Lcal(\alpha^{(\ell)},\beta^{(\ell)},U^{t+1})  -  (\eta/2) \big(\|\pi^{(\ell)}\1 - r\|_1^2 + \|\pi^{(\ell+1)} \1 - r\|_1^2 \big).
\ee
\end{lemma}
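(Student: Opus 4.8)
The plan is to decompose one Sinkhorn sweep \eqref{equ:SK} into its two half-steps and to evaluate the exact decrease of $\Lcal$ contributed by each. Write the total change as
\[
\Lcal(\alpha^{(\ell+1)},\beta^{(\ell+1)},U^{t+1}) - \Lcal(\alpha^{(\ell)},\beta^{(\ell)},U^{t+1}) = D_\alpha + D_\beta,
\]
where $D_\alpha$ is the change produced by the $\alpha$-update \eqref{equ:SK:a} (with $\beta^{(\ell)}$ fixed) and $D_\beta$ the change produced by the $\beta$-update \eqref{equ:SK:b} (with $\alpha^{(\ell+1)}$ fixed). Both are nonpositive by \eqref{equ:sk:a:2}, but I want exact expressions for them.

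First I would evaluate $D_\alpha$. Substituting the explicit formula $\alpha^{(\ell+1)}-\alpha^{(\ell)} = -\eta\log r + \eta\log(\zeta^{(\ell)}\1)$ into the definition of $\Lcal$ in \eqref{prob:sub:RALMexp}, and using $\zeta^{(\ell+\frac12)}_{ij} = \zeta^{(\ell)}_{ij}\, r_i/[\zeta^{(\ell)}\1]_i$, so that $\zeta^{(\ell+\frac12)}\1 = r$ and $\|\zeta^{(\ell+\frac12)}\|_1 = 1$ (consistent with \eqref{equ:sk:prop1}), the $c^\Tsf\beta$ term cancels; after rewriting the logarithms in terms of the normalized matrix $\pi^{(\ell)} = \zeta^{(\ell)}/\|\zeta^{(\ell)}\|_1$, one obtains
\[
D_\alpha = -\eta\sum\nolimits_i r_i\log\frac{r_i}{[\pi^{(\ell)}\1]_i} = -\eta\,\KL\!\big(r\,\big\|\,\pi^{(\ell)}\1\big).
\]
An identical computation, with rows replaced by columns and $r$ by $c$, and using $\|\zeta^{(\ell+1)}\|_1 = 1$, gives $D_\beta = -\eta\,\KL\big(c\,\big\|\,(\pi^{(\ell+\frac12)})^\Tsf\1\big)$.

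Next I would apply Pinsker's inequality $\KL(p\|q)\ge\tfrac12\|p-q\|_1^2$, valid since $r$, $c$, $\pi^{(\ell)}\1$ and $(\pi^{(\ell+\frac12)})^\Tsf\1$ are all probability vectors, to get
\[
\Lcal(\alpha^{(\ell)},\beta^{(\ell)},U^{t+1}) - \Lcal(\alpha^{(\ell+1)},\beta^{(\ell+1)},U^{t+1}) \ \ge\ \frac\eta2\Big(\|\pi^{(\ell)}\1 - r\|_1^2 + \|(\pi^{(\ell+\frac12)})^\Tsf\1 - c\|_1^2\Big).
\]
Finally, I would invoke \cref{lemma:sufficient:feasi:sk}, which yields $\|(\pi^{(\ell+\frac12)})^\Tsf\1 - c\|_1 \ge \|\pi^{(\ell+1)}\1 - r\|_1$, to replace the column-marginal error by the row-marginal error; this gives exactly \eqref{lemma:sufficient:decrease:11}.

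The only slightly delicate point is the bookkeeping in the identity for $D_\alpha$: since $\|\zeta^{(\ell)}\|_1$ need not equal $1$ when $\ell=0$, the normalization constant has to be carried through the log-sum-exp term until everything is expressed via $\pi^{(\ell)}$, after which the unit-norm subtlety disappears and the KL identity is clean. Everything else is a routine substitution.
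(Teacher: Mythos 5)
Your proof is correct and follows essentially the same route as the paper's: an exact KL-divergence identity for each Sinkhorn half-step, Pinsker's inequality, and then \cref{lemma:sufficient:feasi:sk} to convert the column-marginal error $\|(\pi^{(\ell+\frac12)})^\Tsf\1-c\|_1$ into the row-marginal error $\|\pi^{(\ell+1)}\1-r\|_1$. The only cosmetic difference is that the paper imports the $\beta$-half-step decrease from \cite[Lemma 4.1]{huang2021riemannian}, whereas you derive it explicitly by the same computation used for the $\alpha$-step; your handling of the $\ell=0$ normalization (the $\eta\log\|\zeta^{(0)}\|_1$ terms cancelling) is also exactly the bookkeeping the paper relies on.
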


\begin{proof}
By  \cite[Lemma 4.1]{huang2021riemannian}, we have $\Lcal(\alpha^{(\ell+1)},\beta^{(\ell+1)},U^{t+1}) \leq\Lcal(\alpha^{(\ell+1)},\beta^{(\ell)},U^{t+1})$\!\!\\ $-  (\eta/2) \|(\pi^{(\ell+\frac12)}) ^\Tsf\1 - c\|_1^2$. By \cref{lemma:sufficient:feasi:sk}, for $\ell \geq 0$, we further have
\be\label{lemma:sufficien:decrease:c00}
\Lcal(\alpha^{(\ell+1)},\beta^{(\ell+1)},U^{t+1}) \leq\Lcal(\alpha^{(\ell+1)},\beta^{(\ell)},U^{t+1}) -  (\eta/2) \|(\pi^{(\ell+1)}) \1 - r\|_1^2.
\ee
Moreover, with \eqref{equ:sk:prop1} and \eqref{equ:pi:zeta}, we  get
\be\label{lemma:sufficien:decrease:c01}
\begin{aligned}
&\Lcal(\alpha^{(\ell+1)},\beta^{(\ell)},U^{t+1}) -  \Lcal(\alpha^{(\ell)},\beta^{(\ell)},U^{t+1}) = \iprods{r}{\alpha^{(\ell+1)} - \alpha^{(\ell)}}  \\
 ={}& -\eta \iprods{r}{\log r - \log (\pi^{(\ell)} \1)} = - \eta \KL(r\| \pi^{(\ell)} \1) \leq - (\eta/2)\|\pi^{(\ell)}\1 - r\|_1^2,
\end{aligned}
\ee
where the first equality uses \eqref{equ:SK:a} and the last inequality is due to Pinsker's inequality \cite{kullback1967lower}. 
Combining \eqref{equ:pi:decrease:1}, \eqref{lemma:sufficien:decrease:c00}, and \eqref{lemma:sufficien:decrease:c01}, we obtain  \eqref{lemma:sufficient:decrease:11}. The proof is completed. 
\end{proof}

Let $(\alpha^{t+1\ast}, \beta^{t+1\ast}) \in \argmin_{\alpha\in\Rbb^n, \beta\in \Rbb^n} \Lcal(\alpha, \beta, U^{t+1})$. 
By refining the proof in \cite{dvurechensky2018computational} and \cite{lin2022efficiency}, we can prove 
\be
\max\{\|\alpha^{(\ell+1)}\|_{\var},  \|\alpha^{t+1\ast}\|_{\var},\|\beta^{(\ell+1)}\|_{\var}, \|\beta^{t+1\ast}\|_{\var}\} \leq R^{t+1},  \quad \rev{\forall~\ell \geq 0}, \label{equ:alpha:bound} 
\ee
where 
\be\label{equ:Rt1}
R^{t+1} = \|C(U^{t+1})\|_{\var} + \eta \Psi
\ee
with $\Psi =   \|\!\log \kappa \|_{\var} +  \max\{\|\!\log r\|_{\var},\|\!\log c\|_{\var}\}$. 
Since $\nabla_{\alpha} \Lcal(\alpha^{(\ell+1)}, \beta^{(\ell+1)}, U^{t+1}) =  r - \pi^{(\ell+1)}\1$ and  $\nabla_{\beta} \Lcal(\alpha^{(\ell+1)}, \beta^{(\ell+1)}, U^{t+1}) = c - (\pi^{(\ell+1)})^\Tsf\1  = 0$ and $\Lcal(\alpha, \beta, U^{t+1})$ is jointly convex with respect to $\alpha$ and $\beta$, we have 
\be \label{equ:L:sufficient:descent:00}
\Lcal(\alpha^{(\ell+1)}, \beta^{(\ell+1)},U^{t+1}) - \Lcal(\alpha^{t+1*}, \beta^{t+1*},U^{t+1}) \leq  \iprods{\pi^{(\ell+1)}\1 - r}{\alpha^{t+1*} - \alpha^{(\ell+1)}}. 
\ee
 Given $x\in \Rbb^n$, let $x_{\mathrm{m}} =\|x\|_{\var}/2 + x_{\min}$,  it holds that 
 $\|x -  x_{\mathrm{m}}  \1\|_{\infty}  = \|x\|_{\var}/2$. 
 For $y\in\Rbb^n$ with $\iprod{y}{\1} = 0$,  we further know that
$\iprod{y}{x} = \iprod{y}{x - x_{\mathrm{m}} \1}  
\leq \|y\|_{1} \|x - x_{\mathrm{m}} \1\|_{\infty} 
= ({\|x\|_{\var}}/{2})  \|y\|_{1}.$
Applying this assertion with $y = \pi^{(\ell+1)}\1 - r$, $x = \alpha^{t+1*}$  or $x = \alpha^{(\ell+1)}$,
we obtain from \eqref{equ:alpha:bound} and \eqref{equ:L:sufficient:descent:00} that
\[
\Lcal(\alpha^{(\ell+1)}, \beta^{(\ell+1)},U^{t+1}) - \Lcal(\alpha^{t+1*}, \beta^{t+1*},U^{t+1}) \leq R^{t+1}  \|\pi^{(\ell+1)}\1 - r\|_1 \quad \rev{\forall~\ell \geq 0}.
\]

We need to use the following elementary  results, whose proof is omitted due to the space reason.  
\begin{lemma} \label{lemma:sequence}
Let $\vartheta_1$ and  $\vartheta_2$ be two given positive constants.  Consider two  sequences $\{a_\ell\}, \{b_\ell\} \subseteq \Rbb_{+}$ with $\ell \geq 0$. If they obey: $a_\ell - a_{\ell+1} \geq  \vartheta_1 (b_\ell^2 + b_{\ell+1}^2)$ and  $a_\ell \leq \vartheta_2 b_\ell$ for all $\ell \geq 1$,  
then  we have $\min_{1 \leq i \leq \ell} b_i \leq \rev{\frac{\vartheta_2/\vartheta_1}{\ell + \sqrt{2}-1}}$ for all $\ell \geq 1$. 
\end{lemma}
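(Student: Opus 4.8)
The plan is to reduce the statement to two elementary estimates — a reciprocal telescoping bound on the $a_m$'s and a tail‑sum lower bound on $a_m$ in terms of $B:=\min_{1\le i\le\ell}b_i$ — and then combine them at a well‑chosen index. First I would dispose of the trivial cases: if $B=0$ the claim holds since its left‑hand side is $0$; and since $a_i-a_{i+1}\ge\vartheta_1(b_i^2+b_{i+1}^2)$ forces $\{a_i\}$ to be nonincreasing while $a_i\le\vartheta_2 b_i$, one sees that $a_i=0$ would force $b_i=0$, so in the remaining case $a_i>0$ for all $i\ge1$. From $a_i\le\vartheta_2 b_i$, i.e. $b_i\ge a_i/\vartheta_2$, I get
\[
a_i-a_{i+1}\ \ge\ \vartheta_1(b_i^2+b_{i+1}^2)\ \ge\ \frac{\vartheta_1}{\vartheta_2^2}\,(a_i^2+a_{i+1}^2)\ \ge\ \frac{2\vartheta_1}{\vartheta_2^2}\,a_i a_{i+1},
\]
and dividing by $a_i a_{i+1}$ gives $\tfrac1{a_{i+1}}-\tfrac1{a_i}\ge 2\vartheta_1/\vartheta_2^2$ for every $i\ge1$. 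Combined with $a_1\le\vartheta_2^2/\vartheta_1$ (which follows from $\vartheta_1 b_1^2\le a_1-a_2\le a_1\le\vartheta_2 b_1$), telescoping yields $a_m\le\vartheta_2^2/\big(\vartheta_1(2m-1)\big)$ for all $m\ge1$.

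For the lower estimate I would sum the decrease inequality over $i=m,\dots,\ell$, drop $a_{\ell+1}\ge0$, and use $b_i\ge B$ for $m\le i\le\ell$ — the interior indices each contribute $2b_i^2\ge2B^2$, and the two endpoints contribute $b_m^2\ge B^2$ and $b_{\ell+1}^2\ge0$ — to obtain $a_m\ge\vartheta_1\big(2(\ell-m)+1\big)B^2$ for every $m\in\{1,\dots,\ell\}$. If $j$ denotes an index attaining the minimum, I would also record the sharper consequence of $a_j\le\vartheta_2 b_j=\vartheta_2 B$, namely $B\le(\vartheta_2/\vartheta_1)/\big(2(\ell-j)+1\big)$.

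Combining the upper and lower estimates at a common index gives $B\le(\vartheta_2/\vartheta_1)\big[(2m-1)(2(\ell-m)+1)\big]^{-1/2}$ for every $m$, and choosing $m$ near $(\ell+1)/2$ makes the product of order $\ell^2$; together with the $O(\ell^{-1})$ bound coming from the direct estimate on the small‑$j$ side, this produces a bound of the form $(\vartheta_2/\vartheta_1)/\Theta(\ell)$, and a careful account of the integer rounding and of the endpoint term in the telescoped sum then sharpens the constant to $\ell+\sqrt2-1$. The main obstacle I expect is precisely this last quantitative bookkeeping: when the minimizer $j$ sits near $\ell$, the individual estimates degrade to $O(\ell^{-1/2})$, so one has to couple the reciprocal recursion, the tail sum and the inequality $a_j\le\vartheta_2 b_j$ simultaneously and optimize the auxiliary index to show that no position of the minimizer can beat $(\vartheta_2/\vartheta_1)/(\ell+\sqrt2-1)$ — the computation is elementary but delicate, which is presumably why it is relegated in the paper.
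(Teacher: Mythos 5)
Your skeleton---the reciprocal telescoping bound $a_m\le \vartheta_2^2/(\vartheta_1(2m-1))$, the tail-sum lower bound $a_m\ge \vartheta_1\bigl(2(\ell-m)+1\bigr)B^2$ with $B=\min_{1\le i\le \ell}b_i$, and their combination at an intermediate index---is the right way to obtain the $O(1/\ell)$ rate, and every individual estimate you write down is correct. The gap is the final step, and it is not a matter of bookkeeping: your two estimates give $B\le(\vartheta_2/\vartheta_1)\bigl[(2m-1)(2\ell-2m+1)\bigr]^{-1/2}$, and since the two factors sum to $2\ell$, their product never exceeds $\ell^2$ (attained only for odd $\ell$; for even $\ell$ the best integer $m$ gives $\ell^2-1$). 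So the best denominator your route can produce is $\ell$, which is strictly smaller than the claimed $\ell+\sqrt2-1$ for every $\ell$; no care with rounding or endpoint terms can recover the missing additive constant. The loss happens when you weaken $a_i^2+a_{i+1}^2$ to $2a_ia_{i+1}$. The missing idea is to keep the full quadratic inequality at least once: setting $t_i=\vartheta_1 a_i/\vartheta_2^2$, the hypothesis gives $t_{i+1}^2+t_{i+1}+t_i^2-t_i\le 0$, hence $t_2\le\tfrac12\bigl(-1+\sqrt{1+4t_1-4t_1^2}\bigr)\le\tfrac12(\sqrt2-1)$ for any admissible $t_1$, and therefore $1/t_m\ge 2(m+\sqrt2-1)$ for $m\ge2$. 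Feeding the sharpened bound $a_m\le \vartheta_2^2/\bigl(2\vartheta_1(m+\sqrt2-1)\bigr)$ into your tail sum and optimizing $m$ is exactly what produces the denominator $\ell+\sqrt2-1$ (with the cases $\ell\le 2$ still requiring separate inspection).

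You should also be aware that the statement is false at $\ell=1$ exactly as written, so your trivial-case reduction cannot save it there: take $\vartheta_1=\vartheta_2=1$, $a_1=b_1=1$, $a_i=b_i=0$ for $i\ge2$ (and, say, $a_0=10$, $b_0=0$); all hypotheses hold, yet $\min_{1\le i\le 1}b_i=1>1/\sqrt2$. Any complete argument must therefore either restrict to $\ell\ge2$ or adjust the constant for small $\ell$. This boundary defect does not affect how the lemma is used downstream (the conclusion is invoked for $b_{\ell+1}$ and only the $O(1/\ell)$ order enters the Sinkhorn complexity bound), but it does mean the precise inequality you are asked to prove cannot be established in full generality.
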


 Applying \cref{lemma:sequence}  with $a_\ell = \Lcal(\alpha^{(\ell)},\beta^{(\ell)},U^{t+1}) - \Lcal(\alpha^{t+1*}, \beta^{t+1*},U^{t+1})$ and $b_\ell = \|\pi^{(\ell)}\1 - r\|_1$ and using \eqref{equ:pi:decrease:1} and \eqref{lemma:sufficient:decrease:11}, we have 
 $\|\pi^{(\ell+1)}\1 - r\|_1 \leq \rev{\frac{2R^{t+1}}{\eta(\ell+\sqrt{2}-1)}}$ for all $\ell \geq 0.$
 We thus have the following results. 

\begin{lemma}\label{lemma:sk:finite}
The total number of Sinkhorn iterations to find a point $\pi^{(\ell+1)}$ satisfying \eqref{equ:theorem:iBB:convergence:01} is at most \rev{$\lceil \frac{2R^{t+1}}{\eta \theta_{t+1}} + 2 - \sqrt{2}\rceil$}, where $R^{t+1}$ is defined in \eqref{equ:Rt1}.
\end{lemma}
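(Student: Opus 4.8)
The plan is to chain together the convexity-based descent estimate established just above the lemma with the monotonicity of the feasibility residuals from \cref{lemma:sufficient:feasi:sk} and the sufficient-decrease inequality from \cref{lemma:sufficien:decrease}, and then invoke the elementary sequence lemma \cref{lemma:sequence}. Concretely, fix the outer index $t$ and set $a_\ell = \Lcal(\alpha^{(\ell)},\beta^{(\ell)},U^{t+1}) - \Lcal(\alpha^{t+1*}, \beta^{t+1*},U^{t+1})$ and $b_\ell = \|\pi^{(\ell)}\1 - r\|_1$. Both are nonnegative: $a_\ell \geq 0$ since $(\alpha^{t+1*},\beta^{t+1*})$ is a minimizer, and $b_\ell \geq 0$ trivially. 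From \eqref{lemma:sufficient:decrease:11} together with \eqref{equ:pi:decrease:1} we get $a_\ell - a_{\ell+1} \geq (\eta/2)(b_\ell^2 + b_{\ell+1}^2)$ for all $\ell \geq 0$ (so in particular for $\ell \geq 1$), giving $\vartheta_1 = \eta/2$. From the bound displayed right before \cref{lemma:sequence}, namely $\Lcal(\alpha^{(\ell+1)},\beta^{(\ell+1)},U^{t+1}) - \Lcal(\alpha^{t+1*},\beta^{t+1*},U^{t+1}) \leq R^{t+1}\|\pi^{(\ell+1)}\1 - r\|_1$, we read off $a_\ell \leq R^{t+1} b_\ell$ for all $\ell \geq 1$, giving $\vartheta_2 = R^{t+1}$.

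Applying \cref{lemma:sequence} then yields $\min_{1\leq i\leq \ell} b_i \leq \frac{2R^{t+1}/\eta}{\ell + \sqrt{2} - 1}$ for all $\ell \geq 1$; combined with the monotone-decrease of $b_\ell$ for $\ell \geq 1$ guaranteed by \eqref{equ:pi:decrease:1}, this in fact gives $b_{\ell+1} = \|\pi^{(\ell+1)}\1 - r\|_1 \leq \frac{2R^{t+1}}{\eta(\ell + \sqrt{2} - 1)}$ for all $\ell \geq 0$ (the $\ell = 0$ case being handled separately by the first inequality in \cref{lemma:sufficient:feasi:sk}, which passes the residual through $\pi^{(1/2)}$). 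To finish, I would solve for the smallest $\ell$ making the right-hand side at most $\theta_{t+1}$: requiring $\frac{2R^{t+1}}{\eta(\ell + \sqrt{2} - 1)} \leq \theta_{t+1}$ is equivalent to $\ell \geq \frac{2R^{t+1}}{\eta\theta_{t+1}} + 1 - \sqrt{2}$, so the stopping criterion \eqref{equ:theorem:iBB:convergence:01} is met after at most $\lceil \frac{2R^{t+1}}{\eta\theta_{t+1}} + 2 - \sqrt{2}\rceil$ Sinkhorn iterations (the $+2$ rather than $+1$ accounting for counting $\ell+1$ iterations when the index $\ell$ satisfies the bound, i.e.\ translating the index shift between $b_{\ell+1}$ and the iteration count).

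The only genuinely delicate points are bookkeeping rather than conceptual. First, \cref{lemma:sequence} only provides a bound on $\min_i b_i$, not on $b_\ell$ itself, so I must explicitly use the monotonicity \eqref{equ:pi:decrease:1} to upgrade $\min_{1\leq i\leq\ell} b_i$ to $b_{\ell+1}$; I expect this to be the one step a careful reader would want spelled out. Second, the $\ell = 0$ initialization needs the first (non-chained) inequality of \cref{lemma:sufficient:feasi:sk}, since \eqref{equ:pi:decrease:1} is only asserted for $\ell \geq 1$ — but since we only need an upper bound on the \emph{number} of iterations, and the residual after step one already enters the monotone regime, this introduces at most an additive constant absorbed into the ceiling. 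Third, one should confirm $R^{t+1}$ as defined in \eqref{equ:Rt1} is precisely the constant $\vartheta_2$ appearing in the pre-\cref{lemma:sequence} display; this is immediate from \eqref{equ:alpha:bound}. Everything else is the arithmetic of inverting the $1/(\ell + \sqrt2 - 1)$ rate, which I would not belabor.
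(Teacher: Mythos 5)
Your proposal is correct and follows exactly the route the paper takes (which it leaves mostly implicit, stating only the intermediate bound $\|\pi^{(\ell+1)}\1 - r\|_1 \leq \frac{2R^{t+1}}{\eta(\ell+\sqrt{2}-1)}$ before the lemma): the same choices $a_\ell$, $b_\ell$, $\vartheta_1 = \eta/2$, $\vartheta_2 = R^{t+1}$ fed into \cref{lemma:sequence} via \eqref{lemma:sufficient:decrease:11}, \eqref{equ:pi:decrease:1}, and the display preceding \cref{lemma:sequence}, followed by inverting the $1/(\ell+\sqrt{2}-1)$ rate. Your extra bookkeeping (upgrading the $\min$ to $b_{\ell+1}$ via monotonicity and accounting for the index shift in the $+2$) is exactly the detail the paper suppresses, and it is handled correctly.
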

\begin{remark}
We provide a new analysis of the iteration complexity of the original Sinkhorn iteration based on the decreasing properties developed in \cref{lemma:sufficient:feasi:sk,lemma:sufficien:decrease}. 
 It differs from the approach developed in  \cite{dvurechensky2018computational}, wherein a  switching strategy is adopted to establish the complexity. 
\end{remark}

Given $\alpha \in \Rbb^n$ and $\beta \in \Rbb^n$,   by \cite[Lemma 4.8]{huang2021riemannian}, for any $\iota \in [0,1]$ and $U_1, U_2 \in \Gcal$, 
\be \label{equ:lips}
\|\nabla_U\Lcal(\alpha, \beta, \rev{U_2}) - \nabla_U\Lcal(\alpha, \beta, \iota U_1 + (1 - \iota) U_2)\|_{\Fsf}  \leq 2 \left(\|C\|_{\infty} + 2\|C\|_{\infty}^2/\eta\right) \rev{\iota}\|U_1 - U_2\|_{\Fsf}.  
\ee
With \cref{equ:lips} and \cref{lemma:retraction:property}, following the proof of Lemma 3 in \cite{boumal2019global}, we have 
\be\label{equ:pullback}
\Lcal\left(\alpha, \beta, \Retr_U(\xi)\right) \leq\Lcal(\alpha, \beta, U) + \iprod{\grad_U\Lcal(\x)}{\xi} + \frac{L}{2}\|\xi\|_{\Fsf}^2, ~ \forall~U \in \Gcal,~\xi \in \Tcal_U \Gcal, 
\ee
where $L$ is defined in \eqref{equ:L}.

Applying  \eqref{equ:pullback} with $\alpha = \alpha^t$, $\beta = \beta^t$, $U = U^t$, and $\xi = -\tau \grad_U\Lcal(\x^{t})$,  for $\tau \in (0, 2(1 - \delta_1)/L)$,  we have 
$\Lcal(\alpha^{t},\beta^{t},U^{t+1}) \leq  \Lcal(\x^{t})  - \delta_1 \tau \|\grad_U\Lcal(\x^{t})\|_{\Fsf}^2$. 
In addition, applying \eqref{lemma:sufficient:decrease:11} with  $\alpha^{t+1} = \alpha^{(\ell+1)}$, $\beta^{t+1} = \beta^{(\ell+1)}$, and $\pi^{t+1} = \pi^{(\ell+1)}$ for some $\ell$,  we have 
$\Lcal(\x^{t+1})  \leq \Lcal(\alpha^{t},\beta^{t},U^{t+1})  - \frac\eta2 \rev{(\err_2(\x^{t+1})^2)}.$
Combining the above two assertions  together 
 shows that \cref{assump:sufficient:decrease}  
 holds with $\rho \in [0, \eta/2)$, $\delta_2 = \eta/2 - \rho$, and $\underline \tau \in (0, 2(1 - \delta_1)/L)$.  This also shows that the backtracking line search in \cref{alg:iRBBSs} terminates in at most $\lceil \log \frac{2(1-\delta_1)}{\tau_{\max}L}/\log \sigma\rceil$  trials. 
We now proved  the claims at the beginning of this subsection.
 \subsection{Convergence and complexity of \iRBBS} \label{subsection:iRBBSs:convergence}
Based on the preparation we have made, by using \cref{theorem:convergence}, we can establish the convergence and complexity results of \iRBBS, namely, \cref{alg:iRBBSs}. 

\begin{theorem} \label{theorem:complexity:practical}
 Let $\{\x^t\}$ be the sequence generated by  \cref{alg:iRBBSs}. If  $\epsilon_1 = \epsilon_2 = 0$, we have $\err_1(\x^t) \to 0$ and $\err_2(\x^t) \to 0$. If $\epsilon_1 > 0$ and $\epsilon_2 > 0$, then \cref{alg:iRBBSs} stops in at most $\lceil \Upsilon \max\{\epsilon_1^{-2},  \epsilon_2^{-2}\}\rceil$ iterations for any $\theta_t\geq 0$, where $\Upsilon$ is defined in \eqref{equ:Upsilon}. 
 \end{theorem}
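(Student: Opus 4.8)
The plan is to derive \cref{theorem:complexity:practical} as a corollary of \cref{theorem:convergence} by checking that \cref{alg:iRBBSs} is a genuine instance of the \iRGD~framework \eqref{equ:iRGD} for which \cref{assump:sufficient:decrease} holds in the $\delta_2>0$ regime. First I would identify each outer iteration of \cref{alg:iRBBSs} with one call of the oracle in \eqref{equ:iRGD}: the update $U^{t+1}=\Retr_{U^t}(-\tau_t\xi^t)$ with $\xi^t=\grad_U\Lcal(\x^t)$ is exactly the inexact RGD step \eqref{equ:RBB:inexact}, and the inner Sinkhorn loop is run until \eqref{equ:theorem:iBB:convergence:01} holds, which by \eqref{equ:pi:zeta} together with the identity $(\pi^{(\ell+1)})^{\Tsf}\1=c$ gives $\err_2(\x^{t+1})=\|\pi^{(\ell+1)}\1-r\|_1\le\theta_{t+1}$, i.e.\ the inexactness criterion \eqref{equ:inexact:grad:cond}. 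Crucially, \cref{lemma:sk:finite} guarantees that the Sinkhorn loop terminates after finitely many steps for \emph{any} prescribed $\theta_{t+1}\ge0$, so the oracle is well defined regardless of how $\{\theta_t\}$ is chosen; this is what dispenses with any summability requirement on $\Theta$.

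Second, I would verify \cref{assump:sufficient:decrease}. For part (i), taking $\omega_t=E_t^r-E_\rho(\x^t)$ turns the nonmonotone line-search acceptance rule \eqref{equ:iRBBSs:NML:2} into precisely \eqref{equ:ls:new:general} with $\delta_2=\eta/2-\rho$; and, as recorded after \eqref{equ:iRBBSs:NML:2}, the Zhang--Hager recursion for $E_t^r$ together with the lower bound \eqref{equ:Lcal:opt} yields $\Omega=\sum_{t\ge0}\omega_t\le\frac{\gamma}{1-\gamma}\bigl(E_\rho(\x^0)+\|C\|_\infty+\eta\|\!\log\kappa\|_\infty\bigr)<+\infty$, and $\rho\in[0,\eta/2)$ forces $\delta_2=\eta/2-\rho>0$. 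For part (ii), I would invoke \eqref{equ:pullback} (itself a consequence of the partial Lipschitz bound \eqref{equ:lips} and \cref{lemma:retraction:property}) to show that a trial stepsize is accepted as soon as $\tau_t\le 2(1-\delta_1)/L$; since the backtracking starts from $\tau_t^{(0)}\ge\tau_{\min}$ and shrinks by the factor $\sigma$, it stops after at most $\lceil\log\frac{2(1-\delta_1)}{\tau_{\max}L}/\log\sigma\rceil$ trials and returns $\tau_t\ge\underline\tau:=\min\{\tau_{\min},\,2\sigma(1-\delta_1)/L\}>0$, the uniform lower bound required by part (ii). All of this is essentially the content of \cref{subsection:iRBBSs:verification}, so I would simply assemble those facts.

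Finally, with \cref{assump:sufficient:decrease} confirmed in the $\delta_2>0$ case, \cref{theorem:convergence} applies verbatim: it gives $\err_1(\x^t)\to0$ and $\err_2(\x^t)\to0$ when $\epsilon_1=\epsilon_2=0$, and the bound $\lceil\Upsilon\max\{\epsilon_1^{-2},\epsilon_2^{-2}\}\rceil$ with $\Upsilon$ as in \eqref{equ:Upsilon} (first branch, as $\delta_2>0$) when $\epsilon_1,\epsilon_2>0$. One then only has to note that the termination test in \cref{alg:iRBBSs}, namely $\|\xi^t\|_{\Fsf}\le\epsilon_1$ and $\|\pi^t\1-r\|_1\le\epsilon_2$, is exactly $\err_1(\x^t)\le\epsilon_1$ and $\err_2(\x^t)\le\epsilon_2$, so the algorithm stops no later than the point guaranteed by \cref{theorem:convergence}. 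The main obstacle, and the step I would write out most carefully, is the bookkeeping that yields a single positive $\underline\tau$ valid for all $t$ in spite of the adaptive BB initialization \eqref{equ:tau:ABB:safeguards} and the backtracking; everything else is a direct translation of already-established lemmas.
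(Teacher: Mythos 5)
Your proposal is correct and follows essentially the same route as the paper: the paper's own proof consists precisely of the verification in its preceding subsection that \cref{assump:sufficient:decrease} holds with $\delta_2=\eta/2-\rho>0$ and $\underline\tau\in(0,2(1-\delta_1)/L)$ (via \eqref{equ:lips}, \eqref{equ:pullback}, the Sinkhorn descent lemmas, and the Zhang--Hager bound on $\Omega$), followed by a direct invocation of \cref{theorem:convergence}. Your additional bookkeeping on the backtracking lower bound $\underline\tau=\min\{\tau_{\min},2\sigma(1-\delta_1)/L\}$ only makes explicit what the paper leaves implicit.
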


\begin{remark}\label{remark:iRBBSs}
For \iRBBS, namely, \cref{alg:iRBBSs}, there is  much freedom to choose \rev{$\theta_{t+1}$} in the inexactness condition \eqref{equ:inexact:grad:cond}. More specifically, 
for any $\pi\in \Rbb_{+}^{n\times n}$ with $\|\pi\|_1 = 1$, we have $\|\pi \1 - r\|_1 \leq \|\pi\1\|_1 + \|r\|_1  \leq \|\pi\|_1 + \|r\|_1 = 2$. Therefore, if we choose $\rev{\theta_{t+1}} \equiv 2$, then the Sinkhorn iteration always stops in one iteration, and the total number of Sinkhorn iterations is  $\lceil \Upsilon \max\{\epsilon_1^{-2},  \epsilon_2^{-2}\}\rceil$. If $\rev{\theta_{t+1}} \geq  \rev{\frac{2R^{t+1}}{\eta (\ell_{\max} - 2 + \sqrt{2})}}$ with \rev{$\ell_{\max}\geq 1$} being a preselected positive integer, we know that the Sinkhorn iteration will stop in at most $\ell_{\max}$ iterations and the total number of Sinkhorn iterations is $\lceil \ell_{\max} \Upsilon \max\{\epsilon_1^{-2},  \epsilon_2^{-2}\}\rceil$. 
\end{remark}

Next, we investigate the connection of the approximate stationary points of problems \eqref{prob:sub:RALMexp} and  \eqref{prob:manifold:nlp:2}. 
\begin{theorem}\label{thm:kkt:connection}
Suppose $\tilde \x \in \Mcal$ with $\|\zeta(\tilde \x)\|_1 = 1$  is an $(\epsilon_1, \epsilon_2)$-stationary point of problem  \eqref{prob:sub:RALMexp}, namely, 
$\err_1(\tilde \x) \leq \epsilon_1$ and $\err_2(\tilde \x) \leq \epsilon_2$.  Then,  we have
\begin{subequations}
\begin{align}
&\|\proj_{\Tcal_U\Gcal} ( -2 V_{\hat \pi}\tilde U)\|_{\Fsf} \leq \epsilon_1 + 2 \|C\|_{\infty}\epsilon_2,\label{thm:connection:kkt:00} \\
&\iprod{\hat \pi}{Z(\tilde \x)} \leq  (2  \log n +  \|\!\log \kappa\|_{\var})\eta +  (\|\tilde \alpha\|_{\var} + \|\tilde \beta\|_{\var} + \rev{\|C\|_{\infty}})\epsilon_2,\label{thm:connection:kkt:01} 
\end{align}
\end{subequations}
where $\hat \pi: =\Round(\pi(\tilde \x), \Pi(r,c))$ is a feasible matrix returned by the rounding procedure given in \cite[Algorithm 2]{altschuler2017near} on $\pi(\tilde \x)$. 
\end{theorem}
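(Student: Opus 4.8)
The plan is to relate the approximate stationary point $\tilde\x$ of the subproblem \eqref{prob:sub:RALMexp} to the approximate stationary conditions of the original problem \eqref{prob:manifold:nlp:2} by controlling the error introduced by rounding $\pi(\tilde\x)$ onto $\Pi(r,c)$. The key quantitative tool is the rounding estimate from \cite[Algorithm 2 and Lemma 7]{altschuler2017near}, which guarantees that $\hat\pi = \Round(\pi(\tilde\x),\Pi(r,c))$ satisfies $\|\hat\pi - \pi(\tilde\x)\|_1 \le 2(\|\pi(\tilde\x)\1 - r\|_1 + \|\pi(\tilde\x)^\Tsf\1 - c\|_1)$. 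Since $\tilde\x$ is an $(\epsilon_1,\epsilon_2)$-stationary point of \eqref{prob:sub:RALMexp}, we have $\err_2(\tilde\x) = \|r - \pi(\tilde\x)\1\|_1 + \|c - \pi(\tilde\x)^\Tsf\1\|_1 \le \epsilon_2$, so $\|\hat\pi - \pi(\tilde\x)\|_1 \le 2\epsilon_2$.

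First I would prove \eqref{thm:connection:kkt:00}. Write $\proj_{\Tcal_{\tilde U}\Gcal}(-2V_{\hat\pi}\tilde U) = \proj_{\Tcal_{\tilde U}\Gcal}(-2V_{\pi(\tilde\x)}\tilde U) + \proj_{\Tcal_{\tilde U}\Gcal}(-2V_{\hat\pi - \pi(\tilde\x)}\tilde U)$. The first term has Frobenius norm $\err_1(\tilde\x) \le \epsilon_1$ by definition. For the second term, use that the projection is nonexpansive, that $\|\tilde U\|_\Fsf = \sqrt k$ is bounded (actually one gets a cleaner bound by noting $V_\pi \tilde U$ has a nice structure, but the crude route is $\|2V_{\hat\pi-\pi(\tilde\x)}\tilde U\|_\Fsf \le 2\|V_{\hat\pi-\pi(\tilde\x)}\|\cdot\|\tilde U\|$), and that $V_\pi = \sum_{ij}\pi_{ij}M_{ij}$ with $\|M_{ij}\| \le C_{ij} \le \|C\|_\infty$, so that $\|V_{\hat\pi - \pi(\tilde\x)}\| \le \|C\|_\infty \|\hat\pi - \pi(\tilde\x)\|_1 \le 2\|C\|_\infty\epsilon_2$. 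Combining via the triangle inequality yields \eqref{thm:connection:kkt:00}. One must be a little careful here to avoid an extra factor of $\sqrt k$; I expect the paper exploits that $\langle M_{ij}, \tilde U\tilde U^\Tsf\rangle \le \|C\|_\infty$ to absorb the $\tilde U$-dependence, so I would follow whatever inner-product argument makes the constant exactly $2\|C\|_\infty$.

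Next I would prove \eqref{thm:connection:kkt:01}. Decompose $\iprod{\hat\pi}{Z(\tilde\x)} = \iprod{\pi(\tilde\x)}{Z(\tilde\x)} + \iprod{\hat\pi - \pi(\tilde\x)}{Z(\tilde\x)}$. For the first term, since $\pi(\tilde\x) = \zeta_\eta(\tilde\x,\kappa)/\|\zeta_\eta(\tilde\x,\kappa)\|_1$ with $\|\zeta(\tilde\x)\|_1 = 1$, the definition \eqref{equ:zeta} gives $\varphi_{ij}(\tilde\x) = -\eta\log[\pi(\tilde\x)]_{ij} + \eta\log\kappa_{ij}$, hence $\iprod{\pi(\tilde\x)}{Z(\tilde\x)} = \sum_{ij}[\pi(\tilde\x)]_{ij}\varphi_{ij}(\tilde\x) - \varphi(\tilde\x)_{\min} = -\eta\sum_{ij}[\pi(\tilde\x)]_{ij}\log[\pi(\tilde\x)]_{ij} + \eta\sum_{ij}[\pi(\tilde\x)]_{ij}\log\kappa_{ij} - \varphi(\tilde\x)_{\min}$; the first piece is $\eta H(\pi(\tilde\x)) \le \eta\log n^2 = 2\eta\log n$ (entropy of a distribution on $n^2$ points), the second is bounded by $\eta\|\log\kappa\|_{\var}$ after centering (using $\sum_{ij}[\pi(\tilde\x)]_{ij} = 1$), and $-\varphi(\tilde\x)_{\min} = \max_{ij}\{-\varphi_{ij}(\tilde\x)\}$; I would show $-\varphi(\tilde\x)_{\min}$ is controlled — likely it is nonpositive or absorbed because at a near-minimizer in $(\alpha,\beta)$ the quantity $\eta\log\|\zeta(\tilde\x)\|_1 = 0$ forces $\varphi(\tilde\x)_{\min}$ close to zero. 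For the second term $\iprod{\hat\pi - \pi(\tilde\x)}{Z(\tilde\x)}$, bound it by $\|\hat\pi - \pi(\tilde\x)\|_1 \|Z(\tilde\x)\|_\infty = \|\hat\pi-\pi(\tilde\x)\|_1\|\varphi(\tilde\x)\|_{\var}$, and estimate $\|\varphi(\tilde\x)\|_{\var} \le \|\tilde\alpha\|_{\var} + \|\tilde\beta\|_{\var} + \|C(\tilde U)\|_{\var} \le \|\tilde\alpha\|_{\var} + \|\tilde\beta\|_{\var} + \|C\|_\infty$ using \eqref{equ:M:UU}; multiplying by $\|\hat\pi-\pi(\tilde\x)\|_1 \le 2\epsilon_2$ (or $\le\epsilon_2$ depending on how the rounding constant is tracked) gives the second term of \eqref{thm:connection:kkt:01}.

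The main obstacle I anticipate is bookkeeping the $-\varphi(\tilde\x)_{\min}$ term in the first inner product and getting the constants in \eqref{thm:connection:kkt:01} to match exactly ($2\log n + \|\log\kappa\|_{\var}$ for the $\eta$-coefficient, and no stray factor on $\epsilon_2$): this requires using the normalization $\|\zeta(\tilde\x)\|_1 = 1$ together with the optimality/near-optimality in $(\alpha,\beta)$ carefully, rather than a crude bound. A secondary subtlety is whether the rounding procedure is applied to $\pi(\tilde\x)$ with both marginal violations present or only the $r$-marginal (since $\err_2$ as used in \iRBBS\ sometimes has $c$-marginal already exact); I would check Algorithm~\ref{alg:iRBBSs} and use $\err_2(\tilde\x) \le \epsilon_2$ as the single controlling quantity. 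The rest is routine application of nonexpansiveness of projections, Hölder/Cauchy--Schwarz, and the entropy bound.
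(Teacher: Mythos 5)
Your overall strategy coincides with the paper's: the same two decompositions (split off $\pi(\tilde\x)$ from $\hat\pi$ via the rounding error, bound $\|V_{\hat\pi}-V_{\pi(\tilde\x)}\|_{\Fsf}$ by $\|C\|_\infty\|\hat\pi-\pi(\tilde\x)\|_1$, and for the complementarity term use H\"older plus the identity $\varphi_{ij}(\tilde\x)=\eta(\log\kappa_{ij}-\log[\pi(\tilde\x)]_{ij})$ and the entropy bound $H(\pi(\tilde\x))\le 2\log n$). The $\sqrt{k}$ worry is resolved exactly as you suspect, via $\|A\tilde U\|_{\Fsf}\le\|A\|_{\Fsf}$ for $\tilde U$ with orthonormal columns and $\|M_{ij}\|_{\Fsf}=C_{ij}\le\|C\|_\infty$. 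However, two of the points you flag as uncertainties are genuine loose ends in your sketch. First, the rounding constant: with \cite[Lemma 7]{altschuler2017near} you only get $\|\hat\pi-\pi(\tilde\x)\|_1\le 2\,\err_2(\tilde\x)\le 2\epsilon_2$, which yields $\epsilon_1+4\|C\|_\infty\epsilon_2$ in \eqref{thm:connection:kkt:00} and an extra factor of $2$ on the $\epsilon_2$ term of \eqref{thm:connection:kkt:01}. The paper instead invokes the sharper estimate $\|\hat\pi-\pi(\tilde\x)\|_1\le\|\pi(\tilde\x)\1-r\|_1+\|\pi(\tilde\x)^{\Tsf}\1-c\|_1=\err_2(\tilde\x)$ (constant $1$, from Lemma 3.2 of Chambolle--Contreras), which is what makes the stated constants come out exactly.

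Second, your proposed mechanism for the $-\varphi(\tilde\x)_{\min}$ term would fail: near-optimality in $(\alpha,\beta)$ does \emph{not} force $\varphi(\tilde\x)_{\min}$ close to zero, and it is not nonpositive in general. The correct (and elementary) argument is that $\|\zeta(\tilde\x)\|_1=1$ gives $[\pi(\tilde\x)]_{ij}=[\zeta(\tilde\x)]_{ij}\le 1$, hence $\varphi_{ij}(\tilde\x)=\eta(\log\kappa_{ij}-\log[\pi(\tilde\x)]_{ij})\ge\eta(\log\kappa)_{\min}$, so $-\varphi(\tilde\x)_{\min}\le-\eta(\log\kappa)_{\min}$. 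This must be paired with $\iprod{\log\kappa}{\pi(\tilde\x)}\le(\log\kappa)_{\max}$ --- your claim that the $\log\kappa$ piece alone is bounded by $\eta\|\!\log\kappa\|_{\var}$ ``after centering'' is not correct in isolation; only the \emph{sum} of the two pieces telescopes to $\eta\big((\log\kappa)_{\max}-(\log\kappa)_{\min}\big)=\eta\|\!\log\kappa\|_{\var}$. With these two repairs the rest of your argument goes through and matches the paper's proof.
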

\begin{proof}
We first show that \eqref{thm:connection:kkt:00} is true. 
First, by \cite[Lemma 3.2]{chambolle2022accelerated}, we have $\hat \pi \in \Pi(r,c)$ and 
 \be\label{equ:rounding:estimate} 
 \|\hat \pi - \pi(\tilde \x)\|_1 \leq \|\pi(\tilde \x)\1 - r\|_1 + \|\pi(\tilde \x)^{\Tsf}\1 - c\|_1 = \err_2(\tilde \x)\leq \epsilon_2. 
 \ee 
Second,  by the triangular inequality and the nonexpansive property of the projection operator, we have  $\|\proj_{\Tcal_U\Gcal} (A_1)\|_{\Fsf} \leq 
\|\proj_{\Tcal_U\Gcal} (A_1) - \proj_{\Tcal_U\Gcal} (A_2)\|_{\Fsf}  + \|\proj_{\Tcal_U\Gcal} (A_1)\|_{\Fsf}$\!\\ 
$ \leq \|A_1 - A_2\|_{\Fsf} + \|\proj_{\Tcal_U\Gcal} (A_1)\|_{\Fsf}$, where $A_1, A_2 \in \Rbb^{d\times k}$. Hence, we have
\begin{align} \label{equ:V:pit:hat:pit}
\|\proj_{\Tcal_U\Gcal} ( -2 V_{\hat \pi}\tilde U)\|_{\Fsf} \leq   
  2\|(V_{\hat \pi} - V_{\pi(\tilde \x)})\tilde U\|_{\Fsf}  +  \epsilon_1  \leq   2\|V_{\hat \pi} - V_{\pi(\tilde \x)}\|_{\Fsf}  + \epsilon_1,
\end{align} 
where first  inequality uses $\|\proj_{\Tcal_{\tilde U}\Gcal} ( -2 V_{\pi(\tilde \x)}\tilde U)\|_{\Fsf} \leq \epsilon_1$ and the second inequality is due to the fact that $\|A\tilde U\|_{\Fsf} \leq \|A\|_{\Fsf}$ for any matrix $A$.
Moreover,  observe that  $\|V_{\hat \pi} - V_{\pi(\tilde \x)}\|_{\Fsf}  
  =  \| \sum_{ij} (\hat \pi_{ij} - \pi(\tilde \x)_{ij}) M_{ij} \|_{\Fsf}   \leq  \|C\|_{\infty}\|\hat \pi - \pi(\tilde \x)\|_1$. Combining the above assertions with  
  \eqref{equ:rounding:estimate} and \eqref{equ:V:pit:hat:pit} yields \eqref{thm:connection:kkt:00}. 

Next, we show that  \eqref{thm:connection:kkt:01}  is also true.  By the Cauchy-Schwarz inequality and \eqref{equ:rounding:estimate}, we have
\begin{align}  \label{thm:connection:kkt:d0}
\iprod{\hat \pi}{Z(\tilde \x)} ={}&  \iprod{\hat \pi - \pi(\tilde \x)}{Z(\tilde \x)}  + \iprod{\pi(\tilde \x)}{Z(\tilde \x)}  \leq \|Z(\tilde \x)\|_{\infty} \epsilon_2 + \iprod{\pi(\tilde \x)}{Z(\tilde \x)} \nn \\
\leq{}& (\|\tilde \alpha\|_{\var} + \|\tilde \beta\|_{\var} + \rev{\|C\|_{\infty}}) \epsilon_2 + \iprod{\pi(\tilde \x)}{Z(\tilde \x)}, 
\end{align}
where  the second inequality uses  \eqref{equ:hx:def}, \eqref{equ:Z}, and \eqref{equ:M:UU}.
The remaining is to bound $\iprod{\pi(\tilde \x)}{Z(\tilde \x)}$.  By $\|\zeta(\tilde \x)\|_1 = 1$, \eqref{equ:zeta}, and \eqref{equ:pi:xi:kappa},  we have 
$\varphi(\tilde \x)_{ij}  =  \eta (\log \kappa_{ij} -  \log \pi(\tilde \x)_{ij})$  and $\varphi(\tilde \x)_{\min} \geq \eta (\log \kappa)_{\min}$.  Again with \eqref{equ:Z}, we have 
$\iprod{\pi(\tilde \x)}{Z(\tilde \x)}  = \iprod{\pi(\tilde \x)}{\varphi(\tilde \x)}  - \varphi(\tilde \x)_{\min} \leq   \eta \left(\iprod{\log \kappa}{\pi(\tilde \x)} + H(\pi(\tilde \x)) -  (\log \kappa)_{\min}\right)$,
which, together with $H(\pi(\tilde \x)) \leq 2\log n$ and $\iprod{\log \kappa}{\pi(\tilde \x)} \leq (\log \kappa)_{\max}$, further implies 
$\iprod{\pi(\tilde \x)}{Z(\tilde \x)}  \leq   (2  \log n +  \|\!\log \kappa\|_{\var})\eta.$
This, together with  \eqref{thm:connection:kkt:d0},  yields \eqref{thm:connection:kkt:01}. The proof is completed. 
\end{proof}

 Let $\{\x^t\}$ be the sequence generated by  \cref{alg:iRBBSs}.  By \eqref{equ:sk:prop1}, we know that $\|\zeta(\x^t)\|_1 = 1$. 
By $\|C(U)\|_{\infty} \leq \|C\|_{\infty}$, \eqref{equ:L}, \eqref{equ:alpha:bound}, \eqref{equ:Rt1}, 
 and  with the help of \cref{thm:kkt:connection}, \cref{remark:kkt:strong,remark:iRBBSs}, we can immediately establish the following complexity result of \cref{alg:iRBBSs}. Note that the following complexity to attain an $(\epsilon_1',\epsilon_2')$-stationary point is in the same order as $\epsilon_1'$ and $\epsilon_2'$ to that of \RABCDv~in \cite{huang2021riemannian}.  However, our algorithm can return a  stronger $(\epsilon_1',\epsilon_2')$-stationary point.  

 \begin{corollary}\label{prop:complexity:prw}
By choosing $\eta = \epsilon_2'/(4\log n + 2 \|\!\log \kappa\|_{\var})$, $\epsilon_1 = \epsilon_1'/2$, and  $\epsilon_2 = \rev{\min\{\epsilon_1'/(4\|C\|_{\infty}),\epsilon_2'/(4 \eta \Psi+ 6\|C\|_{\infty})\}}$,  where $\Psi$ is defined after \eqref{equ:Rt1},   \cref{alg:iRBBSs} can return an $(\epsilon_1',\epsilon_2')$-stationary point of  problem \eqref{prob:manifold:nlp:2}  
in  
$\Ocal(T_{\epsilon_1', \epsilon_2'})$ iterations  with 
\[
T_{\epsilon_1', \epsilon_2'} = \max\{(\epsilon_1')^{-2},(\epsilon_2')^{-2}\}(\epsilon_2')^{-1}.
\]  
If $\theta_t \geq \rev{\frac{2R^{t}}{\eta (\ell_{\max} - 2 + \sqrt{2})}}$,  the total number of Sinkhorn iterations  is  $\Ocal(\ell_{\max} T_{\epsilon_1', \epsilon_2'})$ and  
the total arithmetic operation complexity is 
$\Ocal\big( (n^2 (k +  \rev{\ell_{\max}}) + ndk +dk^2) T_{\epsilon_1', \epsilon_2'}\big).$
 Here, the $\Ocal(\cdot)$ hides the constants related to $E_{\rho}(\x^{0})$, $\Psi$, $L_1$, $L_2$,  $\log n$, and $\|C\|_\infty$.
 \end{corollary}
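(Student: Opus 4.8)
The plan is to derive this corollary by chaining together the subproblem complexity bound of \cref{theorem:complexity:practical} with the stationarity-transfer estimates of \cref{thm:kkt:connection}, and then to carefully track how the constants $\Upsilon$, $\epsilon_1$, and $\epsilon_2$ scale with $\epsilon_1'$ and $\epsilon_2'$ under the prescribed choices of $\eta$, $\epsilon_1$, and $\epsilon_2$. Throughout, $\rho$ is fixed as a constant fraction of $\eta$ (for instance $\rho = 0$), so that $\delta_2 = \eta/2 - \rho = \Theta(\eta)$.

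First I would handle the translation from the subproblem \eqref{prob:sub:RALMexp} to problem~\eqref{prob:manifold:nlp:2}. Every iterate of \cref{alg:iRBBSs} obeys $\|\zeta(\x^t)\|_1 = 1$ by \eqref{equ:sk:prop1}, so \cref{thm:kkt:connection} applies to the point $\tilde\x = \x^t$ returned upon $\err_1(\x^t) \le \epsilon_1$ and $\err_2(\x^t) \le \epsilon_2$. Substituting $\epsilon_1 = \epsilon_1'/2$ into \eqref{thm:connection:kkt:00} and using $\epsilon_2 \le \epsilon_1'/(4\|C\|_{\infty})$ gives $\|\proj_{\Tcal_{\tilde U}\Gcal}(-2V_{\hat\pi}\tilde U)\|_{\Fsf} \le \epsilon_1'$; the choice of $\eta$ makes $(2\log n + \|\!\log\kappa\|_{\var})\eta = \epsilon_2'/2$, and by \eqref{equ:alpha:bound}--\eqref{equ:Rt1} together with $\|C(U)\|_{\var} \le \|C\|_{\infty}$ one has $\|\tilde\alpha\|_{\var},\|\tilde\beta\|_{\var} \le \|C\|_{\infty} + \eta\Psi$, so $\|\tilde\alpha\|_{\var} + \|\tilde\beta\|_{\var} + \|C\|_{\infty} \le 6\|C\|_{\infty} + 4\eta\Psi$; combined with $\epsilon_2 \le \epsilon_2'/(4\eta\Psi + 6\|C\|_{\infty})$ this turns \eqref{thm:connection:kkt:01} into $\iprod{\hat\pi}{Z(\tilde\x)} \le \epsilon_2'$. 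By \cref{remark:kkt:strong}, $(\tilde\x, -\varphi(\tilde\x)_{\min})$ is then an $(\epsilon_1', \epsilon_2')$-stationary point of \eqref{prob:manifold:nlp:2}.

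Next I would count the outer iterations. By \cref{theorem:complexity:practical}, $\x^t$ is produced within $\lceil \Upsilon \max\{\epsilon_1^{-2}, \epsilon_2^{-2}\}\rceil$ iterations, so it remains to bound this in terms of $\epsilon_1', \epsilon_2'$. Since $\eta = \Theta(\epsilon_2')$ with constants depending only on $\log n$ and $\Psi$, we have $\eta\Psi = \Ocal(1)$, hence $4\eta\Psi + 6\|C\|_{\infty} = \Theta(\|C\|_{\infty})$ and $\epsilon_2 = \Theta(\min\{\epsilon_1', \epsilon_2'\})$; therefore $\max\{\epsilon_1^{-2}, \epsilon_2^{-2}\} = \Theta(\max\{(\epsilon_1')^{-2}, (\epsilon_2')^{-2}\})$. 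For $\Upsilon$ in \eqref{equ:Upsilon}, recall from \cref{subsection:iRBBSs:verification} that \cref{assump:sufficient:decrease} holds for \iRBBS~with $\delta_2 = \Theta(\eta)$ and, by \eqref{equ:L}, $L = \Theta(1/\eta)$, so $\underline\tau$ may be taken $\Theta(\eta)$ and $\delta_1\underline\tau = \Theta(\eta)$; thus $\min\{\delta_1\underline\tau, \delta_2\} = \Theta(\epsilon_2')$, while the numerator $\widetilde{\Upsilon} = E_{\rho+\delta_2}(\x^0) + \Omega + \|C\|_{\infty} + \eta\|\!\log\kappa\|_{\infty} = \Ocal(1)$ (using the bound on $\Omega$ noted after \eqref{equ:iRBBSs:NML:2}). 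Hence $\Upsilon = \Ocal((\epsilon_2')^{-1})$ and $\Upsilon \max\{\epsilon_1^{-2}, \epsilon_2^{-2}\} = \Ocal(\max\{(\epsilon_1')^{-2}, (\epsilon_2')^{-2}\}(\epsilon_2')^{-1}) = \Ocal(T_{\epsilon_1', \epsilon_2'})$, with all hidden constants depending only on $E_\rho(\x^0)$, $\Psi$, $L_1$, $L_2$, $\log n$, and $\|C\|_{\infty}$.

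Finally I would assemble the Sinkhorn and arithmetic counts. With $\theta_t \ge \frac{2R^t}{\eta(\ell_{\max}-2+\sqrt 2)}$, \cref{lemma:sk:finite} bounds each Sinkhorn call in \cref{alg:iRBBSs} by $\ell_{\max}$ iterations; since the backtracking line search terminates within the bound established at the end of \cref{subsection:iRBBSs:verification}, each outer iteration invokes the Sinkhorn subroutine $\Ocal(1)$ times, giving $\Ocal(\ell_{\max} T_{\epsilon_1', \epsilon_2'})$ Sinkhorn iterations in total. Each Sinkhorn step \eqref{equ:SK}/\eqref{equ:SK:2} costs $\Ocal(n^2)$, each $U$-update \eqref{equ:RBB:inexact} costs $\Ocal(ndk + n^2k + dk^2)$ via the factored expression for $V_\pi U$ displayed before \cref{subsection:iRBBSs:verification}, and the final rounding $\Round(\pi(\tilde\x), \Pi(r,c))$ adds $\Ocal(n^2)$; summing over $\Ocal(T_{\epsilon_1', \epsilon_2'})$ outer iterations yields the stated total $\Ocal((n^2(k+\ell_{\max}) + ndk + dk^2) T_{\epsilon_1', \epsilon_2'})$. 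The \emph{main obstacle} is the second step: one must check that every quantity feeding into $\Upsilon$ other than $\eta$ itself---namely $R^t$ uniformly in $t$, $\widetilde{\Upsilon}$, $\underline\tau$, and $\epsilon_2$---is bounded by constants independent of $\epsilon_1'$ and $\epsilon_2'$, so that the sole $\epsilon$-dependence enters through $\eta = \Theta(\epsilon_2')$, contributing the factor $1/\eta$ in $L$ (and hence in $\Upsilon$) that produces exactly the extra $(\epsilon_2')^{-1}$; the rest is bookkeeping.
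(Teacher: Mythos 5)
Your proposal is correct and follows essentially the same route the paper takes, which is simply to chain \cref{theorem:complexity:practical}, \cref{thm:kkt:connection}, \cref{remark:kkt:strong}, \cref{remark:iRBBSs}, and \cref{lemma:sk:finite} while tracking that $\widetilde\Upsilon=\Ocal(1)$ and $\min\{\delta_1\underline\tau,\delta_2\}=\Theta(\eta)=\Theta(\epsilon_2')$ so that $\Upsilon=\Ocal((\epsilon_2')^{-1})$. One small arithmetic slip: your own premise $\|\tilde\alpha\|_{\var},\|\tilde\beta\|_{\var}\le\|C\|_\infty+\eta\Psi$ gives $\|\tilde\alpha\|_{\var}+\|\tilde\beta\|_{\var}+\|C\|_\infty\le 3\|C\|_\infty+2\eta\Psi$ (not $6\|C\|_\infty+4\eta\Psi$), and it is this tighter bound that makes the second term of \eqref{thm:connection:kkt:01} equal to $\epsilon_2'/2$ and the total exactly $\epsilon_2'$ under the prescribed choice of $\epsilon_2$.
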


 Finally, suppose $U$ is fixed other than a variable and choose $\eta = \Ocal(\epsilon)$. In this case, we know from \cref{thm:kkt:connection} that by computing an $\epsilon$-stationary point of the regularized OT problem \eqref{prob:sub:RALMexp:0:1}, one can return a feasible $\epsilon$-stationary point of the OT problem \eqref{prob:OT} evaluated by the primal-dual gap other than the primal gap used in the literature, such as \cite[Theorem 1]{altschuler2017near}.  This might be of independent interest to the OT community.  

\section{Experimental results}\label{sec:experiments} 
In this section, we report plenty of numerical results to evaluate the performance of our proposed approaches to computing the PRW distance \eqref{equ:PRW}. 
We implemented all methods in MATLAB  2021b and performed all the experiments in macOS Catalina 10.15 on a Macbook Pro with a 2.3GHz 8-core Intel Core i9.
We follow the ways in \cite{lin2020projection,huang2021riemannian,paty2019subspace} to generate the synthetic and real datasets.  
\begin{dataset}[Synthetic dataset: Fragmented hypercube \cite{paty2019subspace}]\label{data:hypercube}
Define a map $P(x) = x + 2\sign(x) \odot \sum_{k = 1}^{k^*} e_k$, where $\sign(\cdot)$ is taken elementwise,  $k^*\in \{1, \ldots, d\}$ and $\{e_1, \ldots, e_{k^*}\}$ is the canonical basis of $\Rbb^{d}$. 
Let $\mu = \mathcal{U}([-1,1]^d)$ be the uniform distribution over an hypercube and  $\nu = P_{\#}(\mu)$  be  the pushforward of $\mu$ under the map $P$.   We set $k = k^* = 2$ and take both the weight vectors $r$ and $c$  as $\1/n$. 
\end{dataset}

\begin{dataset}[Real datasets: Shakespeare operas \cite{lin2020projection}]\label{data:shakespeare}
  We consider six Shakespeare operas, including  Henry V (H5),  Hamlet (H), Julius Caesar (JC),  The Merchant of Venice (MV), Othello (O), and Romeo and Juliet (RJ).  We follow the way in \cite{lin2020projection} to transform the preprocessed script as a measure over $\Rbb^{300}$. 
    Each postprocessing script corresponds to a matrix $X\in \Rbb^{300\times n_X}$. The values $n_X$ of H5, H, JC, MV, O, and RJ are 1303, 1481, 910, 1008, 1148, and 1141, respectively.   The weight vector $r$ or $c$ is  taken as $\1_{n_X}/n_X$. We choose $k = 2$.
\end{dataset}

\begin{dataset}[Real datasets: digits from MNIST datasets \cite{lin2020projection}]\label{data:mnist}
 For each digit $0, 1, \ldots, 9$, we extract the 128-dimensional features from a pre-trained convolutional neural network.   Each digit corresponds to a matrix $X \in \Rbb^{128 \times n_X}$.  The values $n_X$ of $0, 1, \ldots, 9$ are $980$, 1135, 1032, 1010, 982, 892, 958, 1028, 974, and 1009,  respectively.
 The weight vector $r$ or $c$ is  taken as $\1_{n_X}/n_X$.  We choose $k = 2$. 
\end{dataset}
\subsection{Comparison on solving the subproblem \eqref{prob:sub:RALMexp}} \label{subsec:num:subprob}
Since \cite{huang2021riemannian}  has shown the superiority of \RABCDv~over \RAGASv~proposed in \cite{lin2020projection},  in this subsection, we mainly compare our proposed  \cref{alg:iRBBSs}  with  \RABCDv.  Subproblem \eqref{prob:sub:RALMexp} with $\kappa$ being an all-one matrix and relatively small  $\eta$ is used to compute the PRW distance in  \cite{huang2021riemannian}.  We follow the same settings of $\eta$ in \cite{huang2021riemannian}.  For \cref{data:hypercube}, we choose $\eta = 0.2$ when $d < 250$ and $\eta = 0.5$ otherwise.  For  \cref{data:shakespeare}, we set $\eta  = 0.1$. For \cref{data:mnist}, we set $\eta = 8$.

{\it Parameters of 
\cref{alg:iRBBSs}.} We choose  $\tau^{\min} = 10^{-10}, \tau^{\max} = 10^{10}$, $\gamma = 0.85$, $\tau_0 = 10^{-3}$, $\sigma = 1/2$, $\delta_1 = 10^{-4}$, and $\rho = 0.49\eta$.  As for tolerances,  we  always set  $\epsilon_1  = 2 \|C\|_{\infty} \epsilon_2$  (motivated by \eqref{thm:connection:kkt:00}) and  $\epsilon_2 = 10^{-6} \max\{\|r\|_{\infty}, \|c\|_{\infty}\}$.
To make the residual error more comparable, we  choose 
\be\label{equ:theta:choice:numerical} 
\theta_0 = 1, \quad \theta_{t+1} = \max\left\{{\theta}/{(2\|C\|_{\infty})}\cdot\err_1(\x^{t}), \epsilon_2\right\}, \quad \forall~t \geq 0.
\ee
For numerical tests in this subsection,  we can always use the Sinhorn iteration \eqref{equ:SK:2} 
and we find that with such a choice, the  Sinkhorn iteration \eqref{equ:SK:2}  always stops in a relatively small number of iterations. 
 We consider several different values of  $\theta$ in \eqref{equ:theta:choice:numerical} and the corresponding version of \cref{alg:iRBBSs} is denoted as \iRBBS-$\theta$ for brevity.  Note that when $\theta = 0$, we have $\theta_{t+1} \equiv \epsilon_2$ for all $t \geq 0$, which means that we calculate $\grad q(U^t)$ almost exactly; when $\theta = +\infty$, we have $\theta_{t+1} \geq 2$ for all $t \geq 0$ and the Sinkhorn iteration always stop in 1 iteration, as discussed in \cref{remark:iRBBSs}. 
 
{\it Parameters of \RABCDv~and \RAGASv.} As stated in \cite[Remark 6.1]{huang2021riemannian}, to achieve the best performance of \RABCDv, one has to spend some efforts to tune the stepsizes. Here, we adopt the stepsizes (with a slight modification, marked in  \textit{italic} type,  to have better performance for some cases)  used in \cite{huang2021riemannian}.    
For \cref{data:shakespeare},  $\tau_{\texttt{RBCD}} = \textit{0.09}$  if the  instance is X/RJ and $\tau_{\texttt{RBCD}} = 0.1$ otherwise; $\tau_{\texttt{RABCD}} = \textit{0.0015}$
if the  instance is X/RJ with X $\neq$ H, $\tau_{\texttt{RABCD}} = \textit{0.001}$ if the  instance is H/RJ  and $\tau_{\texttt{RABCD}} = 0.0025$ otherwise.   For \cref{data:hypercube},  $\tau_{\texttt{RBCD}} = \tau_{\texttt{RABCD}} = 0.001$.  For \cref{data:mnist}, $\tau_{\texttt{RBCD}} = 0.004$, and we do not test \RABCD~for this dataset since the well-chosen stepsize is not provided in \cite{huang2021riemannian}.
 We stop  \RABCDv~when  $\err_1(\alpha^{t}, \beta^{t-1}, U^t) \leq \epsilon_1$ and $\err_2(\alpha^{t}, \beta^{t-1}, U^t) \leq \epsilon_2$ or the maximum iteration number reaches 5000.

 {\it Initial points and retraction.} We choose $\alpha^{-1} = \mathbf{0}$, $\beta^{-1} = \mathbf{0}$ for \iRBBS~and $\alpha^0 = \mathbf{0}, \beta^0 = \mathbf{0}$ for \RABCDv.   As for $U^0$, we choose $U^0 \in \argmax\nolimits_{U \in \Gcal}   \langle V_{\pi^0}, UU^{\Tsf}\rangle$ with  $\pi^0 \in \Pi(r, c)$ for all methods. Here,  $\pi^0$ is formulated by firstly generating a matrix $\tilde \pi^0$ with each entry randomly drawn from the standard uniform distribution on $(0,1)$ and then rounding $\tilde \pi^0/\|\tilde \pi^0\|_1$ to $\pi^0$ via Algorithm 2 proposed in \cite{altschuler2017near}.    The retraction operator in all the above methods is taken as the QR-retraction.
 
 For \cref{data:hypercube}, we randomly generate 10 instances for each $(n,d)$ pair,  each equipped with 5 randomly generated initial points. For each instance of  \cref{data:shakespeare,data:mnist}, we randomly generate  20 initial points. In the tables and figures in this subsection,  we report the average performance. 
The term ``nGrad'' means the total number of calculating  \rev{$\grad_U\Lcal(\x)$}, ``nSk'' means the total number of Sinkhorn iterations, and ``time'' represents the running time in seconds evaluated by ``tic-toc'' commands.  Note that for \RABCDv~and \iRBBS-inf, nGrad is always equal to nSk. Moreover, considering that we aim to compute the PRW distance,  with $U^t$ returned by some method in hand, we invoke  Gurobi 9.5 (\url{https://www.gurobi.com}) to compute a relatively accurate  PRW distance, denoted as $\mathrm{PRW_p}= \min_{\pi \in \Pi(r, c)} f(\pi, U^t)$. 
Note that a larger $\mathrm{PRW_p}$ means a higher solution quality of the corresponding $U^t$.


\begin{figure}[!tbp]
\centering
\subfloat[$n = 100$]{
\begin{minipage}[b]{0.242\textwidth}
\includegraphics[width=1\linewidth]{./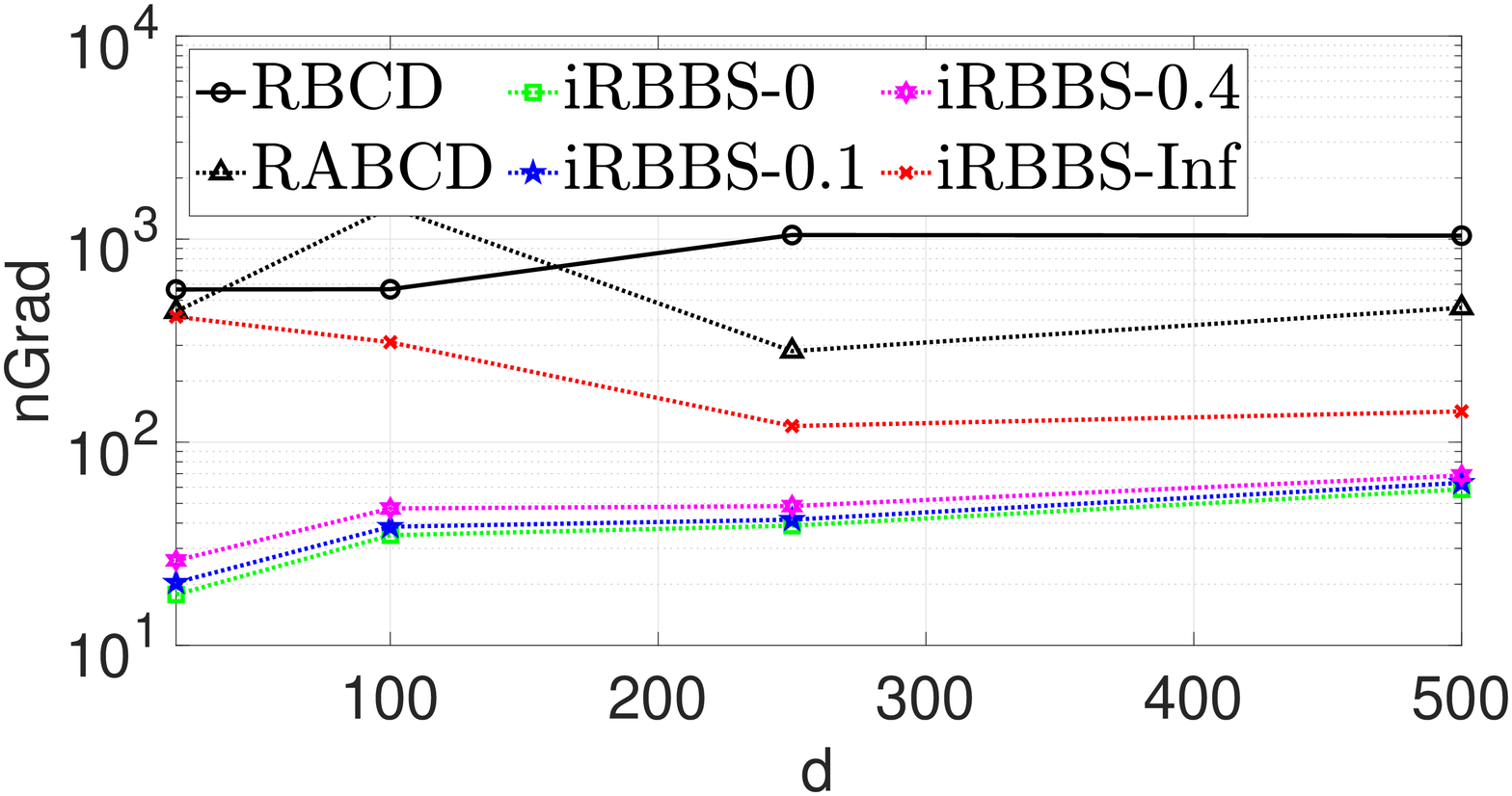}\\
\includegraphics[width=1\linewidth]{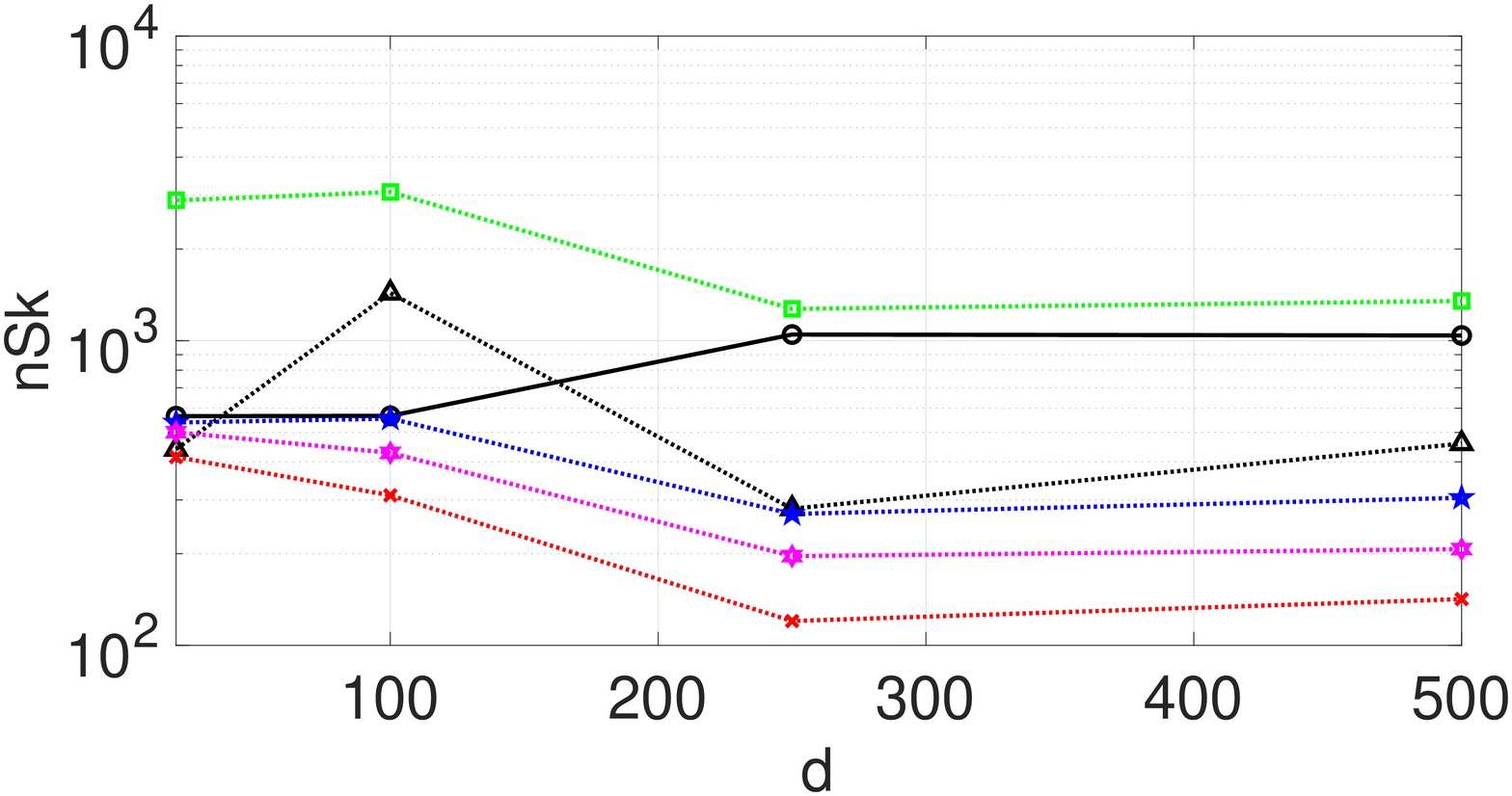}\\
\includegraphics[width=1\linewidth]{./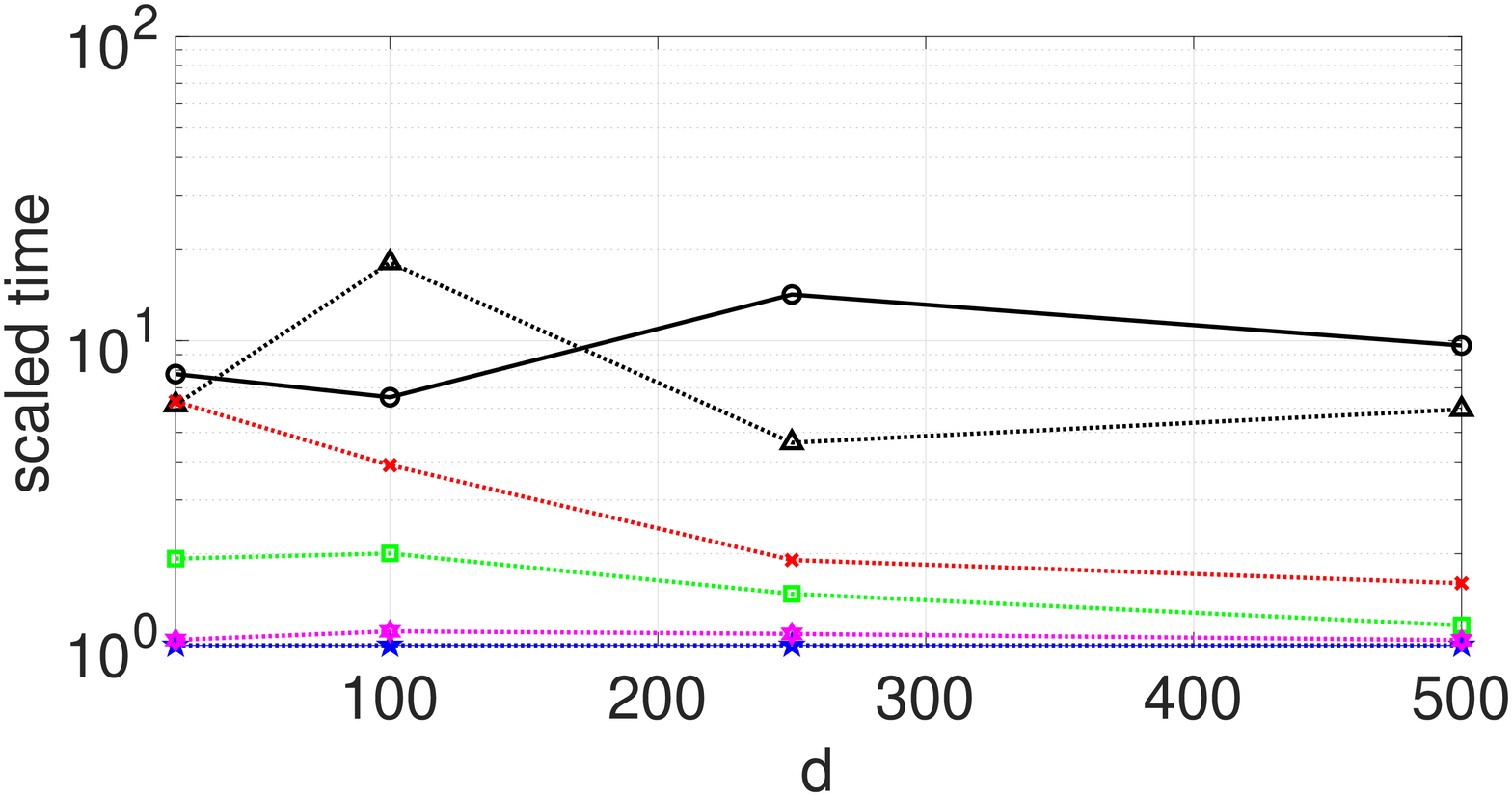}
\end{minipage}
}
\hspace{-3mm}
\subfloat[$d = 50$]{
\begin{minipage}[b]{0.245\textwidth}
\includegraphics[width=1\linewidth]{./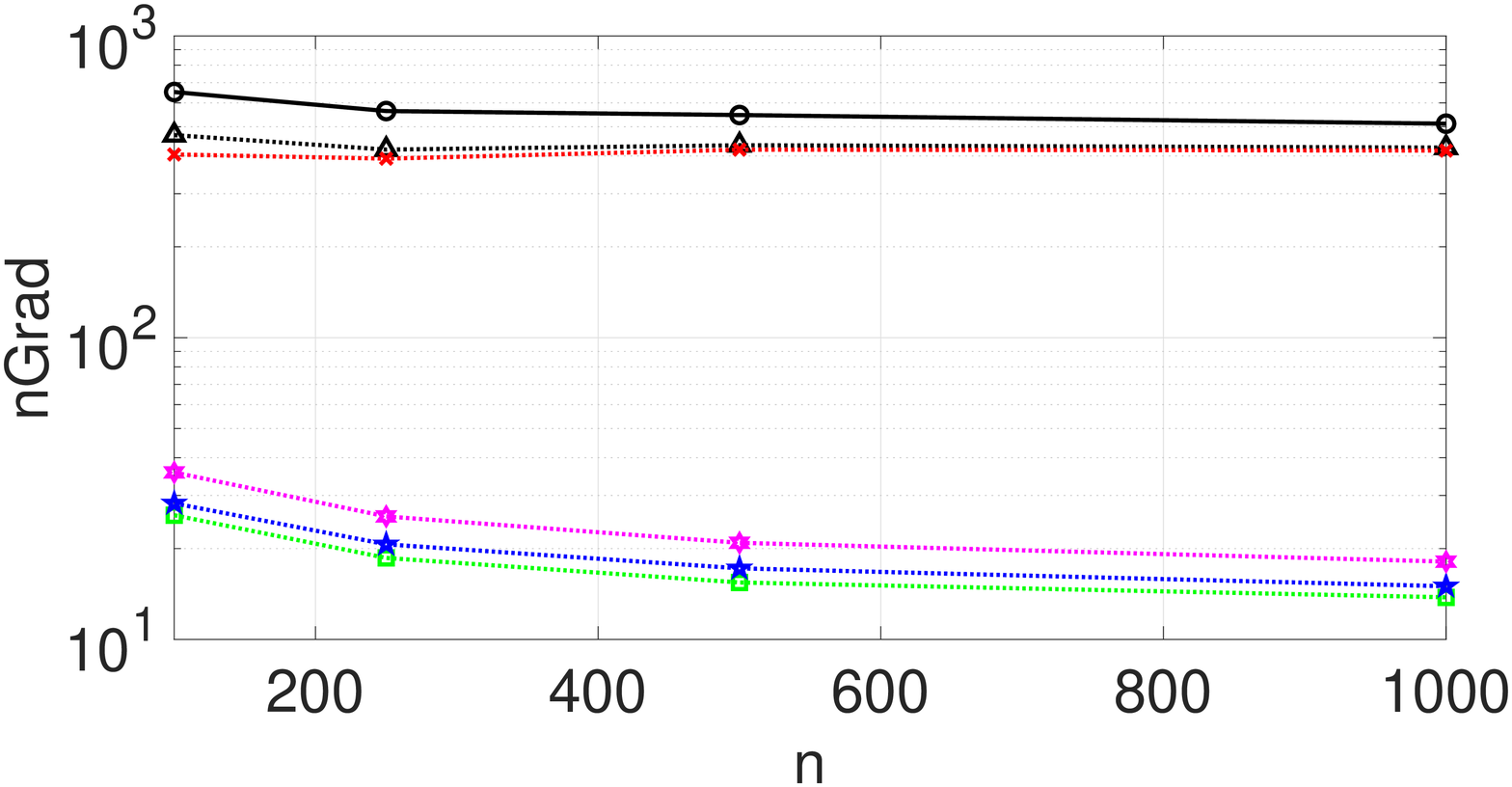}\\
\includegraphics[width=1\linewidth]{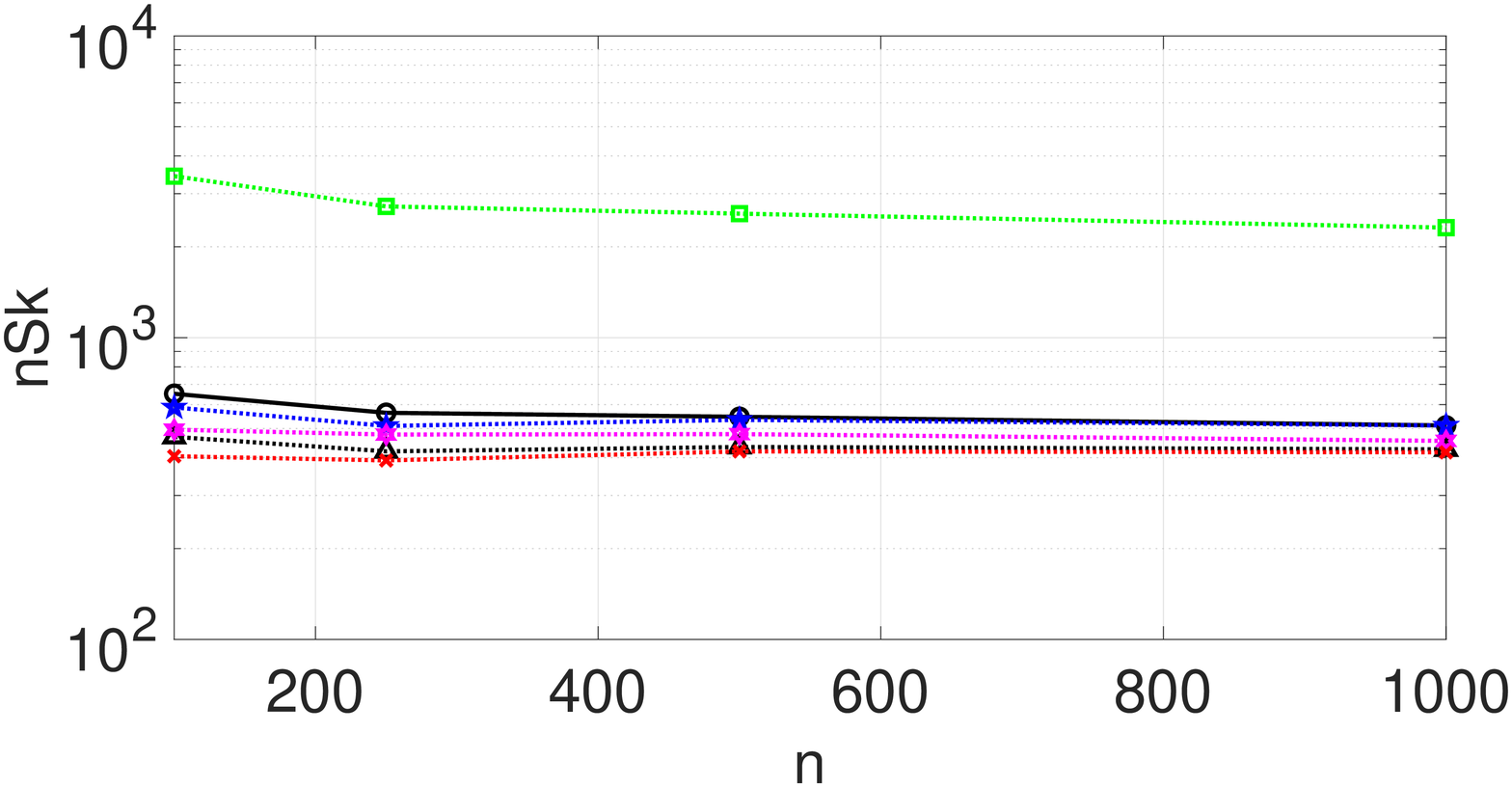}\\
\includegraphics[width=1\linewidth]{./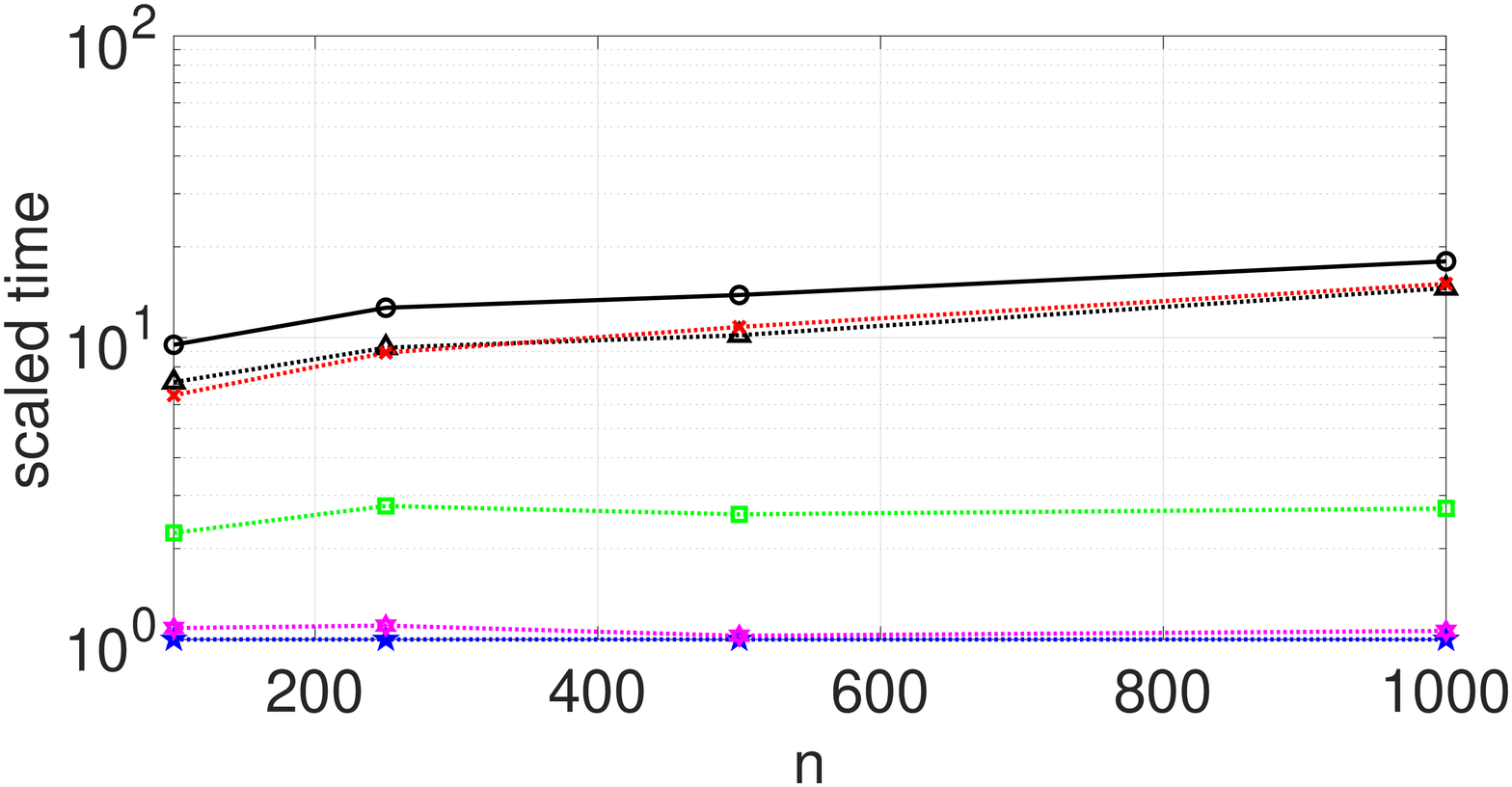}
\end{minipage}
}
\hspace{-3mm}
\subfloat[$n = d$]{
\begin{minipage}[b]{0.242\textwidth}
\includegraphics[width=1\linewidth]{./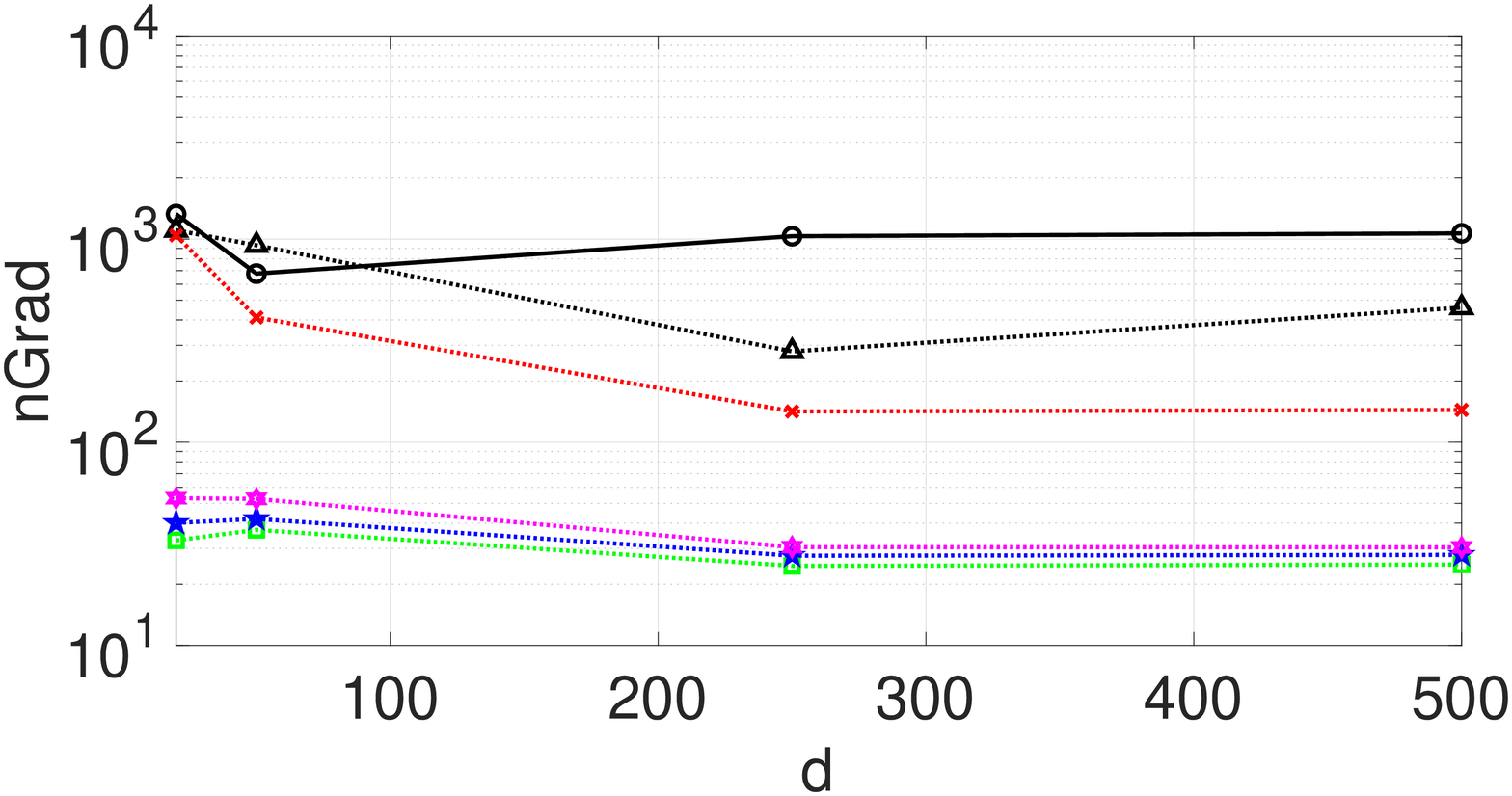}\\
\includegraphics[width=1\linewidth]{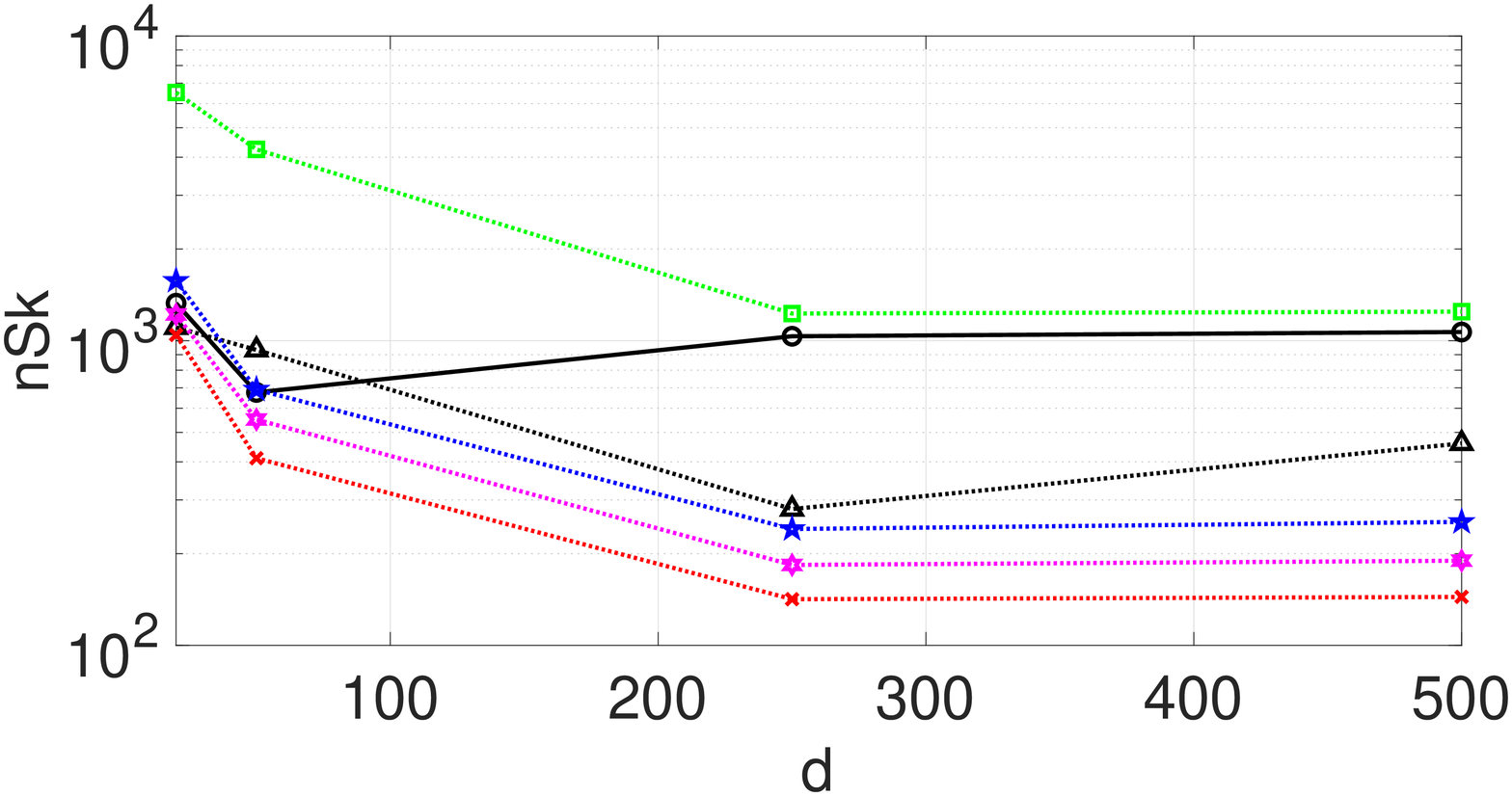}\\
\includegraphics[width=1\linewidth]{./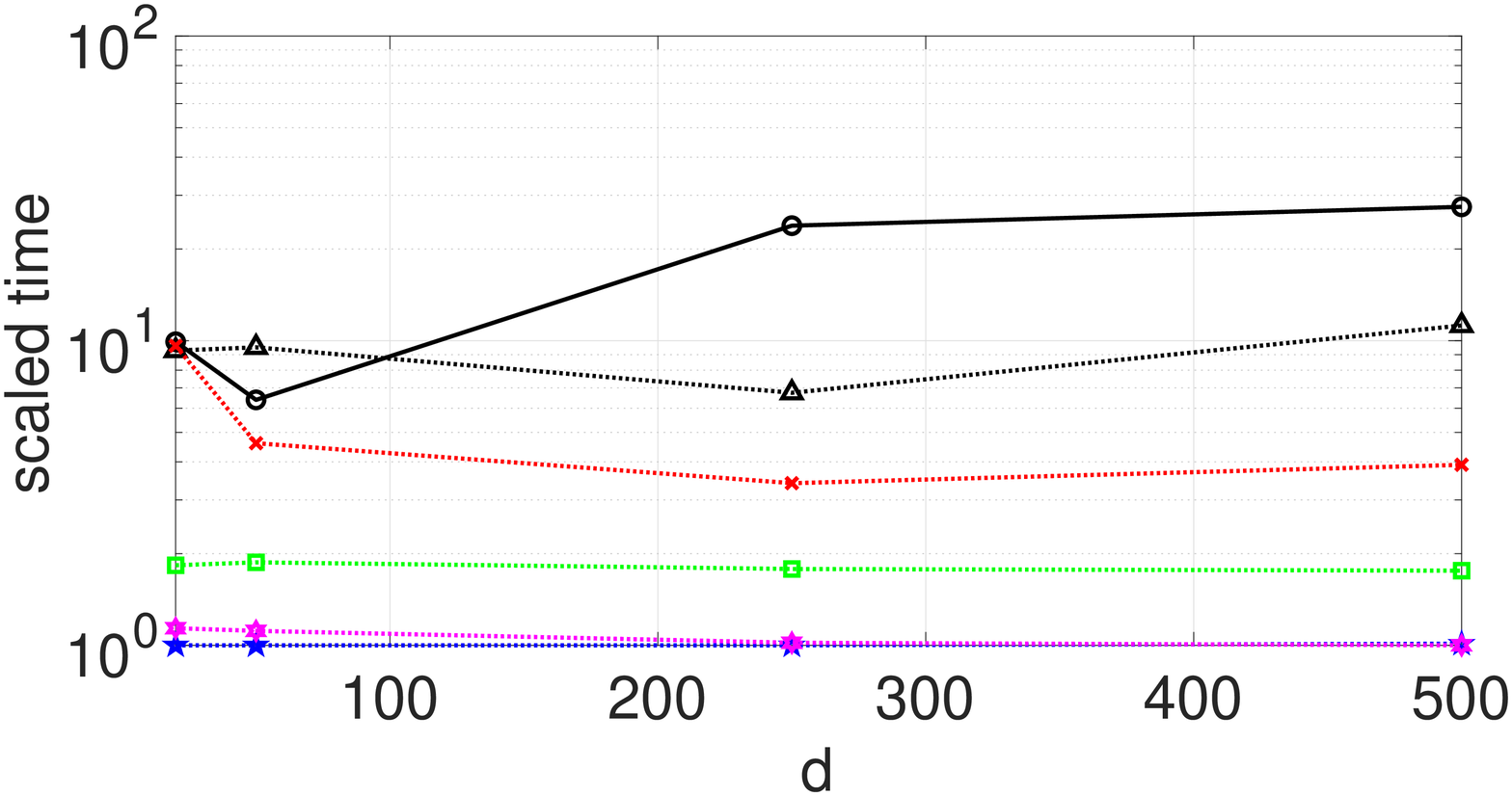}
\end{minipage}
}
\hspace{-3mm}
\subfloat[$n = 10d$]{
\begin{minipage}[b]{0.242\textwidth}
\includegraphics[width=1\linewidth]{./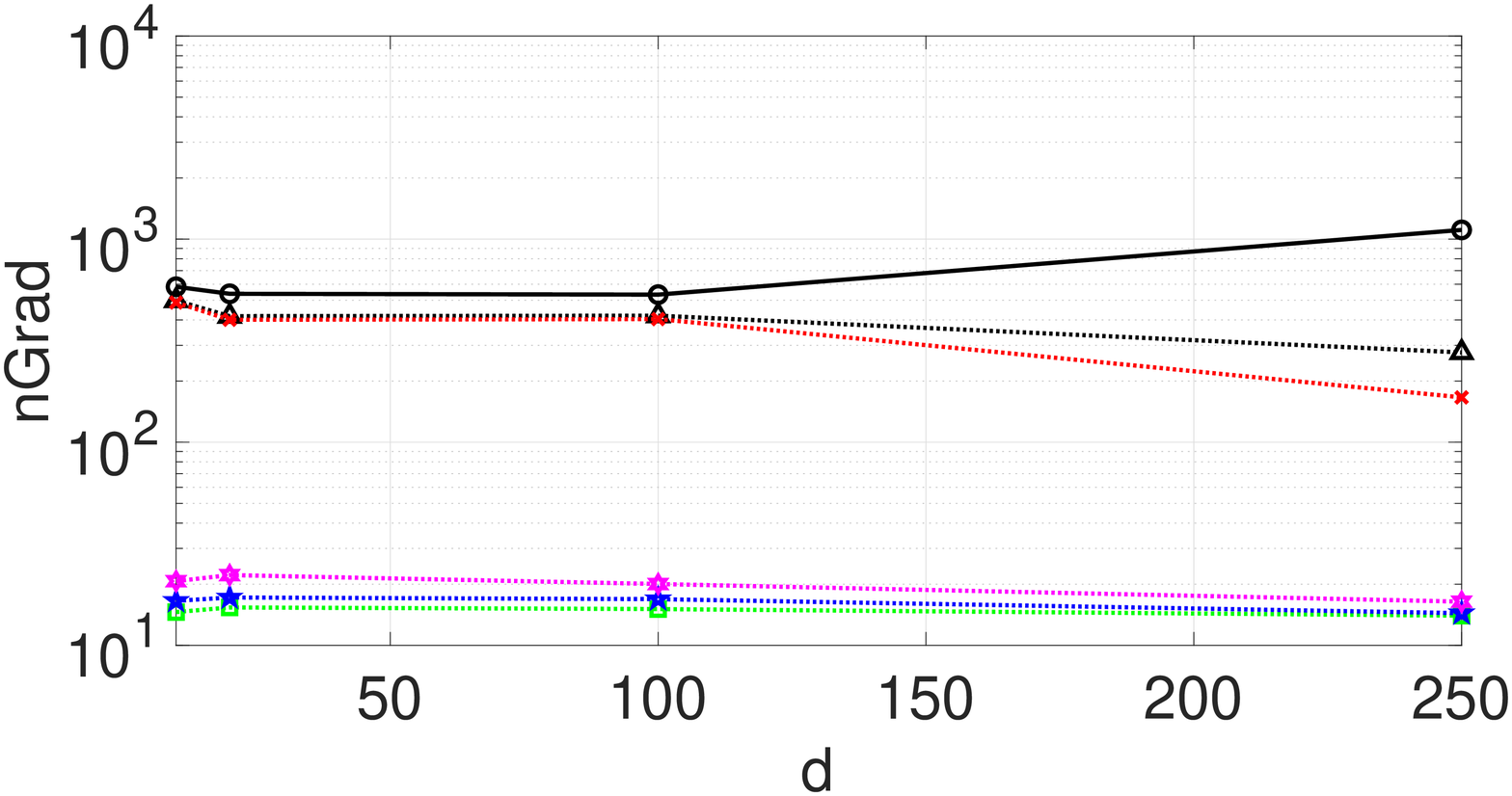}\\
\includegraphics[width=1\linewidth]{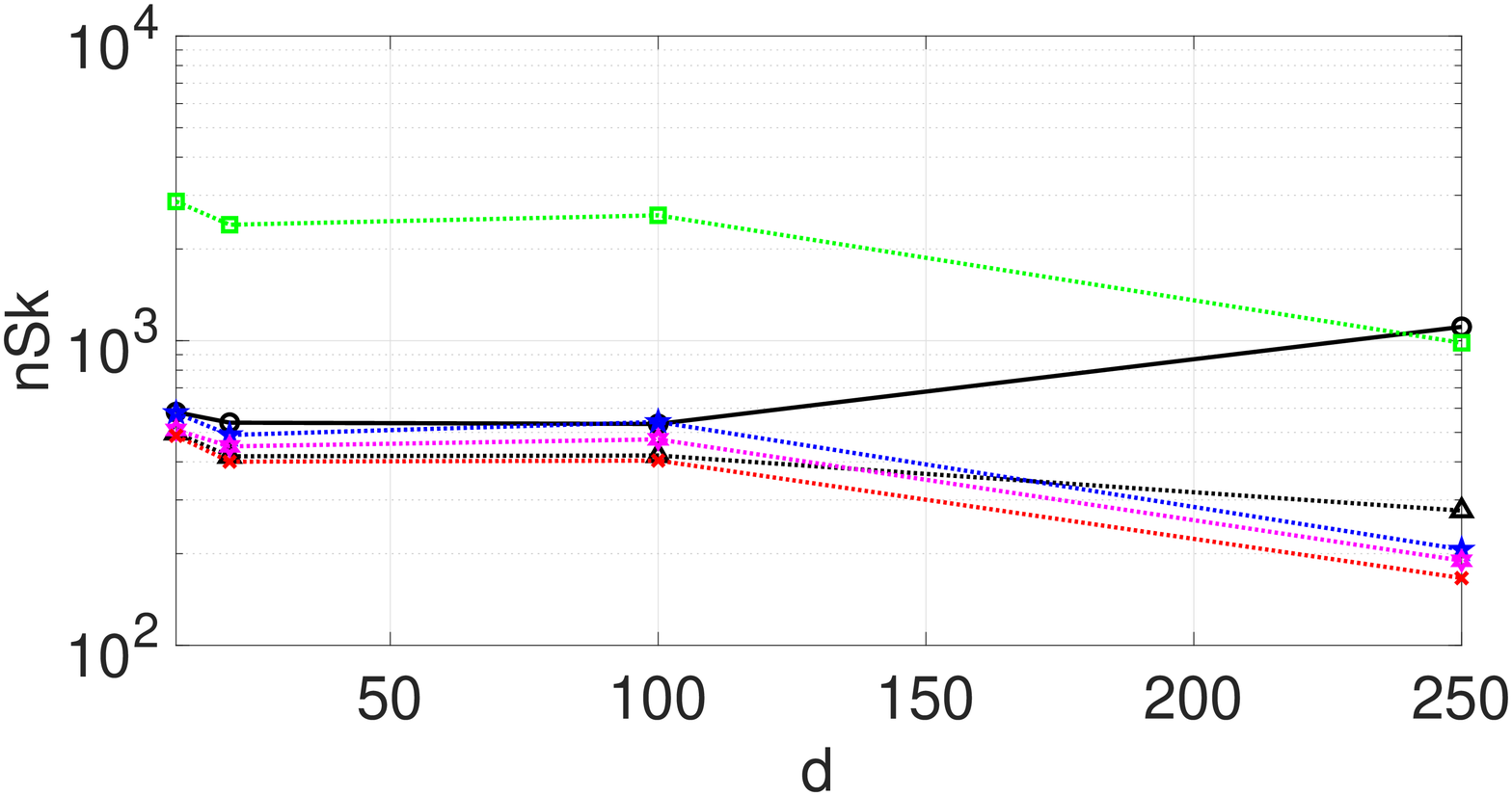}\\
\includegraphics[width=1\linewidth]{./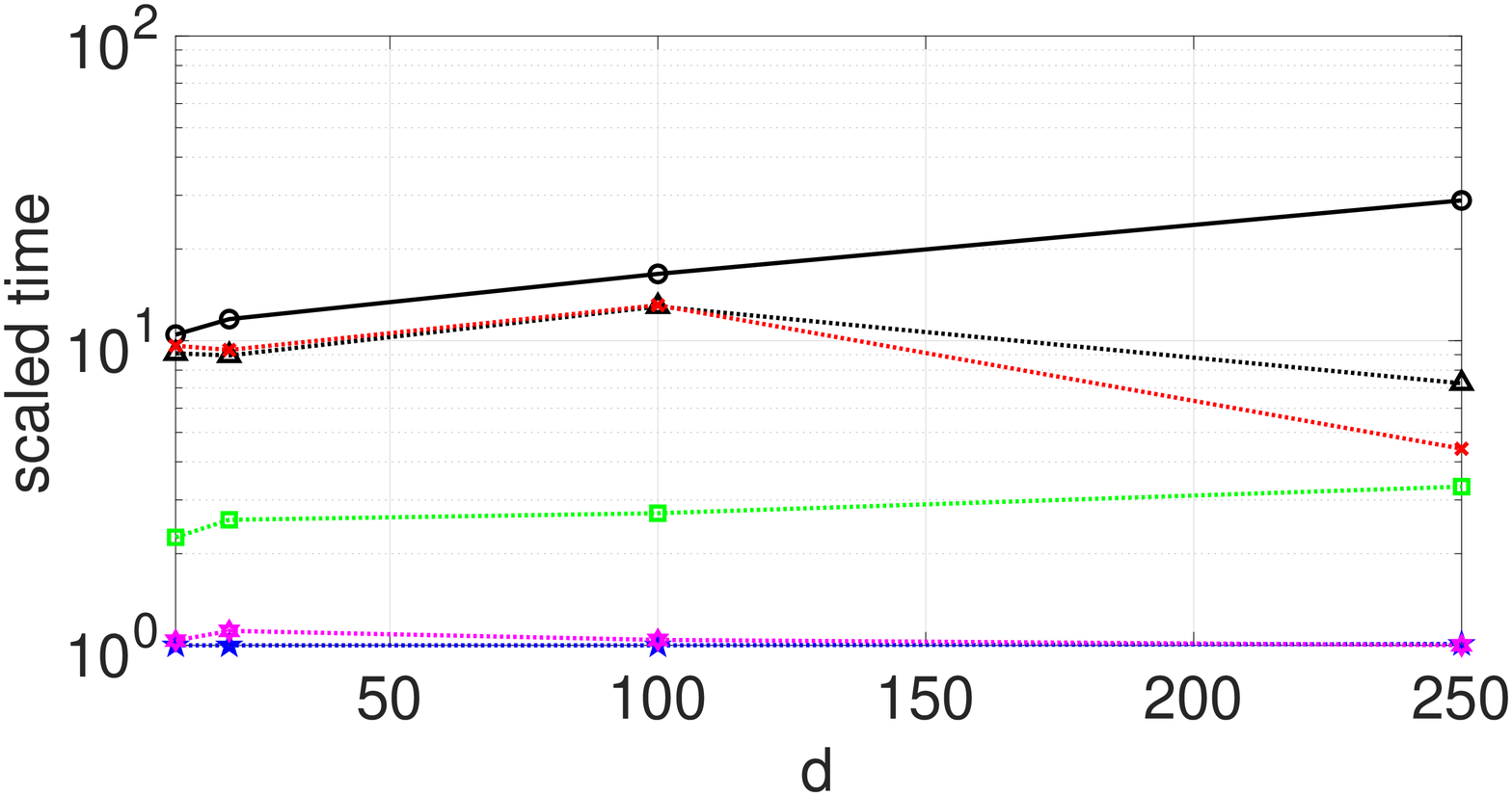}
\end{minipage}
}
\caption{Results for \cref{data:hypercube}. The scaled time means the time of each method over the minimum time of all methods.  For \emph{(a)}, $d \in \{20,100,250, 500\}$; for \emph{(b)}, $n \in \{100, 250, 500, 1000\}$;  for \emph{(c)}, $d \in \{20,50,250, 500\}$; for \emph{(d)}, $d \in \{10, 20, 100, 250\}$.  
} \label{fig:hypercube}
\end{figure}

The comparison results for  \cref{data:hypercube}   are plotted as  \cref{fig:hypercube}. Note that the values $\mathrm{PRW_p}$ returned by different methods are almost the same for this dataset. Therefore, we do not report the values in the figure. From the figure,  we can draw the following observations.  
i). \iRBBS~with  smaller $\theta$  always has less nGrad but  more nSk. This means that computing $\grad q(U^t)$ in a relatively high precision can help to reduce the whole iteration number of updating $U$.   In particular,  \iRBBS-0, which computes (almost) the exact  $\grad q(U^t)$,  takes the least nGrad.    Owing to that the complexity of one Sinkhorn iteration is much less than that of updating $U$ (see remarks at the end of \cref{subsection:iRBBSs}),   \iRBBS~with a moderate $\theta$ generally achieves the best overall performance.   The last rows of  \cref{fig:hypercube} show that \iRBBS-0.1 is almost the fastest among all \iRBBS~methods. 
ii).  Our \iRBBS-$\theta$ is better than \RABCDv~in terms that the former ones always have smaller nGrad and nSk and are always faster. Particularly,  for \cref{data:hypercube}, \iRBBS-0.1 is always more than 5x faster than \RABCD~and is about more than 10x  faster than \RBCD. For the largest instance $d = 250$, $n = 2500$, \iRBBS-0.1 (1.6s) is more than 7.2x faster than \RABCD~(11.1s) and about  28.6x faster than \RBCD~(44.4s).   It should be also mentioned that for all 800 problem instances, there are 5/21 instances in total for which \RBCD/\RABCD~meet the maximum iteration numbers and return solutions not satisfying the stopping criteria.

\begin{table}[!tbp]
\setlength{\tabcolsep}{1.8pt}
\centering
\scriptsize
 \caption{The average  results of 20 runs for  \cref{data:shakespeare}. In this table, ``a'', ``b'' stand for \RBCD~and \RABCD, respectively;   ``c'', ``d'', and ``e'' stand for \iRBBS-inf,  \iRBBS-0.1,  and \iRBBS-0, respectively.  For \RBCD, \RABCD, and \iRBBS-inf, \emph{nGrad} is always equal to \emph{nSk}. }\label{table:shakespeare:movie:iRBBSs}
\vspace{-2mm}
\begin{tabular}{@{}ccccccrrrrrrrrrr@{}}
\toprule
&\multicolumn{5}{c}{$\mathrm{PRW_p}$}  &  \multicolumn{5}{c}{nGrad/nSk} & \multicolumn{5}{c}{time} \\ 
     \cmidrule(l){2-6}   \cmidrule(l){7-11}   \cmidrule(l){12-16}  
data   & \multicolumn{1}{c}{a}& \multicolumn{1}{c}{b}&\multicolumn{1}{c}{c}&\multicolumn{1}{c}{d}&\multicolumn{1}{c}{e} & \multicolumn{1}{c}{a}  & \multicolumn{1}{c}{b}&\multicolumn{1}{c}{c}&\multicolumn{1}{c}{d}&\multicolumn{1}{c}{e} & \multicolumn{1}{c}{a}  & \multicolumn{1}{c}{b}&\multicolumn{1}{c}{c}&\multicolumn{1}{c}{d}&\multicolumn{1}{c}{e}\\ 
\midrule
H5/H& \textBF{0.0491}& 0.0491& 0.0491& 0.0491& 0.0491&   871 &  604 &  320& 64/1296& 61/3218& 9.6& 6.7& 3.4& \textBF{1.2}& 1.8 \\
H5/JC& 0.0596& 0.0596& \textBF{0.0596}& 0.0596& 0.0596&   904 &  497 &  193& 63/1571& 60/3047& 5.3& 2.9& 1.2& \textBF{0.6}& 0.7 \\
H5/MV& 0.0625& 0.0625& 0.0625& \textBF{0.0625}& 0.0625&   800 &  444 &  118& 63/1373& 61/2812& 5.4& 2.9& 0.8& \textBF{0.6}& 0.8 \\
H5/O& 0.0500& 0.0500& 0.0500& 0.0500& \textBF{0.0500}&   612 &  397 &  129& 43/1084& 45/2410& 4.9& 3.2& 1.0& \textBF{0.5}& 0.7 \\
H5/RJ& 0.1802& 0.1802& 0.1802& \textBF{0.1802}& 0.1802&   474 &  440 &  319& 73/1670& 69/3041& 3.8& 3.5& 2.4& \textBF{0.8}& 1.0 \\
H/JC& 0.0501& 0.0501& 0.0501& \textBF{0.0572}& 0.0572&   637 &  523 &  427& 41/1049& 42/2020& 4.4& 3.6& 2.9& \textBF{0.4}& 0.6 \\
H/MV& \textBF{0.0384}& 0.0384& 0.0384& 0.0384& 0.0384&  2794 & 1661 &  440& 117/2045& 125/5129& 22.9& 13.6& 3.4& \textBF{1.2}& 1.6 \\
H/O& 0.0140& \textBF{0.0140}& 0.0140& 0.0140& 0.0140&   796 &  503 &  212& 49/1471& 53/3093& 7.7& 4.7& 2.0& \textBF{0.8}& 1.2 \\
H/RJ& 0.1895& 0.1895& 0.1895& \textBF{0.1895}& 0.1895&   360 &  441 &  265& 46/933& 47/1763& 3.4& 4.2& 2.3& \textBF{0.6}& 0.8 \\
JC/MV& 0.0133& 0.0133& 0.0133& \textBF{0.0133}& 0.0133&  1816 &  975 &  216& 66/1781& 65/3742& 7.2& 3.8& 0.9& \textBF{0.4}& 0.6 \\
JC/O& 0.0098& 0.0098& \textBF{0.0098}& 0.0098& 0.0098&  1160 &  763 &  101& 52/1402& 45/2454& 6.5& 3.5& 0.5& \textBF{0.4}& 0.5 \\
JC/RJ& 0.1103& 0.1103& 0.1103& 0.1103& \textBF{0.1103}&   340 &  332 &  235& 44/1016& 53/2536& 1.5& 1.5& 1.1& \textBF{0.3}& 0.5 \\
MV/O& 0.0101& 0.0101& \textBF{0.0101}& 0.0101& 0.0101&  2222 & 1396 &  154& 90/2448& 88/5186& 13.2& 8.2& 0.9& \textBF{0.8}& 1.1 \\
MV/RJ& 0.1683& \textBF{0.1683}& 0.1683& 0.1683& 0.1683&   842 &  763 &  632& 85/1688& 95/4805& 5.0& 4.5& 3.8& \textBF{0.7}& 1.1 \\
O/RJ& 0.1256& 0.1256& \textBF{0.1256}& 0.1241& 0.1241&   376 &  372 &  210& 55/1070& 62/2749& 2.5& 2.4& 1.4& \textBF{0.5}& 0.8 \\
 \cmidrule(l){2-16} 
AVG& 0.0754& 0.0754& 0.0754& \textBF{0.0758}& 0.0758&  1000 &  674 &  265& 63/1460& 65/3200& 6.89& 4.62& 1.86& \textBF{0.66}& 0.90\\   
\bottomrule
\end{tabular}
\end{table}
\begin{table}[!htbp]
\setlength{\tabcolsep}{4pt}
\centering
\scriptsize
 \caption{The average  results of 20 runs for \cref{data:mnist}. In this table, ``a'', ``c'', ``d'',  and ``e'' stand for \RBCD, \iRBBS-inf,  \iRBBS-0.1,  and \iRBBS-0, respectively.  For \RBCD~and \iRBBS-inf, \emph{nGrad} is always equal to \emph{nSk}. }\label{table:mnist:iRBBSs}
\vspace{-2mm}
\begin{tabular}{@{}cccccrrrrrrrr@{}}
\toprule
&\multicolumn{4}{c}{$10^{-3}\times \mathrm{PRW_p}$}  &  \multicolumn{4}{c}{nGrad/nSk} & \multicolumn{4}{c}{time} \\ 
     \cmidrule(l){2-5}   \cmidrule(l){6-9}   \cmidrule(l){10-13}  
data   & \multicolumn{1}{c}{a}&\multicolumn{1}{c}{c}&\multicolumn{1}{c}{d}&\multicolumn{1}{c}{e} & \multicolumn{1}{r}{a}   &\multicolumn{1}{r}{c}&\multicolumn{1}{r}{d}&\multicolumn{1}{r}{e} & \multicolumn{1}{r}{a}  &\multicolumn{1}{r}{c}&\multicolumn{1}{r}{d}&\multicolumn{1}{r}{e}\\ 
\midrule
D0/D1& 0.9746& 0.9746& 0.9746& \textBF{0.9746}&   519 &  132& 59/506& 46/1062& 2.7& 0.7& \textBF{0.4}& 0.4 \\
D0/D2& 0.7942& \textBF{0.7942}& 0.7942& 0.7942&   988 &  187& 72/943& 74/2655& 4.8& 1.0& \textBF{0.5}& 0.6 \\
D0/D3& 1.2021& 1.2021& 1.2021& \textBF{1.2021}&   554 &  177& 64/934& 44/1386& 2.6& 0.8& 0.4& \textBF{0.4} \\
D0/D4& 1.2205& 1.2205& \textBF{1.2308}& 1.2308&   912 &  243& 98/998& 81/2947& 4.1& 1.1& \textBF{0.5}& 0.6 \\
D0/D5& 1.0266& \textBF{1.0276}& 1.0266& 1.0266&  1178 &  252& 89/1402& 96/4355& 4.8& 1.1& \textBF{0.5}& 0.8 \\
D0/D6& 0.8027& 0.8027& 0.8027& \textBF{0.8027}&   919 &  186& 66/1052& 54/2387& 4.0& 0.8& \textBF{0.4}& 0.5 \\
D0/D7& 0.8558& 0.8558& \textBF{0.8558}& 0.8558&   601 &  163& 63/747& 56/1727& 2.7& 0.8& \textBF{0.4}& 0.4 \\
D0/D8& 1.0529& \textBF{1.0529}& 1.0529& 1.0529&   709 &  185& 84/1330& 72/3302& 3.1& 0.8& \textBF{0.5}& 0.6 \\
D0/D9& 1.0846& \textBF{1.0846}& 1.0846& 1.0846&   698 &  179& 68/1108& 55/2436& 3.3& 0.8& \textBF{0.4}& 0.5 \\
D1/D2& 0.6647& \textBF{0.6647}& 0.6647& 0.6647&   757 &  114& 55/638& 47/1511& 4.1& 0.6& \textBF{0.4}& 0.5 \\
D1/D3& 0.8610& \textBF{0.8610}& 0.8610& 0.8610&  1649 &  230& 105/982& 71/2065& 8.8& 1.3& \textBF{0.7}& 0.8 \\
D1/D4& 0.6666& 0.6666& 0.6666& \textBF{0.6666}&   982 &  153& 65/505& 54/1282& 5.1& 0.8& \textBF{0.4}& 0.5 \\
D1/D5& \textBF{0.8400}& 0.8400& 0.8400& 0.8400&  3436 &  540& 184/1503& 157/3844& 15.9& 2.4& \textBF{1.0}& 1.1 \\
D1/D6& 0.7957& 0.7957& \textBF{0.7957}& 0.7954&  2919 &  331& 128/1095& 119/3354& 15.0& 1.7& \textBF{0.8}& 1.1 \\
D1/D7& \textBF{0.5723}& 0.5723& 0.5723& 0.5723&   961 &  157& 65/525& 56/1253& 5.2& 0.9& \textBF{0.4}& 0.5 \\
D1/D8& 0.8784& \textBF{0.8784}& 0.8784& 0.8783&  5000 &  402& 200/2001& 181/5505& 25.8& 2.1& \textBF{1.3}& 1.6 \\
D1/D9& 0.8534& 0.8534& \textBF{0.8534}& 0.8534&   951 &  168& 69/581& 58/1321& 5.6& 1.0& \textBF{0.6}& 0.7 \\
D2/D3& \textBF{0.7188}& 0.7188& 0.7188& 0.7188&  2070 &  336& 120/1733& 100/4897& 10.0& 1.6& \textBF{0.8}& 1.0 \\
D2/D4& 1.0684& 1.0854& \textBF{1.0854}& 1.0854&  5000 & 4602& 550/8093& 382/16291& 24.3& 21.4& \textBF{3.2}& 3.4 \\
D2/D5& 1.0767& 1.0767& \textBF{1.0767}& 1.0767&  1320 &  272& 109/1910& 89/3990& 5.9& 1.2& \textBF{0.6}& 0.7 \\
D2/D6& \textBF{0.8993}& 0.8993& 0.8993& 0.8993&  1202 &  245& 102/967& 87/3205& 5.8& 1.2& \textBF{0.6}& 0.7 \\
D2/D7& 0.6950& \textBF{0.6950}& 0.6950& 0.6950&   951 &  174& 67/1156& 63/3108& 4.7& 0.9& \textBF{0.5}& 0.8 \\
D2/D8& 0.6711& 0.6711& 0.6711& \textBF{0.6711}&  1293 &  213& 72/925& 69/3547& 6.2& 1.0& \textBF{0.4}& 0.7 \\
D2/D9& 1.0567& \textBF{1.0567}& 1.0567& 1.0567&  1985 &  284& 120/1907& 93/3753& 9.4& 1.3& \textBF{0.7}& 0.8 \\
D3/D4& 1.1994& 1.2003& \textBF{1.2003}& 1.2003&  5000 &  312& 76/762& 69/2523& 22.2& 1.4& \textBF{0.4}& 0.5 \\
D3/D5& 0.5817& \textBF{0.5817}& 0.5817& 0.5817&  1187 &  181& 71/793& 58/2430& 4.8& 0.8& \textBF{0.4}& 0.5 \\
D3/D6& 1.2317& \textBF{1.2317}& 1.2317& 1.2317&  1028 &  212& 82/772& 66/2121& 4.5& 0.9& \textBF{0.4}& 0.5 \\
D3/D7& \textBF{0.7245}& 0.7245& 0.7245& 0.7245&  1101 &  202& 88/831& 776/19184& 5.2& 0.9& \textBF{0.5}& 5.3 \\
D3/D8& 0.8798& 0.8798& \textBF{0.8798}& 0.8798&   916 &  200& 84/1113& 77/3675& 4.0& 0.9& \textBF{0.5}& 0.7 \\
D3/D9& \textBF{0.8302}& 0.8302& 0.8302& 0.8302&  1572 &  210& 72/955& 49/2193& 7.2& 1.0& \textBF{0.4}& 0.4 \\
D4/D5& 1.0063& 1.0063& \textBF{1.0063}& 1.0063&   936 &  245& 87/863& 71/2748& 3.9& 1.0& \textBF{0.4}& 0.5 \\
D4/D6& \textBF{0.8436}& 0.8436& 0.8436& 0.8436&  1312 &  245& 96/1102& 82/3463& 5.9& 1.1& \textBF{0.5}& 0.7 \\
D4/D7& 0.7899& \textBF{0.7899}& 0.7899& 0.7899&  1351 &  269& 96/912& 76/2509& 6.4& 1.2& \textBF{0.5}& 0.6 \\
D4/D8& 1.0950& 1.0982& 1.0982& \textBF{1.0982}&  5000 &  213& 52/550& 52/1930& 21.9& 0.9& \textBF{0.3}& 0.4 \\
D4/D9& 0.4892& 0.4892& \textBF{0.4892}& 0.4892&  1997 &  281& 70/1116& 54/2610& 9.1& 1.3& \textBF{0.4}& 0.5 \\
D5/D6& \textBF{0.7170}& 0.7170& 0.7170& 0.7170&  1335 &  220& 86/891& 69/2536& 5.4& 0.9& \textBF{0.4}& 0.5 \\
D5/D7& \textBF{0.9126}& 0.9126& 0.9126& 0.9126&   899 &  242& 72/652& 71/2132& 3.7& 1.0& \textBF{0.4}& 0.5 \\
D5/D8& 0.7152& 0.7152& \textBF{0.7152}& 0.7152&  1429 &  225& 71/1060& 50/2347& 5.9& 0.9& \textBF{0.4}& 0.4 \\
D5/D9& 0.7755& 0.7755& 0.7755& \textBF{0.7755}&  1413 &  252& 86/1216& 64/3451& 5.9& 1.0& \textBF{0.5}& 0.6 \\
D6/D7& 1.1098& \textBF{1.1098}& 1.1098& 1.1098&   826 &  195& 66/398& 62/1318& 3.8& 0.9& \textBF{0.3}& 0.4 \\
D6/D8& 0.9175& 0.9175& \textBF{0.9175}& 0.9175&   725 &  186& 57/721& 57/2305& 3.2& 0.8& \textBF{0.3}& 0.5 \\
D6/D9& 1.1055& \textBF{1.1055}& 1.1055& 1.1055&  1117 &  219& 95/1038& 79/3046& 5.0& 1.0& \textBF{0.5}& 0.6 \\
D7/D8& 1.0787& \textBF{1.0787}& 1.0787& 1.0787&   757 &  180& 68/796& 59/2455& 3.5& 0.8& \textBF{0.4}& 0.5 \\
D7/D9& \textBF{0.6079}& 0.6079& 0.6079& 0.6079&  1985 &  266& 79/1026& 72/3208& 9.3& 1.3& \textBF{0.5}& 0.7 \\
D8/D9& 0.8678& 0.8678& \textBF{0.8678}& 0.8678&   745 &  180& 57/713& 50/2107& 3.2& 0.8& \textBF{0.3}& 0.4 \\
 \cmidrule(l){2-13} 
AVG& 0.8847& 0.8852& \textBF{0.8854}& 0.8854&  1560 &  326& 95/1152& 95/3366& 7.28& 1.52& \textBF{0.57}& 0.78\\ 
\bottomrule
\end{tabular}
\end{table}

The comparison results for  \cref{data:shakespeare,data:mnist} are reported in \cref{table:shakespeare:movie:iRBBSs,table:mnist:iRBBSs}, respectively.
In the last ``AVG'' line,  we summarize the averaged   ``$\mathrm{PRW_p}$'', ``nGrad/nSk'' and  ``time''. 
  From the tables, we have the following observations.  
i). As for the solution quality,  \iRBBS-0 and \iRBBS-0.1 perform the best among all methods.
 ii). Among \iRBBS~methods, \iRBBS-0.1 and \iRBBS-0 take the least nGrad and \iRBBS-inf takes the most nGrad while \iRBBS-0 takes the most nSk and \iRBBS-inf takes the least nSk.
  Compared with \iRBBS-inf, \iRBBS-0.1 is about 2.8x faster for  \cref{data:shakespeare} and is about 2.7x faster for \cref{data:mnist}. Compared with \iRBBS-0, \iRBBS-0.1 is \rev{about} 1.4x faster for \cref{data:shakespeare,data:mnist}. iii). Compared with \RABCDv, \iRBBS-0.1 can always take significantly less nGrad and may take a bit more nSk. This makes it about 7x faster than \RABCD~and \rev{about} 10.4x faster than \RBCD~for  \cref{data:shakespeare}, and about 12.8x  faster than \RBCD~for \cref{data:mnist}. Besides, for instances D1/D8, D2/D4, D3/D4, D4/D8, \RBCD~meets the maximum iteration number.

We now further investigate the benefit of performing several Sinkhorn iterations over only performing one Sinkhorn iteration in each update of the variable $U$.  The results of \iRBBS-inf and \iRBBS-0.1 for different values of $\eta$, as well as the results of \RABCD~with $\eta = 0.1$ for \cref{data:shakespeare}  and \rev{\RBCD} with $\eta = 8$ for \cref{data:mnist},  are plotted as  \cref{fig:dynamic:eta:iRBBSs}.  From this figure, we can make the following observations. 
i) Our method \iRBBS~with different $\theta$ for solving \eqref{prob:sub:RALMexp} with a smaller $\eta$ is still much better  than \RABCDv~for solving \eqref{prob:sub:RALMexp} with a larger $\eta$ in terms of both speed and solution quality. Specifically, for $\eta = 0.05$, our \iRBBS-0.1 is still about  4.6x faster for \cref{data:shakespeare} and the returned $\mathrm{PRW}_p$ (0.0918) is 1.2x better than that of  \RABCD~(0.0754).  For $\eta = 3$, our \iRBBS-0.1 is still about 11.5x faster than \RBCD~for \cref{data:mnist},  and the returned $\mathrm{PRW}_p$ (886.1) is  better than that of  \RBCD~(884.7). 
ii) Generally, problem  \eqref{prob:sub:RALMexp} with a smaller $\eta$ can return better solutions but is  harder to solve.  The strategy of doing several Sinkhorn iterations  while updating $U$ once can  indeed  speed up the algorithm  
for different $\eta$ and the advantage of  \iRBBS-0.1 over  \iRBBS-inf  is larger for smaller $\eta$.  
\begin{figure}[!tbp]
\centering
\subfloat{ 
\includegraphics[width=0.242\linewidth,height=0.121\linewidth]{./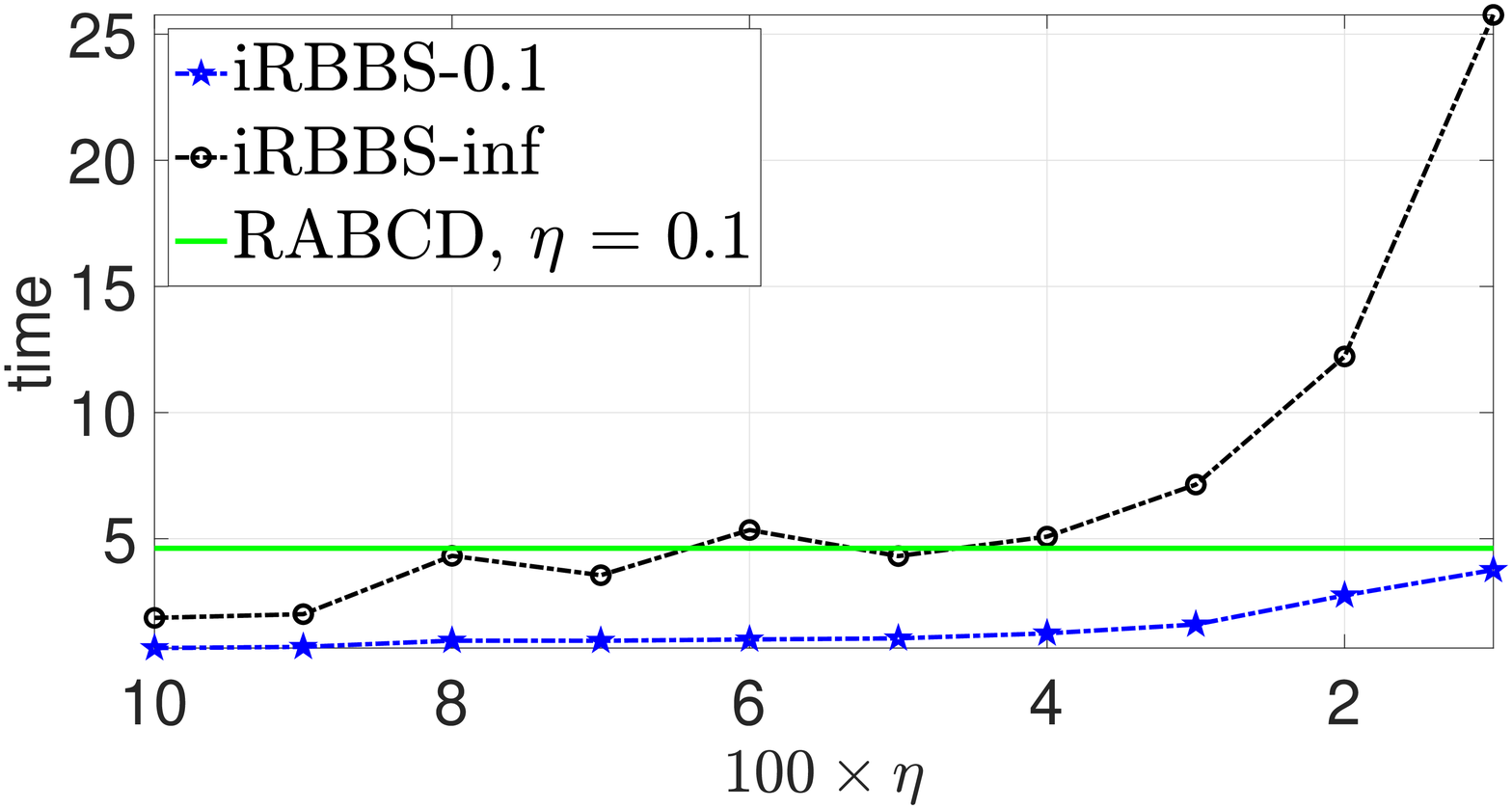}
\hspace{-2mm}
\includegraphics[width=0.242\linewidth,height=0.121\linewidth]{./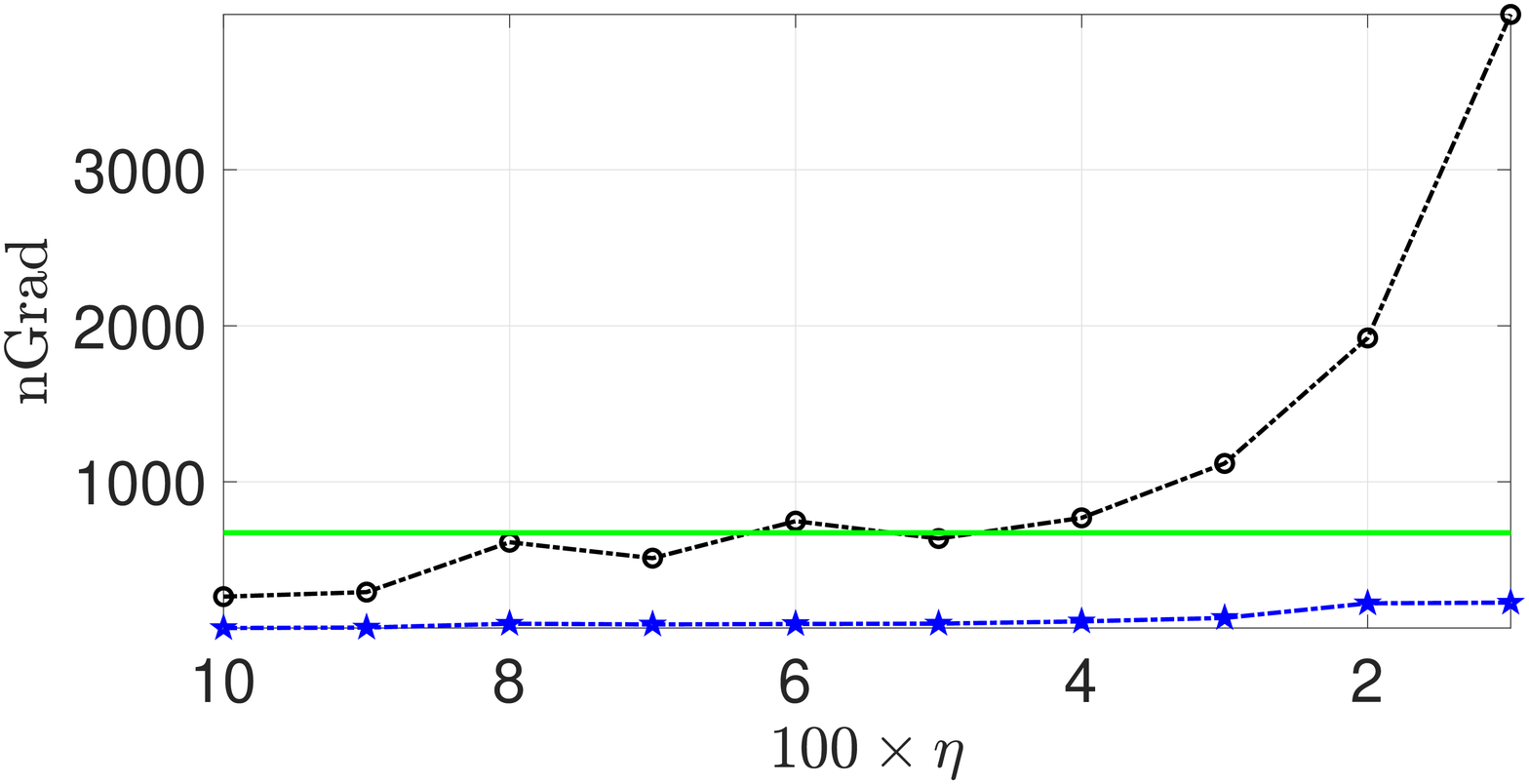}
\hspace{-2mm}
\includegraphics[width=0.242\linewidth,height=0.121\linewidth]{./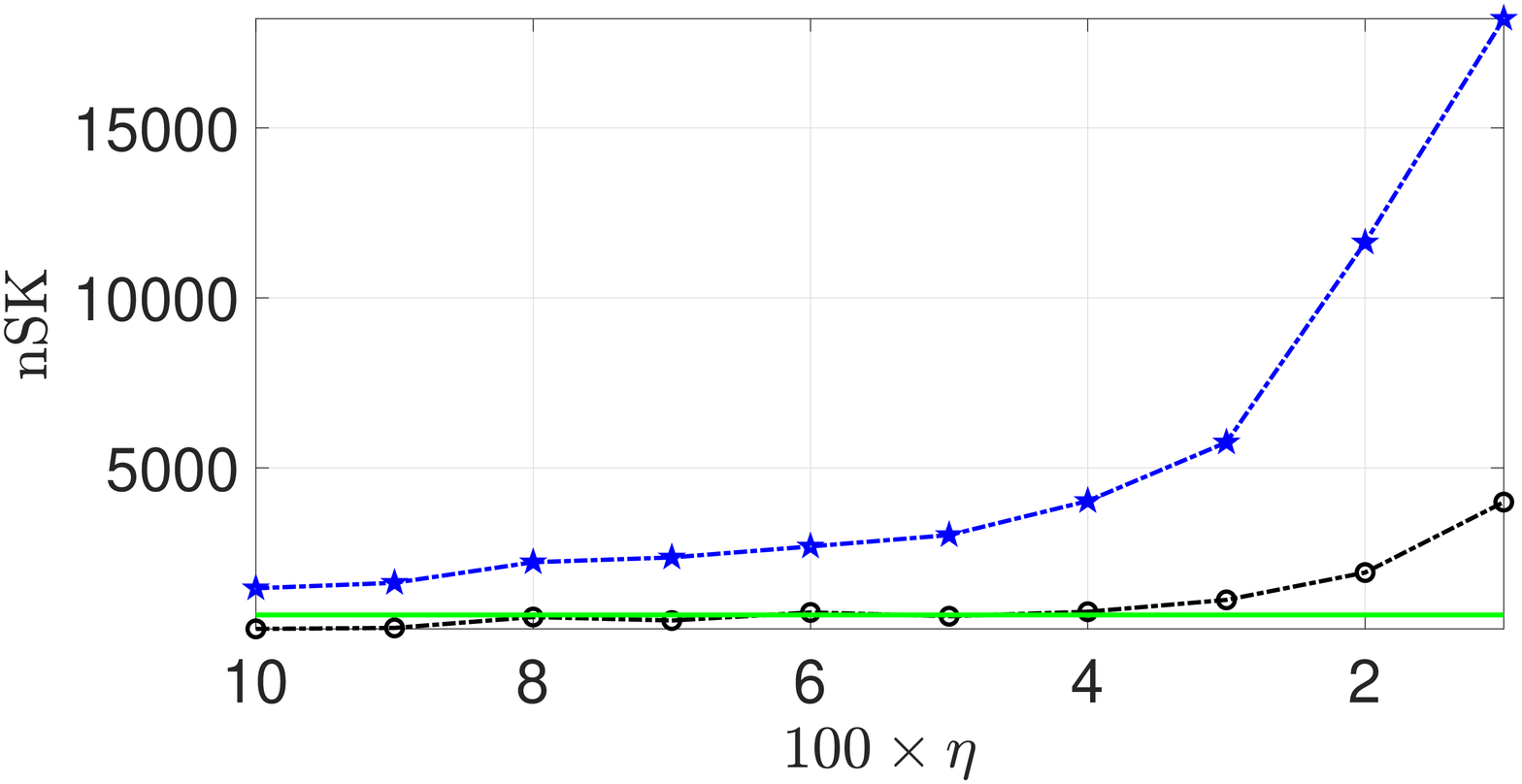}
\hspace{-2mm}
\includegraphics[width=0.242\linewidth,height=0.121\linewidth]{./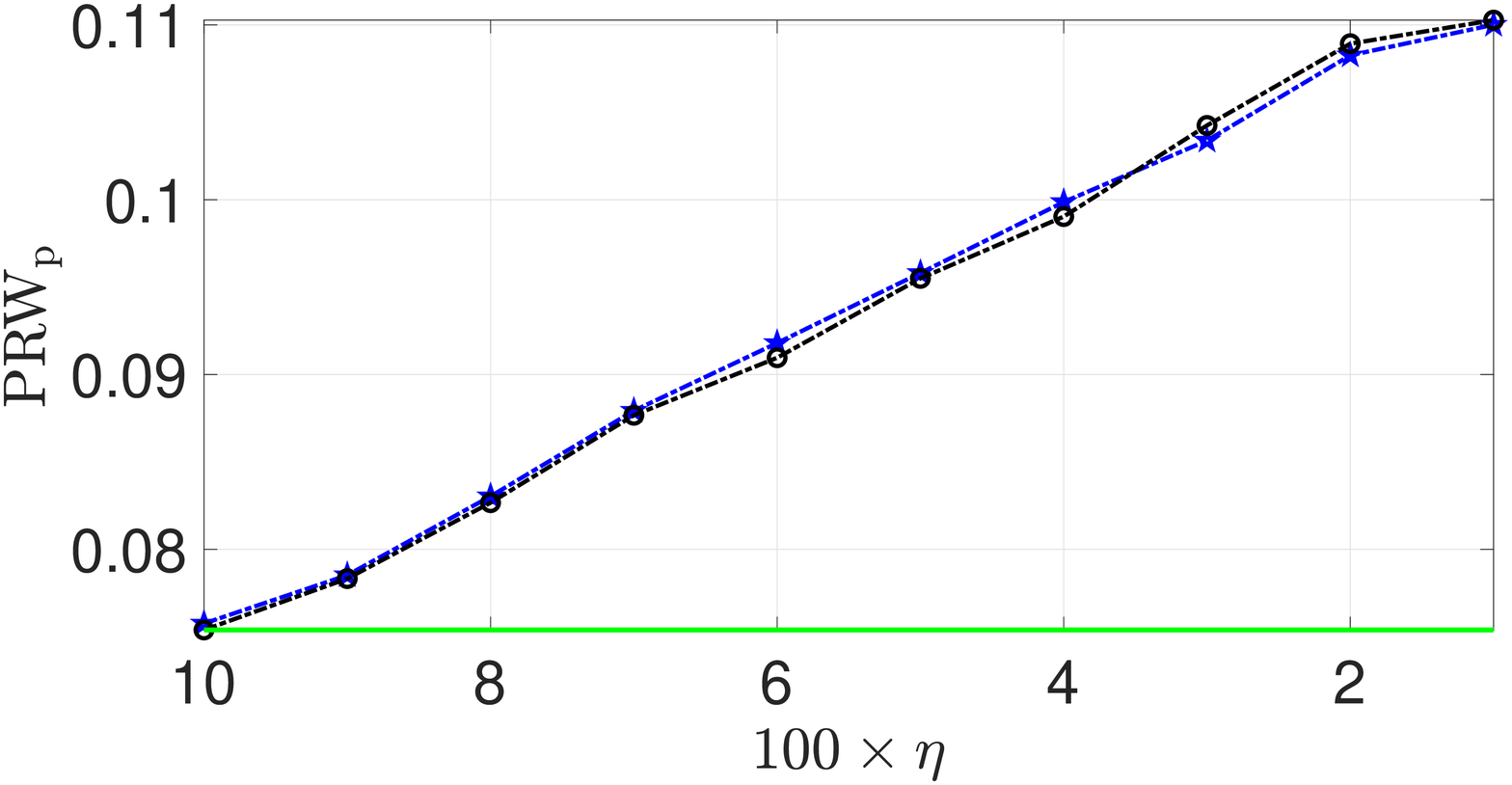}
}
\vspace{-3.5mm}
\hfill
\subfloat{ 
\includegraphics[width=0.242\linewidth,height=0.121\linewidth]{./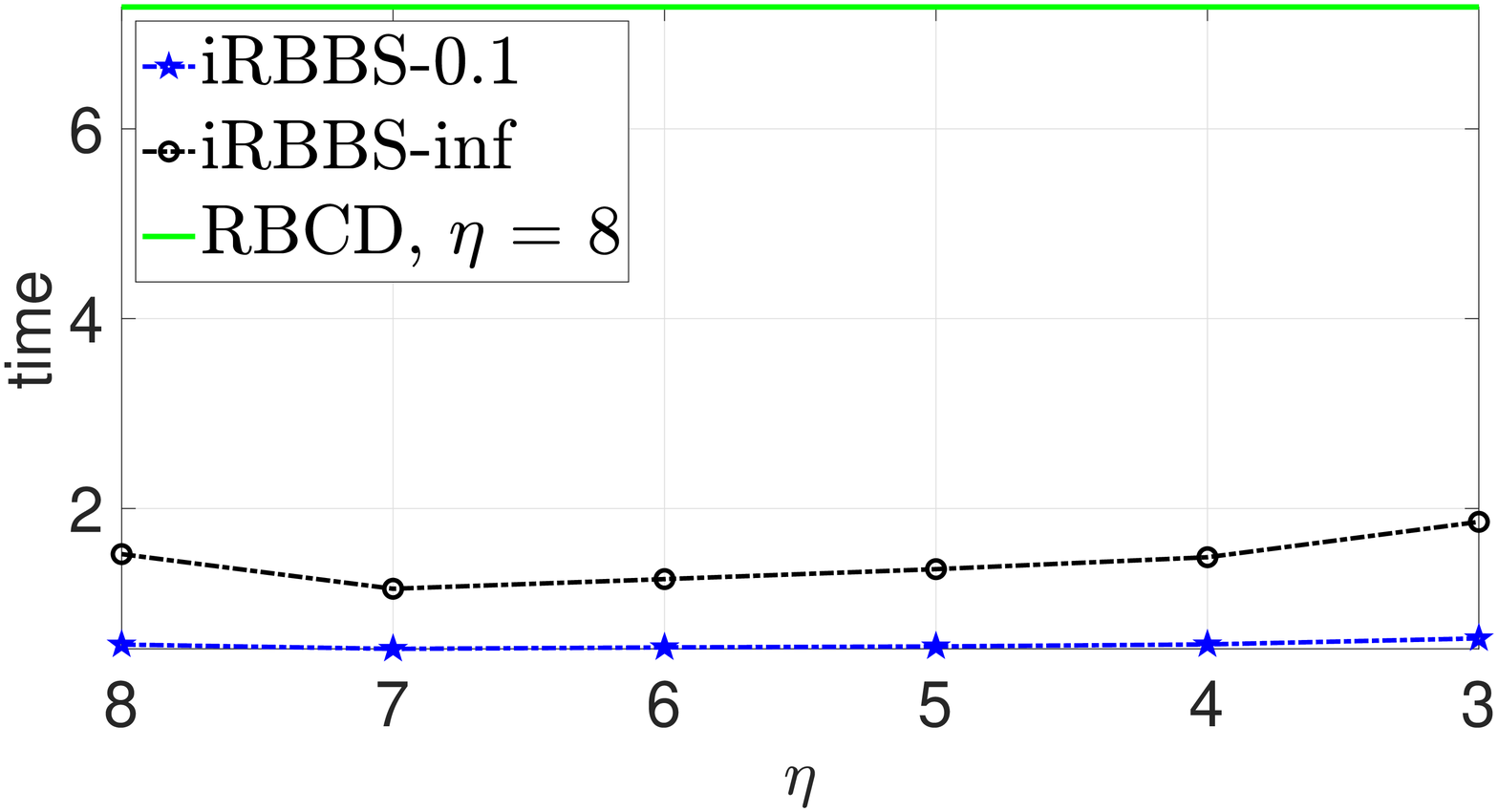}
\hspace{-2mm}
\includegraphics[width=0.242\linewidth,height=0.121\linewidth]{./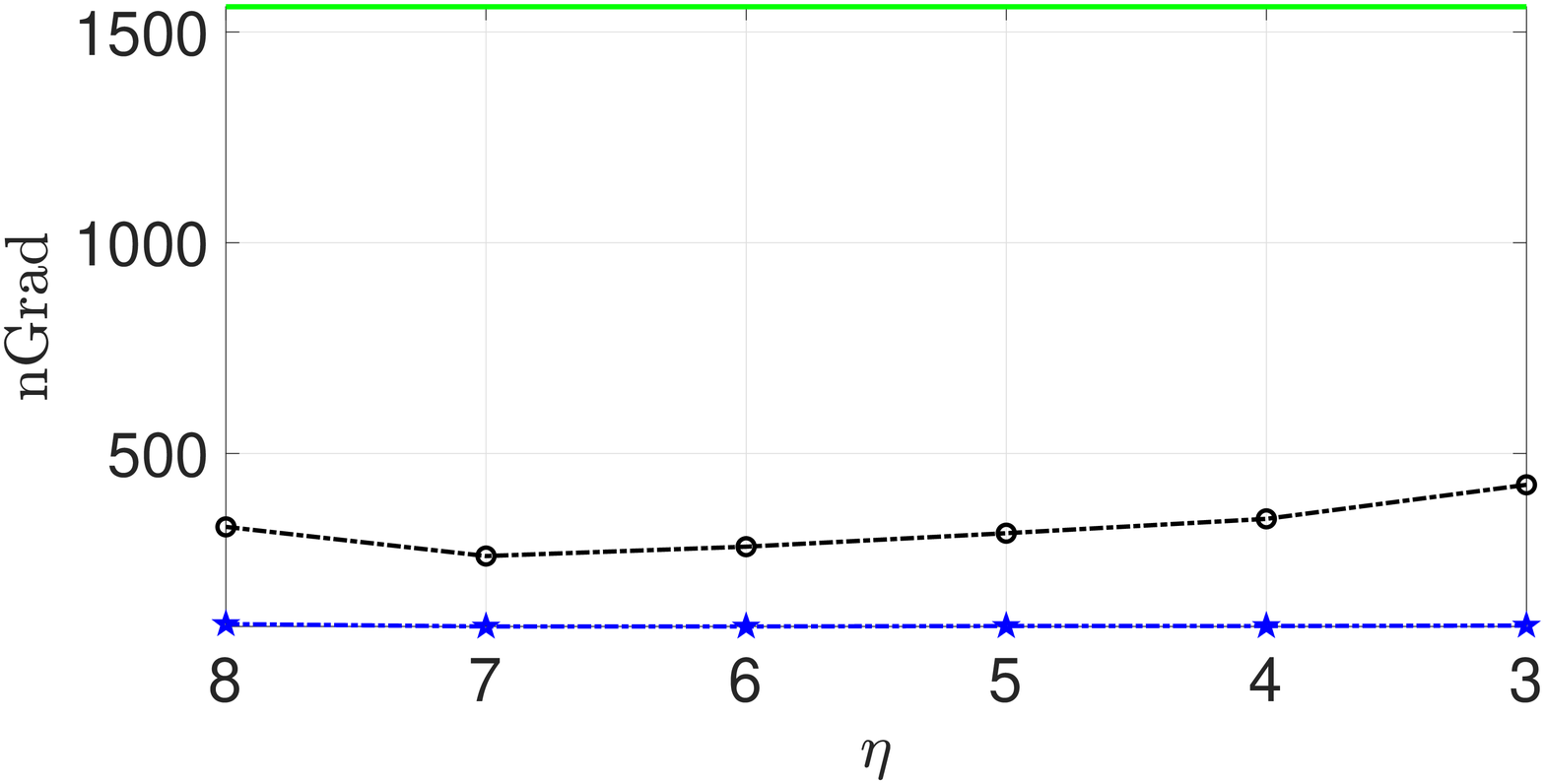}
\hspace{-2mm}
\includegraphics[width=0.242\linewidth,height=0.121\linewidth]{./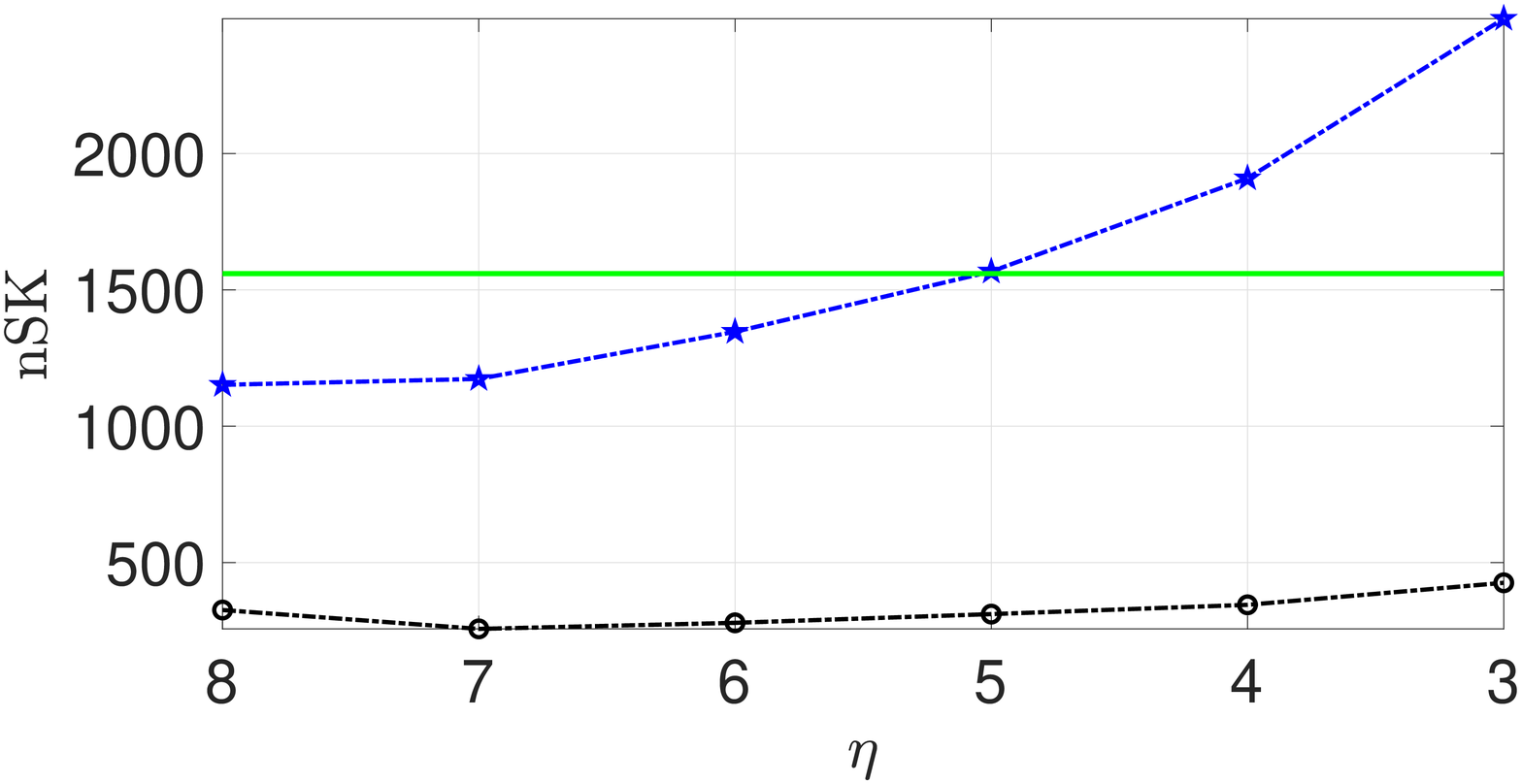}
\hspace{-2mm}
\includegraphics[width=0.242\linewidth,height=0.121\linewidth]{./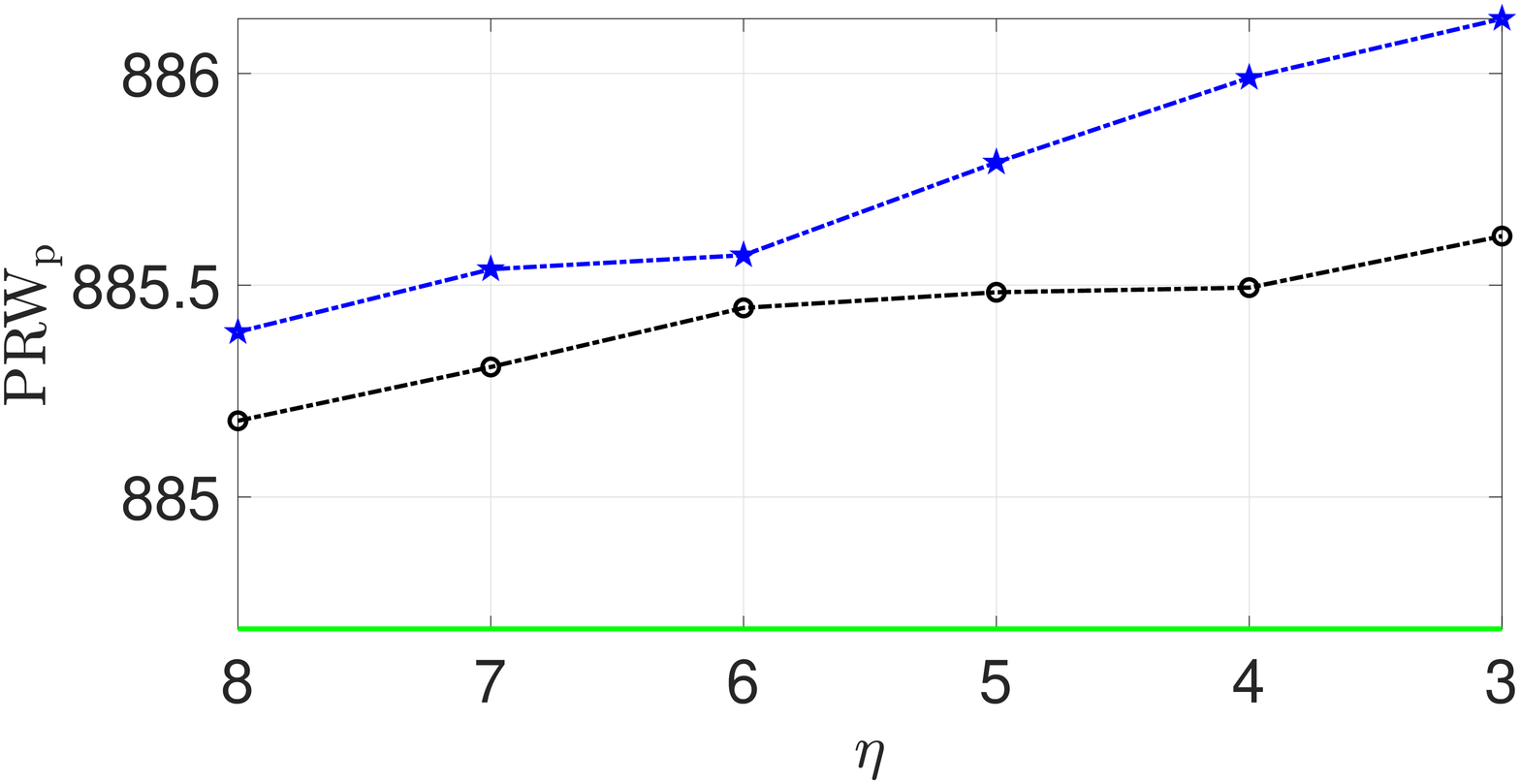}
}
\caption{Comparison of \iRBBS-inf and \iRBBS-0.1 for different $\eta$. The top four figures are for \cref{data:shakespeare} and the bottom four figures are for \cref{data:mnist}.}\label{fig:dynamic:eta:iRBBSs}
\end{figure}

From the above results,  we can conclude that \iRBBS~generally perform much better than \RABCDv~for the three datasets.  More importantly, our methods adopt the adaptive stepsize without needing to tune the best stepsize as done in \RABCDv.

\subsection{Comparison on computing the PRW distance \eqref{equ:PRW}}\label{subsec:num:PRW}
In this subsection, we present numerical results to illustrate the effectiveness and efficiency of our proposed \ReALM~algorithm, namely, \cref{alg:ReALM}.  The subproblem \eqref{prob:sub:RALMexp:0:1}  is solved by our developed \iRBBS~algorithm, namely,  \cref{alg:iRBBSs}, with $\theta_t$ chosen via \eqref{equ:theta:choice:numerical}.

{\it Parameters of \ReALM}.  In our numerical tests, we choose  $\epsilon_1  = 2 \|C\|_{\infty} \epsilon_2$,  $\epsilon_2 = 10^{-6} \max\{\|r\|_{\infty}, \|c\|_{\infty}\}$, and $\epsilon_c = 10^{-5}$. To prevent too small $\eta$, in our implementation, we update $\eta_{k+1} = \min\left\{\gamma_{\eta} \eta_{k}, \eta_{\min}\right\}$ other than using \eqref{equ:eta_k:update}, where $\eta_{\min}$ is a preset positive number. Once $\eta_k$ becomes $\eta_{\min}$, we set the corresponding $\epsilon_{k,1} = \epsilon_1$ and $\epsilon_{k,2} = \epsilon_2$ and stop the algorithm if 
$\err_1(\x^k,\pi^k)\leq \epsilon_1$ and $\err_2(\x^k,\pi^k)\leq \epsilon_2$.  To avoid possible numerical instability, we restrict the maximum number of updating $\pi^k$ as 8. We set   $\gamma_{\epsilon} = 0.25$ and choose   $\epsilon_{1,2} = 10^{-1} \max\{\|r\|_{\infty}, \|c\|_{\infty}\}$ and $\epsilon_{1,1}  = 2 \|C\|_{\infty} \epsilon_{1,2}$. The initial point is chosen according to the way in \cref{subsec:num:subprob}.   Moreover, we denote by \ReALM-$(\eta_{\min},\gamma_W)$  the \ReALM~algorithm with particular parameters $\eta_{\min}$ and $\gamma_W$. Note that choosing $\gamma_W = 0$ means that we adopt a continuation technique to solve the problem \eqref{prob:sub:RALMexp:0:1} with  $\pi^k \equiv \pi^1$ and $\eta_k = \eta_{\min}$, which is generally better than directly solving a single problem \eqref{prob:sub:RALMexp:0:1} with a small $\eta_k = \eta_{\min}$. 

{\it Parameters of \iRBBS~in  \ReALM}. 
If  $\max\{\|r\|_{\infty}, \|c\|_{\infty}\}/\eta_k \geq 500$ or $\|C(U^t) - \eta_k \log  \pi^k \|_{\var}/\eta_k$$ \geq 900$, we perform the Sinkhorn iteration \eqref{equ:SK}  and set $\theta = 10$ in \eqref{equ:theta:choice:numerical}; otherwise, we perform the Sinkhorn iteration  \eqref{equ:SK:2} and set $\theta =0.1$ in \eqref{equ:theta:choice:numerical}.

\begin{table}[!htbp]
\setlength{\tabcolsep}{3.5pt}
\centering
\scriptsize
 \caption{The average  results  for \cref{data:hypercube}.  In this table, ``a'' and ``b'' stand for \ReALM-$(0.02,0)$ and \ReALM-$(0.055,0.9)$, respectively.}\label{table:hypercube:ALM}
\vspace{-2mm}
\begin{tabular}{@{}cccrrrrrrrr@{}}
\toprule
&\multicolumn{2}{c}{$\mathrm{PRW_p}$}  &  \multicolumn{2}{c}{nGrad} &  \multicolumn{2}{c}{$\mbox{nSk}_{\exp}$/$\mbox{nSk}_{\log}$} & \multicolumn{2}{c}{time} & \multicolumn{2}{c}{iter} \\ 
 \cmidrule(l){2-3}   \cmidrule(l){4-5}    \cmidrule(l){6-7}   \cmidrule(l){8-9}  \cmidrule(l){10-11}  
$n$/$d$   & \multicolumn{1}{c}{a}&\multicolumn{1}{c}{b}&
  \multicolumn{1}{c}{a}&\multicolumn{1}{c}{b}& 
   \multicolumn{1}{c}{a}&\multicolumn{1}{c}{b}&
    \multicolumn{1}{c}{a}&\multicolumn{1}{c}{b}& 
     \multicolumn{1}{c}{a}&\multicolumn{1}{c}{b} \\
\midrule 
100/20 &8.3430 & 8.3430 &182 & 138 &3208/8342 & 13258/0 &1.1 & 0.1 &0.0/7.0 & 8.0/14.0  \\
100/100 &9.1732 & 9.1734 &287 & 241 &3837/3354 & 11369/0 &0.5 & 0.1 &0.0/7.0 & 8.0/14.0  \\
100/250 &10.8956 & 10.8969 &467 & 348 &3200/8858 & 10550/279 &1.3 & 0.2 &0.0/7.0 & 8.0/14.1  \\
100/500 &13.3111 & 13.3152 &680 & 469 &2192/3782 & 9844/213 &0.8 & 0.2 &0.0/7.0 & 8.0/14.0  \\
\midrule
100/50 &8.5999 & 8.6000 &321 & 189 &3945/13096 & 11700/0 &1.8 & 0.1 &0.0/7.0 & 8.0/14.0  \\
250/50 &8.2334 & 8.2334 &150 & 156 &2480/4168 & 10828/0 &2.2 & 0.3 &0.0/7.0 & 7.9/13.9  \\
500/50 &8.1299 & 8.1299 &122 & 141 &1944/4508 & 11578/0 &7.2 & 0.8 &0.0/7.0 & 8.0/14.0  \\
1000/50 &8.0710 & 8.0709 &111 & 130 &1716/5307 & 12344/0 &30.8 & 2.3 &0.0/7.0 & 8.0/14.0  \\
\midrule
20/20 &9.3098 & 9.3104 &591 & 221 &4016/23861 & 11906/0 &0.5 & 0.0 &0.0/7.1 & 8.0/14.0  \\
50/50 &9.3627 & 9.3629 &386 & 248 &4656/9416 & 11819/0 &0.5 & 0.1 &0.0/7.0 & 7.9/13.9  \\
250/250 &9.1744 & 9.1748 &282 & 257 &2942/4136 & 11371/248 &2.2 & 0.5 &0.0/7.0 & 8.0/14.0  \\
500/500 &9.1150 & 9.1154 &258 & 262 &2391/4884 & 9927/873 &7.5 & 2.2 &0.0/7.0 & 7.6/13.6  \\
\midrule
100/10 &8.1620 & 8.1619 &405 & 173 &4635/33570 & 65772/0 &4.6 & 0.3 &0.0/7.0 & 8.0/14.0  \\
200/20 &8.1331 & 8.1331 &180 & 128 &2861/10259 & 15007/0 &3.8 & 0.3 &0.0/7.0 & 8.0/14.0  \\
1000/100 &8.1187 & 8.1187 &124 & 145 &2096/5157 & 12153/141 &30.5 & 3.1 &0.0/7.0 & 8.0/14.0  \\
2500/250 &8.1164 & 8.1164 &117 & 146 &2416/5499 & 11482/624 &373.8 & 103.3 &0.0/7.0 & 7.9/13.9  \\
\midrule
\cmidrule(l){2-11}  
AVG &9.0156 & \textBF{9.0160} &291 & 212 &3033/9262 & 15057/149 &29.3 & 7.1 &0.0/7.0 & 8.0/14.0  \\
\bottomrule
\end{tabular}
\end{table}
The results for \cref{data:hypercube} are presented in \cref{table:hypercube:ALM}. In this table and the subsequent tables,  
the terms ``$\mbox{nSk}_{\log}$''  and  ``$\mbox{nSk}_{\exp}$''   mean the  total  numbers of Sinkhorn iterations \eqref{equ:SK} and  \eqref{equ:SK:2}, respectively, the pair ``$K_1$/$K$'' in the column ``iter''  means that the corresponding algorithm stops at the $K$-iteration and updates the multiplier matrix  $K_1$ times.  For each $(n,d)$ pair,  we randomly generate  10 instances, each equipped with 5 randomly generated initial points.  We conside \ReALM-$(0.055,0.9)$ and \ReALM-$(0.02,0)$ both with $\eta_1 = 1$ and $\gamma_{\eta} = 0.5$.  Note that the latter admits a smaller $\eta_{\min}$ and does not update the multiplier matrix. From \cref{table:hypercube:ALM}, we can observe that \ReALM-$(0.055,0.9)$ can not only return better solutions than \ReALM-$(0.02,0.9)$ but also is about 4x faster.  This shows that updating the multiplier matrix does help. In fact, on average \ReALM-$(0.055,0.9)$ updates the multiplier matrix 8 times in 14 total iterations.   The reasons why \ReALM~with updating the multiplier matrix outperforms \ReALM~without updating the multiplier matrix in terms of solution quality and speed are as follows. First, updating the multiplier matrix in \ReALM~can keep the solution quality even using a larger $\eta$.  Second,  solving the subproblem with a larger $\eta$ is always easier,  which enables that \ReALM-$(0.055,0.9)$ computes less   \rev{$\grad\Lcal(\x)$} and performs less Sinkhorn  iterations \eqref{equ:SK}  which involves computing the log-sum-exp function $\log \sum_{i} \exp(x_i/\eta) = x_{\max}/\eta + \log \sum_i \exp (x_i/\eta - x_{\max}/\eta)$ for small $\eta$.

\begin{table}[!t]
\setlength{\tabcolsep}{3.5pt}
\centering
\scriptsize
 \caption{The average  results  for \cref{data:shakespeare,data:mnist}. For \cref{data:shakespeare},  $\Gamma = 1$, ``a'' and ``b'' stand for \ReALM-$(0.0035,0)$ and \ReALM-$(0.07,0.9)$, respectively. For \cref{data:mnist}, $\Gamma = 10^{-3}$, ``a'' and ``b'' stand for \ReALM-$(1,0)$ and \ReALM-$(3,0.9)$, respectively.   }\label{table:shakespeare:mnist:ALM:short}
\vspace{-2mm}
\begin{tabular}{@{}cccrrrrrrrr@{}}
\toprule
&\multicolumn{2}{c}{$\Gamma \times \mathrm{PRW_p}$}  &  \multicolumn{2}{c}{nGrad} &  \multicolumn{2}{c}{$\mbox{nSk}_{\exp}$/$\mbox{nSk}_{\log}$} & \multicolumn{2}{c}{time} & \multicolumn{2}{c}{iter} \\ 
 \cmidrule(l){2-3}   \cmidrule(l){4-5}    \cmidrule(l){6-7}   \cmidrule(l){8-9}  \cmidrule(l){10-11}  
data   & \multicolumn{1}{c}{a}&\multicolumn{1}{c}{b}&
  \multicolumn{1}{c}{a}&\multicolumn{1}{c}{b}& 
   \multicolumn{1}{c}{a}&\multicolumn{1}{c}{b}&
    \multicolumn{1}{c}{a}&\multicolumn{1}{c}{b}& 
    \multicolumn{1}{c}{a}&\multicolumn{1}{c}{b}
     \\
     \midrule 
 H5/JC &0.0927 & \textBF{0.1099} &1081 & 675 &23809/8267 & 9786/0 &84.1 & 8.6 &0.0/8.0  & 8.0/15.0\\
H/MV &0.0638 & \textBF{0.0642} &1116 & 727 &3864/13382 & 30997/0 &175.7 & 14.9 &0.0/8.0  & 8.0/15.0\\
H/RJ &0.2171 & \textBF{0.2261} &547 & 788 &3091/4062 & 11747/0 &63.3 & 15.6 &0.0/8.0  & 8.0/15.0\\
JC/MV &\textBF{0.0627} & 0.0623 &1324 & 711 &12875/9727 & 17073/0 &55.3 & 5.9 &0.0/8.0  & 8.0/15.0\\
JC/O &0.0422 & \textBF{0.0428} &1650 & 1100 &4997/26612 & 35031/4500 &204.7 & 46.7 &0.0/8.0  & 8.0/15.0\\
MV/O &\textBF{0.0418} & 0.0366 &890 & 822 &6101/5289 & 31651/0 &57.0 & 13.3 &0.0/8.0  & 8.0/15.0\\
\cmidrule(l){2-11}  
AVG &0.1134 & \textBF{0.1146} &863 & 787 &7824/7364 & 20261/300 &78.0 & 15.0 &0.0/8.0 & 8.0/15.0  \\
     \midrule 
D0/D4 &1.2210 & \textBF{1.2316} &428 & 632 &2169/3599 & 6564/0 &20.8 & 4.8 &0.0/5.0  & 8.0/13.0\\
D2/D5 &1.0771 & \textBF{1.0861} &574 & 1083 &707/6040 & 7035/1642 &33.0 & 15.0 &0.0/5.0  & 8.0/13.0\\
D2/D7 &0.6955 & \textBF{0.7012} &272 & 538 &1890/2284 & 7972/0 &21.0 & 6.0 &0.0/5.0  & 7.0/12.0\\
D2/D9 &1.0570 & \textBF{1.0697} &502 & 721 &3027/4541 & 4172/0 &28.0 & 5.9 &0.0/5.0  & 8.0/13.0\\
\cmidrule(l){2-11}  
AVG &0.8861 & \textBF{0.8870} &386 & 783 &1504/2378 & 9551/36 &16.1 & 6.7 &0.0/5.0 & 6.7/11.7  \\
\bottomrule
\end{tabular}
\end{table}

The results over 20 runs on the real datasets \cref{data:shakespeare,data:mnist} are reported in 
\cref{table:shakespeare:mnist:ALM:short}.  We choose $\eta_1 = 20$ for \cref{data:shakespeare}  and \rev{$\eta_1 = 200$} for  \cref{data:mnist} and set $\gamma_{\eta} = 0.25$ for both datasets.   To save space, we only report instances where one method can return the 
value ``$\mathrm{PRW_p}$'' larger than 1.005 times of the smaller one of the two $\mathrm{PRW_p}$ values returned by the two methods. The better ``$\mathrm{PRW_p}$'' is marked in  \textBF{bold}.  Besides, the average performance over all instances  (15 instances in total for  \cref{data:shakespeare} and 45 instances in total for  \cref{data:mnist}) for each dataset is also kept in the ``AVG'' line.  From this table, we can see that updating the multiplier matrix also helps for the two real datasets. Compared with \ReALM~without updating the multiplier, \ReALM~with updating the multiplier can not only return better solutions but is about 5.2x faster for \cref{data:shakespeare} and is about
2.4x  faster for \cref{data:mnist}.

\section{Concluding remarks}\label{sec:conclusions}
In this paper, we considered the computation of the PRW distance arising from machine learning applications. By reformulating this problem as an optimization problem over the Cartesian product of the Stiefel manifold and the Euclidean space with additional nonlinear inequality constraints, we proposed a \ReALM~method.  The convergence of \ReALM~was also established.  To solve the subproblem in the \ReALM~method, we developed a framework of inexact Riemannian gradient descent methods. Also, we provided a practical \iRBBS~method with convergence and complexity guarantees, wherein the Riemannian BB stepsize and Sinkhorn iterations are employed. 
 Our numerical results showed that, compared with the state-of-the-art methods, our proposed \ReALM~and \iRBBS~methods both have advantages in solution quality and speed. Moreover, our proposed \ReALM~method can be also extended to solve more general Riemannian optimization with additional inequality constraints.

\bibliographystyle{siamplain}
\bibliography{bib-PRW}

\end{document}